\newtheorem{theorem}{Theorem}[section]
\newtheorem{lemma}[theorem]{Lemma}
\newtheorem{lem}[theorem]{Lemma}
\newtheorem{theo}[theorem]{Theorem}
\newtheorem{prop}[theorem]{Proposition}
\newtheorem{cor}[theorem]{Corollary}
\theoremstyle{definition}
\newtheorem{rem}[theorem]{Remark}
\newtheorem{defi}[theorem]{Definition}
\newcommand{\parti}{\mathrm{\tilde{\partial}}}
\newcommand{\cay}{\widetilde{X}}
\theoremstyle{definition}
\newtheorem{definition}[theorem]{Definition}
\theoremstyle{remark}
\numberwithin{equation}{section}
\title{Cubulating random groups in the square model}
\address{Institute of Mathematics, Polish Academy of Sciences,
Warsaw, {\'S}niadeckich 8}
\email{tomaszo@impan.pl}
\author{Tomasz Odrzyg{\'o}{\'z}d{\'z}}
\begin{document}

\begin{abstract}
Our main result is that for densities $<\frac{3}{10}$ a random group in the square model has the Haagerup property and is residually finite. Moreover, we generalize the Isoperimetric Inequality, to some class of non-planar diagrams and,  using this, we introduce a system of modified hypergraphs  providing the structure of a space with walls on the Cayley complex of a random group. Then we show that the natural action of a random group on this space with walls is proper, which gives the proper action of a random group on a CAT(0) cube complex. 
\end{abstract}

\maketitle

\section{Introduction}



In \cite{odrz} we introduced the square model  for random groups, where we draw at random relations of length four. The motivation was that the Cayley complex of such a group has a natural structure of a square complex, which should be easier to analyze than the polygonal Cayley complexes for groups in the Gromov model.

\begin{defi}\label{Definition:Square_model}[Square model, {\cite[Definition 1.3]{odrz}}]
Consider the set $A_n = \{a_1, \dots, a_n \}$, which we will refer to as an alphabet. Let $W_n$ be the set of all cyclically reduced words of length 4 over  $A_n$. Note that $|W_n| = (2n-1)^4$ up to a multiplicative constant. By $\mathbb{F}_n$ we will denote the free group generated by the elements of $A_n$. By \textit{relators} we will understand words over generators and by \textit{relations} the corresponding equalities holding in the group.

For $d \in (0,1)$ let us choose randomly, with uniform distribution, a subset $R_n \subset W_n$ such that $|R_n| = \lfloor (2n-1)^{4d} \rfloor$. Quotienting $\mathbb{F}_n$ by the normal closure of the set $R_n$, we obtain a \textit{random group in the square model at density d}.

We say that property $\mathcal{P}$ occurs in the square model at density $d$ \textit{with overwhelming probability} (w.o.p.) if the probability that a random group has property $\mathcal{P}$ converges to $1$ as $n \rightarrow \infty$.
\end{defi}
In \cite{odrz} we showed that a random group in the square model at density $d$ w.o.p.:
\begin{itemize}
\item is trivial for $d > \frac{1}{2}$,
\item is infinite, hyperbolic, torsion-free of geometric dimension 2 for $d < \frac{1}{2}$,
\item is free for $d < \frac{1}{4}$
\item does not have Property (T) for  $d < \frac{1}{3}$
\end{itemize}

In this paper we prove

\begin{theo}\label{Theorem:Cubulating}
For densities $d<\frac{3}{10}$ a random group in the square model w.o.p. acts properly and cocompactly on a CAT(0) cube complex.
\end{theo}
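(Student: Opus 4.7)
The plan is to follow the Sageev–Ollivier–Wise cubulation strategy: construct a wallspace structure on the Cayley complex $\widetilde{X}$ of the random group, show the natural action on this wallspace is proper (and has finitely many wall orbits), apply Sageev's construction to obtain a CAT(0) cube complex with a proper action, and then deduce cocompactness from the fact that the group is hyperbolic (which we already know for $d<\tfrac{1}{2}$ from \cite{odrz}) combined with having finitely many orbits of walls.

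First I would define hypergraphs in $\widetilde{X}$ in the usual dual fashion: in each square 2-cell, place a segment joining the midpoints of each pair of opposite edges, and take connected components of the union of these segments. In the square model, every square contributes two such hypergraph-arcs. These hypergraphs are the candidate walls. The well-known obstacle is that a naive hypergraph need not be embedded: it can self-intersect, or two pieces meeting at an edge might belong to different global hypergraphs that still bound a small diagram, and this breaks the wallspace structure or the properness of the action. I would therefore introduce \emph{modified hypergraphs}, in the spirit of Ollivier–Wise and Mackay–Przytycki, by declaring that when a short pathology (a small van Kampen-type diagram trapping a pair of hypergraph segments) occurs, we amalgamate or reroute so that what remains is a genuine tree transverse to the 1-skeleton.

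The technical heart of the argument is controlling these pathologies probabilistically, which is exactly where the threshold $d<\tfrac{3}{10}$ should appear. I would first prove a generalized isoperimetric inequality valid not just on planar reduced van Kampen diagrams but on the wider class of \emph{non-planar} diagrams that arise when several squares are glued along edges crossed by a common hypergraph (these diagrams have the topology of a thickened hypergraph segment and in general fail planarity, so the classical Gromov inequality does not apply directly). Using this inequality, I would show that with overwhelming probability, for $d<\tfrac{3}{10}$, any finite configuration of squares strung along a hypergraph has enough boundary relative to its area to rule out both self-intersections of hypergraphs and short accidental identifications. From this, properness follows in the standard way: a long geodesic from $1$ to $g$ in $\widetilde{X}$ must cross a linear number of distinct walls, so the combinatorial distance on the wallspace grows linearly with $|g|$; hence the stabilizer of any vertex of the resulting CAT(0) cube complex is finite.

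Cocompactness is then the comparatively soft part. Since there are only finitely many $G$-orbits of squares (hence of hypergraph germs), there are finitely many $G$-orbits of walls, and the random group is hyperbolic, so by the Sageev / Hruska–Wise machinery a proper action with finitely many orbits of hyperplanes on a finite-dimensional CAT(0) cube complex is automatically cocompact.

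The step I expect to be the main obstacle is the generalized isoperimetric inequality for non-planar diagrams, together with extracting the precise density bound $\tfrac{3}{10}$ from it: one must carefully enumerate the combinatorial shapes of hypergraph collars that could cause problems, bound the number of such shapes via the length-$4$ relator structure, and balance this against the probabilistic cost of the required relators appearing in $R_n$. Getting the exponents to close at exactly $d=\tfrac{3}{10}$, rather than at a smaller density, will require sharp counting rather than the crude bounds that suffice in the Gromov-model analogue.
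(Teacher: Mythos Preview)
Your outline is essentially the paper's strategy: generalized isoperimetric inequality for non-planar diagrams, modified hypergraphs giving a wallspace on $\widetilde X$, linear separation of the wall pseudometric from the word metric to get properness, and cocompactness from hyperbolicity plus finitely many wall orbits. The density $\tfrac{3}{10}$ indeed emerges from applying the generalized inequality to the small collared/2-collared configurations that obstruct separation.

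The one substantive divergence is in how the walls are modified. You describe a single system of rerouted or amalgamated hypergraphs. The paper instead keeps the standard hypergraphs and introduces two \emph{additional} parallel systems, ``red'' and ``blue'', obtained by first painting the rare pairs of strongly adjacent $2$-cells (those sharing two edges) in two colours and then, in a cell of a given colour, bending the hypergraph to connect a distinguished edge to its non-opposite neighbour rather than to its antipode. The point is that the obstruction to linear separation by standard hypergraphs (Figure~\ref{Figure:Problem_with_standard}) is localized exactly at these strongly adjacent pairs; the red and blue families are designed so that for any edge of a geodesic at least one of the three hypergraphs through it behaves well. The properness argument (Theorem~\ref{Theorem:Proper_action}) then shows that among any $15$ consecutive edges of a geodesic, some hypergraph in one of the three systems crosses $\gamma$ exactly once. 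This three-system trick is what makes the case analysis close at $d<\tfrac{3}{10}$; a single modified system in the style you sketch could in principle work, but you would need to specify the rerouting precisely and redo the collared-diagram analysis, and it is not clear you recover the same threshold. Your cocompactness argument via Hruska--Wise is more explicit than the paper's, which simply invokes \cite{walls} after establishing properness.
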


\begin{cor}\label{Corollary:Cubulating}
For densities $d<\frac{3}{10}$ a random groups in the square model w.o.p. has the Haagerup property and is residually finite.
\end{cor}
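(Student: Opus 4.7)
The plan is to derive Corollary \ref{Corollary:Cubulating} as a direct consequence of Theorem \ref{Theorem:Cubulating} combined with two deep but well-established black-box theorems in geometric group theory. Since Theorem \ref{Theorem:Cubulating} already supplies a proper cocompact action on a CAT(0) cube complex, the corollary requires essentially no additional geometric work about the square model itself — the entire effort is packaged into quoting the right theorems, so my proof will consist of two short paragraphs, one for each property.

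For the Haagerup property, I would invoke the theorem of Niblo--Roller (also obtained by Cherix--Martin--Valette) which says that any group admitting a proper action (not even cocompactness is needed) on a CAT(0) cube complex is a-T-menable. Since Theorem \ref{Theorem:Cubulating} produces precisely such an action, the Haagerup property is immediate.

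For residual finiteness, I would combine Theorem \ref{Theorem:Cubulating} with the fact, already established in \cite{odrz} and recalled in the introduction, that for $d<\frac{1}{2}$ a random group in the square model is w.o.p. hyperbolic. Thus we have w.o.p. a hyperbolic group acting properly and cocompactly on a CAT(0) cube complex. Agol's theorem (the resolution of Wise's conjecture, using the machinery of Haglund--Wise on special cube complexes) then implies that the group is virtually compact special, hence embeds as a subgroup of a right-angled Artin group, which is linear and therefore residually finite. Residual finiteness passes to finite-index overgroups by a standard argument, giving the conclusion.

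The main conceptual work, of course, is all in Theorem \ref{Theorem:Cubulating}; the only obstacle in the corollary is making sure that the hypotheses of Agol's and Niblo--Roller's theorems are met exactly as stated, which they are since our action is proper and cocompact and the group is hyperbolic. No new probabilistic estimates or diagrammatic arguments are required at this stage.
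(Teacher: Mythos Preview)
Your proposal is correct and follows essentially the same approach as the paper: the paper's proof is a two-line citation invoking \cite{cmv} for the Haagerup property and \cite[Corollary 1.2]{agol} for residual finiteness, which is exactly the Niblo--Roller/Cherix--Martin--Valette result and Agol's theorem that you spell out. Your version is simply more explicit about why Agol's theorem applies (namely, that hyperbolicity is needed and is supplied by \cite{odrz} for $d<\tfrac{1}{2}$), which the paper leaves implicit.
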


\begin{proof}
The residual finiteness results from \cite[Corollary 1.2]{agol} and the Haagerup Property by a folklore remark (see for example  \cite{cmv}).
\end{proof}

To obtain Theorem \ref{Theorem:Cubulating} we needed to generalize the \textit{isoperimetric inequality} (\cite[Theorem 2]{some}) to a class of non-planar diagrams (see Theorem \ref{Theorem:Generalized_Isoperimetric_Inequality}). This generalization has already proved to be useful in other work on random groups (see for example \cite{przyt}). Theorem \ref{Theorem:Generalized_Isoperimetric_Inequality} is stated in the regime of the square model, but the reasoning can be easily repeated in the Gromov model (see \cite{notka} for some version of it). The main idea of the proof of Theorem \ref{Theorem:Cubulating} is based on introducing modified hypergraps, which provide a structure of a space with walls on the Cayley complex of a random group, that has desired metric properties. This modified walls allowed us to construct a proper action of a random group on a space with walls, which was impossible to obtain using standard hypergraphs (introduced in \cite{ow11} for the Gromov model and in \cite{odrz} for the square model).

\subsection*{Organization}The paper is organized as follows: first we prove a generalization of the isoperimetric inequality, then using this we introduce ``corrected'' hypergraphs that provide the structure of a space with walls on the Cayley complex of a random group. Finally we show that a random group acts properly on this space with walls. The idea of correcting
 hypergraphs was inspired by \cite{przyt}.

\subsection*{Acknowledgements}The author would like to thank Piotr Przytycki for many discussions and Piotr Nowak for general advice on writing.

\section{Non-planar isoperimetric inequality}

The goal of this section is to generalize the isoperimetric inequality to some class of
non-planar complexes, but having ``disc-like” structure, meaning that they are unions of a large disc component and some bounded non-planar pieces. Firstly, let us recall the isoperimetric inequality:

\begin{theo}[{\cite[Theorem 2]{some}}]\label{Theorem:Isoperimetric_inequality} For any $\varepsilon > 0$, in the Gromov model at density $d < \frac{1}{2}$, with overwhelming probability all reduced van Kampen diagrams associated to the group presentation satisfy

\begin{equation} \label{Equation:Isoperimetric_inequality}
|\partial D| \geq l(1 - 2d -  \varepsilon) |D|.
\end{equation}
Here $\partial D$ denotes the set of boundary edges of the diagram $D$ and $|D|$ denotes the number of 2-cells of $D$.
\end{theo}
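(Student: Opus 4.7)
The plan is a first-moment argument via Markov's inequality. For each $m\geq 1$, let $N_m$ denote the number of reduced van Kampen diagrams $D$ with $|D|=m$ violating \eqref{Equation:Isoperimetric_inequality}. I would show that $\sum_{m\geq 1}\mathbb{E}[N_m]\to 0$ as $n\to\infty$, so that with overwhelming probability no bad diagram exists. The Euler identity $lm=|\partial D|+2E(D)$, with $E(D)$ the number of interior edges of $D$, converts the bad event into $E(D)>l(d+\varepsilon/2)m$, reducing the task to a tail bound on the number of interior edges.

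To control $\mathbb{E}[N_m]$, I would split the count by combinatorial shape, edge-labeling, and the probability that the induced face-words belong to $R_n$. A planar-tree encoding of the dual graph bounds the number of simply connected planar $l$-gonal shapes on $m$ cells by $C^m$ for a constant $C=C(l)$. For a fixed shape with $E$ interior edges there are at most $(2n)^{lm-E}$ edge-labelings. If each labeling produced $m$ pairwise distinct face-words, the probability that they all lie in $R_n$ would be at most $(|R_n|/|W_n|)^m=(2n-1)^{-l(1-d)m}$; combining the three factors and substituting $E>l(d+\varepsilon/2)m$ yields a bound of the form $(C')^m(2n)^{-l\varepsilon m/2}$, which for $n$ large is $\leq \rho^m$ with $\rho<1$ and thus sums to a vanishing total.

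The serious obstacle is that in a diagram with many cells relators frequently repeat: when only $k<m$ distinct face-words appear, the probability that they all lie in $R_n$ is $(|R_n|/|W_n|)^k$, \emph{larger} than $(|R_n|/|W_n|)^m$. The argument must therefore partition the count by the number of distinct relators and show that labelings producing few distinct face-words are themselves rarer by the right compensating factor. This is the combinatorial heart of the proof of Theorem 2 in \cite{some}: each repetition of a relator forces matchings of edge-labels, reducing the effective degrees of freedom in the edge-labeling count by precisely the amount needed. Once this refined accounting is carried out, the geometric decay $\mathbb{E}[N_m]\leq\rho^m$ is reinstated and the theorem follows.
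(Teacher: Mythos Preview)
The paper does not prove Theorem~\ref{Theorem:Isoperimetric_inequality} itself---it is quoted from \cite{some}---but the paper's proof of the generalized version (Theorem~\ref{Theorem:Generalized_Isoperimetric_Inequality}) follows Ollivier's argument closely, so one can read off the structure of the original proof from Lemma~\ref{Lemma:Local_generalized_isoperimetric}, Proposition~\ref{Proposition:Fullfill}, Lemma~\ref{cut}, and Propositions~\ref{p1}--\ref{p2}.

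Your first-moment strategy and the distinct-relator computation are fine, but the final claim is where the gap lies. You assert that once Ollivier's ``refined accounting'' for repeated relators is carried out, the geometric decay $\mathbb{E}[N_m]\le\rho^m$ is \emph{reinstated}. It is not. The accounting you are alluding to is exactly Proposition~\ref{Proposition:Fullfill} (with Lemma~\ref{lem:59}): for a fixed abstract complex $Y$ violating the inequality, the probability of fulfillment is bounded by $(2n-1)^{\,l d-\mathrm{Cancel}(Y)/|Y|}\le(2n-1)^{-l\varepsilon/2}$, and the exponent does \emph{not} carry a factor of $|Y|$. The Abel-summation step that handles the multiplicities $m_i$ is precisely what collapses the $|Y|$-dependence. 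Since the number of abstract complexes with $m$ faces grows like $C^m$, summing this bound over all $m$ diverges, and your series $\sum_m\mathbb{E}[N_m]$ is not controlled.

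Ollivier's actual proof---and the paper's generalization---therefore proceeds differently. One first proves the inequality only for diagrams with $|Y|\le K$ (Lemma~\ref{Lemma:Local_generalized_isoperimetric}): for fixed $K$ there are only boundedly many abstract complexes, uniformly in the number of generators, so a union bound over this finite set suffices. The passage to arbitrary diagrams is a separate \emph{geometric} local-to-global argument (Lemma~\ref{cut}, Propositions~\ref{p1}--\ref{p2}): any large van~Kampen diagram can be cut along a short path into two pieces each carrying a definite fraction of the boundary, and iterating this shows that the linear isoperimetric inequality for bounded diagrams propagates, with a controlled loss in $\varepsilon$, to all diagrams. Your proposal is missing this second step entirely, and the first-moment route you sketch cannot replace it without a genuinely new estimate.
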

One of the corollaries of Theorem \ref{Theorem:Isoperimetric_inequality} is that in the Gromov density model at densities $<\frac{1}{2}$ a random group is w.o.p. hyperbolic (see \cite[Theorem 1]{shape} and \cite{gro93}). In \cite[Theorem 3.15]{odrz} the author proved that Equation (\ref{Equation:Isoperimetric_inequality}) is satisfied w.o.p. as well for the square model of random groups.

Let us now introduce some definitions.

\begin{defi}
Suppose $Y$ is a finite $2$--complex, not necessarily a disc diagram.
\begin{itemize}
\item The \textit{generalized boundary length} of $Y$, denoted $|\tilde{\partial} Y|$, is $$ |\tilde{\partial} Y| := \sum_{e\in Y^{(1)}} (2 - \deg(e)),$$
\item and the \textit{cancellation} in $Y$ is
 $$\mathrm{Cancel}(Y) := \sum_{e\in Y^{(1)}} (\deg(e)-1),$$
\end{itemize}
where $\mathrm{deg(e)}$ is the number of times that $e$ appears as the image of an edge of the attaching map of a $2$–cell of $Y$.
\end{defi}

Note, that for every planar diagram $Y$ we have $|\tilde{\partial} Y| = |\partial Y|$. Moreover, $\mathrm{Cancel}(Y)=\frac{1}{2}(4|Y|-|\partial Y|)$. 

\begin{defi}
We say that a finite 2-complex $Y$ is \emph{fulfilled} by a set of relators $R$ if there is a combinatorial map from~$Y$ to the presentation complex $\cay /G$ that is locally injective around edges (but not necessarily around vertices).
\end{defi}

In particular, any subcomplex of the Cayley complex $\cay$ is fulfilled by~$R$.

\begin{defi}[Diagram with $K$-small legs]\label{Definition:Diagram_with_small_legs}
Let $G = \left< S|R \right>$ be a finite group presentation where $R$ consists of cyclically reduced words of length 4 over  $S$. For $K > 0$ let $Y$ be a
2–complex that is a union of a reduced Van Kampen diagram $Z$ and a family $Z_i$ of connected complexes, called \textit{legs of $Y$}, such that $|Z_i| \leq K$, for each $i$ the leg $Z_i$ contains an external vertex of $Z$ and that every edge of $Y$ belongs to maximally two legs. We call such $Y$ a \textit{diagram with $K$-small legs (with
respect to the presentation $\left< S|R \right>$)}. The disc diagram $Z$ is called the \textit{disc basis of $Y$}.
\end{defi}

Until the end of this section $G$ will denote the random group in the square model with the presentation $\left<S | R \right>$  and $\cay$ --- the Cayley complex of $G$ with respect to this presentation.  

\begin{theo}[Generalized Isoperimetric Inequality]\label{Theorem:Generalized_Isoperimetric_Inequality}
In the square model at density $d \leq \frac{1}{2}$ the following statement holds w.o.p.: for each $K$ and $\varepsilon > 0$ there is no diagram $Y$ with $K$-small legs fullfilable by $R$ and satisfying:

\begin{equation}
\emph{Cancel}(Y) > 4(d + \varepsilon)|Y|,
\end{equation}
or equivalently
\begin{equation}
|\tilde{\partial} Y| > 4(1 - 2d - \varepsilon)|Y|.
\end{equation}
\end{theo}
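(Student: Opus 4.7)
The plan is to adapt the Ollivier-style probabilistic counting argument used for Theorem \ref{Theorem:Isoperimetric_inequality} (and reproved for the square model in \cite{odrz}) to the non-planar setting. Concretely, I want to show that, as $n\to\infty$, the expected number of abstract combinatorial $2$-complexes $Y$ with $K$-small legs that are fulfillable by $R$ and that violate the cancellation bound tends to $0$; Markov's inequality then gives the w.o.p.\ nonexistence. Two quantitative ingredients are needed: a combinatorial upper bound on the number of such abstract $Y$ with $|Y|=m$, and a probabilistic upper bound on the chance that a fixed such $Y$ is fulfillable.

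For the combinatorial count, I would first argue that the number of isomorphism classes of abstract $2$-complexes $Y$ with $m$ $2$-cells of length $4$ and $K$-small legs is at most $C(K)^m$ for some constant depending only on $K$. The disc basis $Z$ contributes at most $C_1^{|Z|}$ choices via the classical planar enumeration. Each leg has at most $K$ two-cells, so it lies in a finite list of abstract ``leg types'' (of size depending only on $K$). The number of legs is bounded by the number of external vertices of $Z$, which is at most $|\partial Z|\leq 4|Z|\leq 4m$, and for each leg we must choose an attachment to an external vertex of $Z$, giving an extra polynomial (hence absorbable) factor per leg. Putting these together yields the desired $C(K)^m$ bound, uniformly in $n$.

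For the probabilistic bound, fix an abstract $Y$ with $m$ two-cells and $E$ edges in $Y^{(1)}$. Note the identity $E = 4m - \mathrm{Cancel}(Y)$. A fulfillment of $Y$ by $R$ is an assignment of generators to the edges such that every $2$-cell reads a word in $R$. The number of edge labelings is at most $(2n)^E$, and each $2$-cell independently (up to boundary effects already captured in $E$) contributes a constraint of probability at most $|R|/|W_n|\asymp (2n-1)^{4d-4}$, so after the standard passage between the random subset and Bernoulli models (as in \cite{some}, \cite{odrz}), the expected number of fulfillments of $Y$ is at most
\begin{equation*}
C_2^m\cdot (2n)^{E} \cdot (2n)^{(4d-4)m} \;=\; C_2^m\cdot (2n)^{4dm - \mathrm{Cancel}(Y)}.
\end{equation*}
Under the hypothesis $\mathrm{Cancel}(Y)>4(d+\varepsilon)m$, this is at most $C_2^m\,(2n)^{-4\varepsilon m}$.

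Combining the two estimates, the expected number of fulfillable diagrams $Y$ with $K$-small legs, $|Y|=m$, violating the bound is at most $\bigl(C(K)C_2\bigr)^{m}(2n)^{-4\varepsilon m}$. For $n$ large this is $\leq \lambda^m$ with $\lambda<1$, so summing over $m\geq 1$ gives a quantity tending to $0$, and Markov plus a union bound finishes the proof. The equivalent formulation in terms of $|\tilde\partial Y|$ follows from the identity $|\tilde\partial Y|=2E-4|Y|$. The main obstacle I expect is step one: the enumeration of how several legs may be glued non-planarly at (possibly coinciding) external vertices of $Z$ and the accounting for edges shared between two legs, so that the count remains of the form $C(K)^m$ rather than something that grows with $n$. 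Once this is in place, the probabilistic part is a direct reworking of the argument in \cite{some} carried out in the square model as in \cite[Theorem 3.15]{odrz}.
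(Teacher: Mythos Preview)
Your step two contains the real gap, not step one. The claim that ``each $2$-cell independently contributes a constraint of probability at most $|R|/|W_n|$'' is false: distinct faces of $Y$ may read the \emph{same} relator under a given edge labelling, and then the corresponding events coincide rather than multiply. Ollivier's correct treatment of this (reproved here as Proposition~\ref{Proposition:Fullfill} via the multiplicities $m_i$ and an Abel summation) yields only
\[
\mathbb{P}(Y\text{ fulfillable})\;\le\;(2n-1)^{\,4d-\mathrm{Cancel}(Y)/|Y|}\;\le\;(2n-1)^{-4\varepsilon},
\]
with exponent \emph{independent of} $|Y|$. A concrete obstruction to your stronger bound: take a $1\times m$ planar strip of squares. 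It has $\mathrm{Cancel}=m-1$, so for $d<\tfrac14-\varepsilon$ and $m$ large it violates the inequality; yet it is fulfillable whenever $R$ contains a single relator of the form $abab$, an event of probability $\asymp(2n)^{4d-2}$ that does not decay with $m$. Summing your expected-fulfillment bound, or even the correct probability bound, over all $m$ therefore diverges. (The events for different $m$ are in fact highly positively correlated, so no amount of sharpening of the union bound rescues the scheme.)

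The paper proceeds quite differently, and the ``disc plus $K$-small legs'' hypothesis is exploited geometrically rather than merely in a count. First a \emph{local} version (Lemma~\ref{Lemma:Local_generalized_isoperimetric}) is proved for $|Y|\le K$, where the number of abstract complexes is bounded uniformly in $n$ and the weak bound $(2n-1)^{-4\varepsilon}$ suffices. The passage to arbitrary $|Y|$ then uses the already-established \emph{planar} isoperimetric inequality (Theorem~\ref{Theorem:Isoperimetric_inequality}) as input: by Ollivier's cutting lemma (Lemma~\ref{cut}), the disc basis $Z$ can be split along a path of length $O(\log|Z|)$ into two pieces each carrying a definite fraction of $\partial Z$, and the $K$-small legs contribute only an $O(K)$ error per cut. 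Iterating (Propositions~\ref{p1} and~\ref{p2}) reduces the general statement to the bounded case with a geometric loss in $\varepsilon$. In particular the strip example above is handled not by counting but because it is planar and thus already covered by Theorem~\ref{Theorem:Isoperimetric_inequality}; your direct summation has no mechanism to invoke that fact.
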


Our strategy to prove Theorem \ref{Theorem:Generalized_Isoperimetric_Inequality} is to first prove a ``local'' version of it, that is with the additional limit on the number of 2-cells in a diagram, and then to show that this locality assmuption can be ommited. Analogous results for the Gromov model  was obtained by the author in \cite{notka} and used in \cite{przyt} to construct balanced walls. We thank Piotr Przytycki for suggesting the statement of Theorem \ref{Theorem:Generalized_Isoperimetric_Inequality} involving the notion of cancellation.

\subsection{Local version of the generalized isoperimetric inequality}

\begin{lem}\label{Lemma:Local_generalized_isoperimetric}
In the square model at density $d \leq \frac{1}{2}$, for each $K, \varepsilon >0$ w.o.p. there is no 2-complex $Y$ with $|Y| \leq K$ fulfilled by $R$ and satisfying
\begin{equation}
\emph{Cancel}(Y) > 4(d + \varepsilon)|Y|.
\end{equation}
\end{lem}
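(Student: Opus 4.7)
The plan is a first-moment (union bound) argument over abstract 2--complexes, analogous to the proof of the standard isoperimetric inequality for planar diagrams but with no planarity required because the hypothesis $|Y|\le K$ bounds the combinatorial complexity in advance.

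\textbf{Step 1 (Enumeration of shapes).} Fix $k\le K$. I would first bound the number of isomorphism classes of abstract 2--complexes $Y$ built from $k$ square 2--cells, irrespective of any labeling. Since each 2--cell has 4 edges and 4 vertices, the total number of edge and vertex identifications is at most $O(k^2)$, so the number of such abstract shapes is bounded by a constant $C_K$ depending only on $K$, crucially independent of $n$. Group these shapes further by their cancellation $C:=\mathrm{Cancel}(Y)=4k-|Y^{(1)}|$.

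\textbf{Step 2 (Probability of fulfillment, fixed shape).} For a fixed abstract $Y$ with $k$ faces and cancellation $C$, a fulfillment by $R$ is determined, up to a bounded combinatorial choice per 2--cell (orientation and starting edge, contributing a factor $8^k$), by a labeling of the $4k-C$ edges of $Y$ by elements of $S\cup S^{-1}$. The number of such labelings is at most $(2n)^{4k-C}$. Each labeling prescribes the boundary word of each of the $k$ 2--cells; $Y$ is fulfilled precisely when every such word lies in (a cyclic/inverse conjugate of) $R$. Since $|R|=\lfloor (2n-1)^{4d}\rfloor$ and $|W_n|\asymp (2n-1)^4$, the probability that a given cyclically reduced length--4 word lies in $R$ is $O(n^{4d-4})$, and for bounded $k$ the sampling-without-replacement correction is negligible, so
\begin{equation*}
\mathbb{P}\bigl[Y \text{ is fulfilled by } R\bigr]\;\le\; C'_K \cdot n^{4k-C}\cdot n^{(4d-4)k}\;=\;C'_K\cdot n^{4dk-C}.
\end{equation*}

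\textbf{Step 3 (Conclusion via Markov).} Assuming $\mathrm{Cancel}(Y)>4(d+\varepsilon)k$, we get $4dk-C<-4\varepsilon k$, hence the expected number of fulfilled $Y$ of a given shape is $O(n^{-4\varepsilon k})$. Summing the expected count over the (at most $C_K$) abstract shapes and over $k=1,\dots,K$ gives a total of order $K\cdot C_K\cdot n^{-4\varepsilon}\to 0$. By Markov's inequality, w.o.p.\ no such $Y$ exists, which is exactly the statement of the lemma.

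\textbf{Main obstacle.} The only genuinely delicate point is the probability bound in Step 2: the random set $R$ is sampled without replacement, and a labeling of $Y$ may prescribe fewer than $k$ distinct length--4 words (when several 2--cells carry the same boundary word). One must check that these correlations only help, or at worst change constants, so that the clean bound $n^{4dk-C}$ still holds; this is where I would invoke the standard Ollivier--Wise type estimate already used in \cite{odrz} for planar diagrams, whose proof needs no planarity and extends verbatim. Once this estimate is in hand, the rest is a bookkeeping exercise.
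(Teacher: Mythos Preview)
Your outline matches the paper's: enumerate the finitely many combinatorial types with at most $K$ faces (a bound independent of $n$), bound the fulfillment probability of each, and sum. The paper carries this out via Proposition~\ref{Proposition:Fullfill}, whose proof (Lemma~\ref{lem:59} together with an Abel-summation step) is precisely the Ollivier-type estimate you allude to at the end, and indeed makes no use of planarity.

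One correction to your diagnosis of the obstacle. The displayed bound in Step~2 fails when several faces carry the same boundary word, but not for the reason you suggest: if only $j<k$ distinct words appear among $w_1,\dots,w_k$, the probability that all of them lie in $R$ is of order $n^{(4d-4)j}$, which is \emph{larger} than your $n^{(4d-4)k}$ since $4d-4<0$. So repetitions make each term worse, not better; what compensates is that edge-labelings forcing such coincidences are themselves much rarer, and this trade-off is invisible in a raw count of edge-labelings. The paper resolves this by stratifying in advance by which faces bear the same relator---this is exactly the extra data carried by an \emph{abstract $2$-complex} (Definition~\ref{Definition:Abstract_2_complex})---and then running the union bound over ordered tuples of relators in $R$ rather than over edge-labelings; the two effects are balanced via the quantities $\kappa_i$ and the Abel summation in the proof of Proposition~\ref{Proposition:Fullfill}. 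Your instinct to invoke the standard estimate is correct, but be aware that this estimate is the substantive content of the lemma rather than an auxiliary detail.
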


Our proof of Lemma \ref{Lemma:Local_generalized_isoperimetric} will be only a slight modification of Olliver's proof of Theorem \ref{Theorem:Isoperimetric_inequality}. We postpone it to to introduce some necessary tools first.

\begin{defi}\label{Definition:Abstract_2_complex}
Let $Y$ be a $2$-complex. Suppose that the following additional information is given:
\begin{enumerate}
\item Every face of $Y$ is labeled by a number $i \in \{1, 2, \dots, n \}$. The labels can repeat and for each number $1 \leq i \leq n$ there is at least one face labeled with $i$. We call $n$ the \textit{number of distinct relators}.
\item For each face there is a distinguished edge, called \textit{the starting edge} and an orientation.
\end{enumerate}
Such $Y$ will be referred to as an \textit{abstract $2$-complex}. We say that $Y$ is \textit{fulfilled} by a sequence of relators $(r_1, r_2, \dots, r_n )$ if it is fulfilled by $\{r_1, r_2, \dots, r_n \}$ as $2$-complex, in such a way that each face $f$ labelled by $i \in \{1,2,\ldots,n\}$ (according to (1) in Definition \ref{Definition:Abstract_2_complex}) is sent under the combinatorial map to the face of $X/G$ corresponding to $r_i$, so that the image in $X^{(1)}/G$ of the attaching path of $f$, starting at the oriented starting edge (according to $(2)$ in Definition \ref{Definition:Abstract_2_complex}) reads off the word $r_i$. If $f$ is labeled by $i$ we say that $f$ bears the relator $i$.
 \end{defi}

Recall that $S$ denotes the cardinality of the generating set of a random group in the square model. Let $m$ denote the cardinality of the set $S$.

\begin{prop}\label{Proposition:Fullfill}
Let $R$ be a random set of relators at density $d$ and at length $4$ on $m$ generators. Let $Y$ be an abstract $2$-complex. Then either $\mathrm{Cancel}(Y) < 4(d+2\varepsilon)|Y|$ or the probability that there exists a tuple of relators in $R$ fulfilling $Y$ is less than $(2m-1)^{- 4\varepsilon}$.
\end{prop}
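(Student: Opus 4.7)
The argument is a direct union bound in the spirit of Olliver's proof of Theorem \ref{Theorem:Isoperimetric_inequality}. Write $W$ for the set of cyclically reduced length-$4$ words on $S\cup S^{-1}$, so that $|W|\asymp(2m-1)^4$ and $|R|=\lfloor(2m-1)^{4d}\rfloor$. Since $R$ is uniformly chosen among all $|R|$-element subsets of $W$, the probability that a fixed $n$-tuple of distinct words lies in $R^n$ is bounded by
\[
\prod_{i=0}^{n-1}\frac{|R|-i}{|W|-i}
\;\leq\; \left(\frac{|R|}{|W|}\right)^{\!n}
\;\leq\; (2m-1)^{-4(1-d)n}.
\]

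Next I would count the tuples $(r_1,\dots,r_n)\in W^n$ that fulfill $Y$. A fulfillment corresponds to a consistent assignment of a generator from $S\cup S^{-1}$ to each edge of $Y$ such that every face, read from its starting edge in the prescribed orientation, spells a word in $W$, and any two faces sharing a label spell the same word. A convenient way to organise this is through the bipartite graph $B$ with vertex set $\{(i,p):1\leq i\leq n,\,1\leq p\leq 4\}\sqcup Y^{(1)}$ and an edge joining $(i,p)$ to $e$ whenever some face labeled~$i$ has position $p$ on edge~$e$. Local injectivity around edges (part of the definition of ``fulfilled'') forces $B$ to have exactly $4|Y|$ edges, so by Euler's formula its number of components equals $N = 4n - \mathrm{Cancel}(Y) + \beta_1(B)$, with $\beta_1(B)\geq 0$. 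Up to orientation data carried by the cycles of $B$, a fulfilling tuple corresponds to picking one generator per component, so the number of fulfilling tuples is bounded by $C\cdot(2m-1)^{N}$ for some $C=C(m)$.

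Combining via the union bound,
\[
\Pr\bigl[\,\exists\ \text{tuple in }R^n\text{ fulfilling }Y\,\bigr]
\;\leq\; C\cdot (2m-1)^{N-4(1-d)n}.
\]
I would then substitute the bound $N\leq|Y^{(1)}|=4|Y|-\mathrm{Cancel}(Y)$ together with the hypothesis $\mathrm{Cancel}(Y)\geq 4(d+2\varepsilon)|Y|$, which, after using $n\leq|Y|$, reduces the exponent to $4(1-d)(|Y|-n)-8\varepsilon|Y|$, and this is at most $-4\varepsilon$ in the principal case $n=|Y|$.

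The main obstacle lies in the case when several faces of $Y$ share a label, so that $n$ is substantially smaller than $|Y|$: there the previous bound $N\leq|Y^{(1)}|$ is tight but leaves a $4(1-d)(|Y|-n)$ discrepancy on the wrong side of $-4\varepsilon$. Handling this requires exploiting the local injectivity constraint more carefully---concretely, showing that for $n<|Y|$ the cycles of $B$ either impose orientation-consistency conditions that cut the fulfilling-tuple count below the naive $(2m-1)^{|Y^{(1)}|}$, or force $\beta_1(B)$ to be small enough that the refined inequality $N\leq 4(1-d)n-4\varepsilon$ is retained. This cycle/orientation analysis, which is precisely where the constant $2$ in the threshold $4(d+2\varepsilon)|Y|$ enters, is the technical heart of the argument.
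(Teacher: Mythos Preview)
Your argument is fine when $n=|Y|$, but the case $n<|Y|$ is not a minor technicality: it is exactly where the proposition has content, and you have not actually dealt with it. Your own computation shows the exponent becomes $4(1-d)(|Y|-n)-8\varepsilon|Y|$, which for small $\varepsilon$ is positive whenever several faces share a label. The suggested remedy---that cycles in $B$ impose orientation constraints, or that $\beta_1(B)$ is forced small---does not work as stated. Orientation constraints cost at most a bounded factor per cycle, not a factor of $(2m-1)$, so they cannot close a gap of order $(1-d)(|Y|-n)$ in the exponent; and there is no mechanism forcing $\beta_1(B)$ to be small in general. What is really happening is that when a label $i$ is borne by $m_i>1$ faces, each position vertex $(i,p)$ has degree $m_i$ in $B$, so the number of components containing an $(i,p)$-vertex drops well below $|Y^{(1)}|$. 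Your bound $N\leq|Y^{(1)}|$ throws this away.

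The paper does not try to salvage a one-shot union bound. Instead it conditions sequentially: order the labels so that the multiplicities satisfy $m_1\geq m_2\geq\cdots\geq m_n$, let $p_i$ be the probability that $i$ random words partially fulfill $Y$, and prove $p_i/p_{i-1}\leq(2m-1)^{-\kappa_i}$, where $\kappa_i=\max\{\delta(f):f\text{ labelled }i\}$ and $\delta(f)$ counts the edges of $f$ already constrained by earlier or equal labels. Combined with $\mathrm{Cancel}(Y)\leq\sum_i m_i\kappa_i$ and the trivial bound $P_i\leq|R|^ip_i$, an Abel summation in the $m_i$ yields $\mathrm{Cancel}(Y)\leq|Y|\bigl(4d-\log_{2m-1}\min_iP_i\bigr)$, which is precisely the claim. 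The point is that the multiplicities $m_i$ enter through the summation-by-parts weights, and the monotonicity $m_{i+1}\leq m_i$ is what makes the telescoping go the right way. This is the missing idea; your bipartite-graph framework does not see it.
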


Before we prove this proposition we introduce some additional notation. Let $n$ be the number of distinct relators in an abstract 2-complex $Y$. For $1 \leq i \leq n$ let $m_i$ be the number of times relator $i$ appears in $Y$. Up to reordering the relators we can suppose that $m_1 \geq m_2 \geq \dots \geq m_n$.

For $1 \leq i_1, i_2 \leq n$ and $1 \leq k_1, k_2 \leq l$ we say that $(i_1, k_1) > (i_2, k_2)$ if $i_1 > i_2$ or $i_1 = i_2$ but $k_1 > k_2$ (lexicographic order). Suppose that for some $s \geq 2$ an edge $e$ of $Y$ is adjacent to faces $f_1, f_2, \dots, f_s$ labeled by $i_1, i_2, \dots, i_s$ accordingly. Suppose moreover that for $1 \leq j \leq s$ the edge $e$ is the $k_j$--th edge of the face $f_j$. Since $Y \rightarrow X/G$ is locally injective around $e$, the pairs $(i_j,k_j)$ are distinct. Choose $j=j_{min}$ for which $(i_j,k_j)$ is minimal. We say that edge $e$ \textit{belongs to faces} $f_j$ for $j \in \{1, 2, \dots s\} \setminus \{j_{min}\}$.

Let $\delta(f)$ be the number of edges belonging to a face $f$. For $1 \leq i \leq n$ let

$$\kappa_i = \max \{ \delta(f) : f \text{ is a face labeled by relator i} \}$$

Note that

\begin{equation}\label{eq:2}
\mathrm{Cancel}(Y) = \sum_{f \in Y^{(2)}} \delta(f) \leq  \sum_{1 \leq i \leq n} m_i \kappa_i
\end{equation}

\begin{defi}
Let $Y$ be an abstract $2$-complex with $n$ distinct relators. For $1 \leq k \leq n$ let $R' = (w_1, w_2, \dots, w_k)$ be a sequence of relators.  We say that $Y$ is \textit{partially fulfilled by} $R'$ if the abstract $2$-complex $Y' \subset Y$ that is the closure of the faces of $Y$ labeled by the numbers $i \in \{1,2,...,k\}$ is fulfilled by $R'$.
\end{defi}

\begin{lem}\label{lem:59}
For $1 \leq i \leq n$ let $p_i$ be the probability that $i$ randomly chosen words $w_1, w_2, \dots, w_i$ partially fulfill $Y$ and let $p_0 = 1$. Then
\begin{equation}\label{eq:3}
\frac{p_i}{p_{i-1}} \leq (2m-1)^{-\kappa_i}.
\end{equation}
\end{lem}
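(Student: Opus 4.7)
The plan is to condition on the event that $w_1,\dots,w_{i-1}$ partially fulfill $Y$ and then bound the conditional probability that a fresh random word $w_i$ extends this to a partial fulfillment by $w_1,\dots,w_i$, since by definition this ratio equals $p_i/p_{i-1}$.

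To get that bound, I would select a single face $f^*$ of $Y$ labelled by the relator $i$ with $\delta(f^*)=\kappa_i$, and ignore constraints coming from every other face labelled $i$ (which only makes the probability larger, so the bound is still valid). Once $f^*$ is assigned the word $w_i$, each of its four edges is identified with a specific position $k$ in $w_i$ (up to orientation), so the four letters of $w_i$ are in bijection with the labels of the four edges of $f^*$ in $X^{(1)}/G$. For each edge $e$ belonging to $f^*$, the definition of ``belongs'' provides an adjacent face $f'$ with smaller lexicographic pair $(i',k')$. Two cases arise: (a) $i'<i$, in which case $f'$ already carries the fixed relator $w_{i'}$ in the partial fulfillment by $w_1,\dots,w_{i-1}$, so the edge $e$ is already labelled by a fixed generator and the corresponding letter of $w_i$ is constrained to equal that generator (up to sign); (b) $i'=i$ with $k'$ less than the position of $e$ on $f^*$, in which case $f'$ also carries $w_i$, and the constraint becomes an internal equality among letters of $w_i$ at positions $k'$ and $k$.

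In either case, each of the $\kappa_i$ edges belonging to $f^*$ imposes one constraint of the form ``the $k$-th letter of $w_i$ equals a prescribed generator or another prescribed letter of $w_i$''. These constraints are on distinct positions $k$ (by local injectivity of $Y\to X/G$ around edges, the positions on $f^*$ of distinct edges belonging to $f^*$ are distinct). The remaining step is to count: among cyclically reduced words of length $4$ on $m$ generators, the number satisfying these $\kappa_i$ coordinate constraints is, up to a multiplicative constant absorbed into the $\sim(2m-1)^4$ denominator, at most $(2m-1)^{4-\kappa_i}$, giving the desired ratio $(2m-1)^{-\kappa_i}$.

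The main technical point to watch is the cyclic reducedness requirement at the seam between positions $4$ and $1$, which slightly perturbs the naive count; this is handled exactly as in Ollivier's original argument by noting that the fraction of length-$4$ reduced words equals $(2m-1)^{4}$ up to a uniform multiplicative constant, so the asymptotic bound $(2m-1)^{-\kappa_i}$ is unaffected. The only other subtlety is the Case (b) above, but since the resulting constraint still determines the value of the $k$-th letter in terms of already-constrained data (namely the $k'$-th letter with $k'<k$ in the lexicographic order), the counting proceeds position-by-position without loss of a factor of $(2m-1)$ per constraint.
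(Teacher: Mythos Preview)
Your proposal is correct and follows essentially the same approach as the paper: condition on $w_1,\dots,w_{i-1}$, focus on a single face $f^*$ realising $\delta(f^*)=\kappa_i$, and observe that each of the $\kappa_i$ edges belonging to $f^*$ forces the corresponding letter of $w_i$ to equal a letter already determined (either by some $w_{i'}$ with $i'<i$, or by an earlier letter of $w_i$ itself), each such constraint costing a factor $(2m-1)^{-1}$ up to the negligible error from cyclic reducedness. The only minor remark is that the distinctness of the constrained positions on $f^*$ is automatic (distinct edges of a single face occupy distinct positions) rather than a consequence of local injectivity around edges; local injectivity is instead what guarantees that the pairs $(i_j,k_j)$ around a given edge are distinct, so that $j_{\min}$ is well defined.
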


\begin{proof}
Suppose that first $i-1$ words $w_1, \dots, w_{i-1}$ partially fulfilling $Y$ are given. We will successively analyze what is the choice for the consecutive letters of the word $w_i$, so that $Y$ is fulfilled. Let $k \leq 4$ and suppose that the first $k-1$ letters of $w_i$ are chosen. Let $f$ be the face realizing $\delta(f)=\kappa_i$ and let $e$ be the $k$-th edge of the face $f$.

If $e$ belongs to $f$ this means that there is another face $f'$ meeting $e$ which bears relator $i' < i$ or bears $i$ too, but $e$ appears in $f'$ as a $k' < k$-th edge. In both cases the letter on the edge $e$ is imposed by some letter already chosen so drawing it at random has probability $\leq \frac{1}{(2m-1)}$ up to some small error. In fact, this estimate would be valid only if the words were only required to be reduced; since they are cyclically reduced a negligible error appears, which we ignore.

Combining all these observations we get that the probability to choose at random the correct word $w_i$ is at most $p_{i-1}(2m-1)^{-\kappa_i}$.
\end{proof}

Now we can provide the proof of Proposition \ref{Proposition:Fullfill}

\begin{proof}[Proof of Proposition \ref{Proposition:Fullfill}]
For $1 \leq i \leq n$ let $P_i$ be the probability that there exists an $i$-tuple of words partially fulfilling $Y$ in the random set of relators $R$. We trivially have:

\begin{equation}\label{eq:4}
P_i \leq |R|^i p_i = (2m-1)^{4id}p_i
\end{equation}

Combining equations (\ref{eq:2}) and (\ref{eq:3}) we get

$$\mathrm{Cancel}(Y) \leq  \sum_{i=1}^n m_i(\log_{2m-1}p_{i-1} - \log_{2m-1}p_i)=$$

$$= \sum_{i=1}^{n-1} (m_{i+1} - m_i)\log_{2m-1}p_i -  m_n \log_{2m-1}p_n +  m_1 \log_{2m-1}p_0. $$
Now $p_0 = 1$ so $\log_{2m-1}p_0 = 0$ and we have
$$\mathrm{Cancel}(Y) \leq  \sum_{i=1}^{n-1}(m_{i+1} - m_i)\log_{2m-1}p_i -  m_n \log_{2m-1}p_n.$$
From (\ref{eq:4}) and the fact that $m_{i+1} - m_i \leq 0$ we obtain
$$\mathrm{Cancel}(Y) \leq \sum_{i=1}^{n-1} (m_{i+1} - m_i)(\log_{2m-1}P_i - 4 i d) -  m_n \log_{2m-1}(P_n - 4 n d)$$
Observe that $\sum_{i=1}^{n-1} (m_i - m_{i+1}) i + m_n n  =  \sum_{i=1}^n m_i = |Y|$. Hence
$$\mathrm{Cancel}(Y) \leq 4|Y| d +  \sum_{i=1}^{n-1} (m_{i+1} - m_i)\log_{2m-1}P_i -  m_n \log_{2m-1}P_n$$
Setting $P = \min_i P_i$ and using the fact that $m_{i+1} - m_i \leq 0$ we get

$$\mathrm{Cancel}(Y) \leq 4|Y|d + (\log_{2m-1}P) \sum_{i=1}^{n-1} (m_{i+1} - m_i) -  m_n \log_{2m-1}P =$$
$$= 4|Y|d - m_1 \log_{2m-1}P \leq |Y|(4 d - \log_{2m-1}P),$$
since $m_1 \leq |Y|$. It is clear that a complex is fulfillable if it is partially fulfillable for any $i \leq n$ and so:

$$\mathrm{Probability(}\emph{Y is fullfillable by relators of R}\mathrm{)} \leq P \leq (2m-1)^{\frac{4|Y| d - \mathrm{Cancel}(Y)}{|Y|}},$$
which was to be proven.
\end{proof}

\begin{proof}[Proof of Lemma \ref{Lemma:Local_generalized_isoperimetric}]
Denote by $C(K,m)$ the number of abstract square complexes with at most $K$ 2-cells. Observe that there are finitely many square complexes with at most $K$ faces. There are also finitely many ways to decide which faces would bear the same relator, and also finitely many ways to choose the starting point of each relator. Therefore, the values $\{C(K, m)\}_{m \in \mathbb{N}}$ with fixed $K$ have a uniform bound $M$. We know by Proposition \ref{Proposition:Fullfill} that for any abstract $2$-complex with at most $K$ faces violating the inequality $\mathrm{Cancel}(Y) < 4(d+\varepsilon)|Y|$ the probability that it is fulfilled by a random set of relators is $\leq (2m-1)^{- 4\varepsilon}$. So the probability that there exists a fulfilled $2$-complex with at most $K$ faces, violating the inequality is $\leq C(K, m)(2m-1)^{- 4\varepsilon l} \leq M (2m-1)^{- 4\varepsilon l}$, so it converges to $0$ as $m \rightarrow \infty$.
\end{proof}

\subsection{From the local version to the global}

We start this subsection by reformulating \cite[Lemma 11]{some}  by replacing the length of relator $l$ by 4.


\begin{lemma}\label{cut}
Let $G = \left<S | R \right>$ be a finite presentation in which all elements of $R$ have length 4. Suppose that for some constant $C'>0$ every van Kampen diagram $E$ of this presentation satisfies:

$$ |\partial E| \geq 4 C' |E|.$$
Then every van Kampen diagram $D$ can be partitioned into two diagrams $D'$, $D''$ by cutting it along a path of length at most $4 + 8\frac{\log(|D|)}{C'}$ with endpoints on the boundary of $D$ such that each of $D'$ and $D''$ contains at least one quarter of the boundary of $D$.
\end{lemma}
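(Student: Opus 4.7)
The plan is to construct the cutting path as (a component of) the inner boundary of a carefully chosen metric ball in $D$, and to bound its length via geometric growth of balls driven by the hypothesized isoperimetric inequality.

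Fix a vertex $v$ on $\partial D$ and for each $r \ge 0$ let $B_r$ be the subdiagram of $D$ consisting of the $2$-cells reachable from $v$ by an edge-path of length at most $r$ in $D^{(1)}$. Let $\beta_r$ be the \emph{inner boundary} of $B_r$, i.e.\ the edges of $\partial B_r$ not lying on $\partial D$, and set $a(r) = |\partial D \cap B_r|$. After filling in any planar holes (which does not decrease $|B_r|$ and does not increase $|\partial B_r|$), $B_r$ is a van Kampen subdiagram, so the hypothesis gives $|\partial B_r| \ge 4C'|B_r|$. Splitting $\partial B_r$ into its portion on $\partial D$ of length $a(r)$ and the inner part $\beta_r$ yields
\[
|\beta_r| \;\ge\; 4C'|B_r| - a(r).
\]
Each edge of $\beta_r$ is incident to a $2$-cell of $B_{r+1}\setminus B_r$, and since relators have length $4$ each such $2$-cell contributes at most $4$ edges to $\beta_r$; hence $|B_{r+1}| - |B_r| \ge |\beta_r|/4$. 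Combining these gives a recursion of the form $|B_{r+1}| \ge (1+C')|B_r| - a(r)/4$, which (as long as $a(r)$ stays below a fixed fraction of $|\partial D|$) forces geometric growth of $|B_r|$ with ratio approximately $1 + C'/2$. Consequently $B_r$ saturates $D$ within $r = O(\log|D|/C')$ steps, and there must exist some intermediate $r$ at which $|\beta_r| \le 8\log|D|/C'$.

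To meet the quarter-boundary condition, I would take $r$ to be the smallest index at which $a(r) \ge |\partial D|/2$. An appropriate component $\gamma$ of $\beta_r$ then separates $D$ into two subdiagrams, one containing the arc $\partial D \cap B_r$ of length approximately $|\partial D|/2$ and the other containing the complementary arc of comparable length, so both carry at least $|\partial D|/4$ of the boundary. The endpoints of $\gamma$ may need to be slid a short distance along $\partial D$ to ensure the quarter condition exactly; this adjustment costs at most $4$ edges, namely the length of one relator, which accounts for the additive ``$+4$'' in the bound $4 + 8\log|D|/C'$.

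The main obstacle is the bookkeeping in this last step: in one ball-growth step a single shell $B_{r+1}\setminus B_r$ might absorb a long arc of $\partial D$, so $a(r)$ can jump past $|\partial D|/2$ and the naive choice of $r$ need not balance both sides. The remedy is to allow any of the options (i) cutting along $\beta_{r-1}$, (ii) cutting along $\beta_r$, or (iii) taking a sub-arc of $\beta_r$ together with at most one $2$-cell boundary's worth of edges on $\partial D$; the $4$-edge slack in the statement is precisely what makes this case analysis go through.
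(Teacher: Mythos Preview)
The paper does not actually prove this lemma: it is stated as a direct reformulation of \cite[Lemma 11]{some} with the relator length $l$ replaced by $4$, and no argument is given beyond that citation. So there is no ``paper's own proof'' to compare against in detail.

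Your ball-growth strategy is exactly the approach used in Ollivier's original proof of the cited lemma: one grows combinatorial balls in $D^{(1)}$ from a boundary vertex, uses the linear isoperimetric inequality to force exponential growth of $|B_r|$, deduces that the process terminates in $O(\log|D|/C')$ steps, and extracts a short separating arc from an appropriate level set. So conceptually you are reconstructing the intended argument rather than finding a new one.

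That said, your write-up is a sketch rather than a proof, and you are right to flag the bookkeeping as the real issue. Two points deserve more care. First, the passage from the recursion $|B_{r+1}| \ge (1+C')|B_r| - a(r)/4$ to ``geometric growth with ratio approximately $1+C'/2$'' is not justified as written: since $a(r)$ can be as large as $|\partial B_r| \le 4|B_r|$, the subtracted term is not obviously dominated, and one needs to argue more carefully (for instance by tracking $|B_r|$ against $a(r)$ simultaneously, or by using that $a(r)$ is monotone and bounded by $|\partial D|$ while $|B_r|$ eventually exceeds any fixed multiple of $|\partial D|$). Second, the quarter-boundary condition and the exact constant $4 + 8\log(|D|)/C'$ require the precise case analysis you allude to at the end; your options (i)--(iii) are the right shape, but as stated they are assertions rather than arguments. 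None of this is a fatal gap---the strategy is sound---but to claim a proof you would need to fill in these quantitative steps, or simply cite Ollivier as the paper does.
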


We will state now prove two propositions ,,approximating'' Theorem \ref{Theorem:Generalized_Isoperimetric_Inequality}.

\begin{prop}\label{p1}
Let $G = \left<S | R \right>$ be a finite presentation such that all elements of $R$ are reduced words of length $4$. Suppose that for some constant $C'$ all van Kampen diagrams $D$ with respect to this presentation satisfy

$$|\partial D| \geq C' 4 |D|.$$
Choose any $K, \varepsilon > 0$. Take $A$ large enough to satisfy $\varepsilon A > 2 (1 +  \frac{2}{C'} \log (\frac{7 A}{6 C'}) + 2 K)$. Suppose that for some $C >0$ all diagrams with $K$--small legs $Y$ having the disc basis of boundary at most $4A$ satisfy:

$$|\parti Y| \geq C 4 |Y|.$$
Then all diagrams with $K$--small legs $Y$ having the disc basis of boundary at most $\frac{14}{3} A$ satisfy:

$$|\parti Y| \geq (C - \varepsilon) 4|Y|$$
\end{prop}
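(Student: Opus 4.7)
The proof is a bootstrap. Fix $Y$ with disc basis $Z$ satisfying $|\partial Z| \leq \tfrac{14}{3}A$. If in fact $|\partial Z| \leq 4A$, the hypothesis applied to $Y$ already gives $|\bound Y| \geq 4C|Y| \geq 4(C-\varepsilon)|Y|$, so assume $|\partial Z| > 4A$. The van Kampen isoperimetric inequality $|\partial Z| \geq 4C'|Z|$ gives $|Z| \leq \tfrac{|\partial Z|}{4C'} \leq \tfrac{7A}{6C'}$, so Lemma \ref{cut} produces a path of length
$$\ell \leq 4 + \frac{8}{C'}\log\!\Bigl(\frac{7A}{6C'}\Bigr)$$
cutting $Z$ into two van Kampen diagrams $Z'$, $Z''$, each carrying at least a quarter of $\partial Z$, hence with boundary at most $\tfrac{3}{4}|\partial Z| + \ell \leq \tfrac{7A}{2} + \ell$. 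The condition on $A$ is calibrated so that $\ell \leq \tfrac{A}{2}$, making both boundaries at most $4A$ and so the hypothesis applicable to each piece.

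Next, distribute the legs of $Y$: each leg $Z_i$ meets an external vertex of $Z$ lying in $Z'$, in $Z''$, or on the cut path (in the last case we assign it arbitrarily to one side). This produces diagrams $Y'$, $Y''$ with $K$-small legs, whose disc bases are $Z'$ and $Z''$. Since cells of $Z$ are partitioned between the two sides and legs are assigned wholesale, $|Y'| + |Y''| \geq |Y|$. Applying the hypothesis to each piece,
$$|\bound Y'| + |\bound Y''| \geq 4C(|Y'| + |Y''|) \geq 4C|Y|.$$

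Now compare the two sides. Every edge of $Y$ neither on the cut path nor in a leg meeting the cut contributes equally to $|\bound Y|$ and to $|\bound Y'| + |\bound Y''|$. Each cut-path edge is duplicated (once in each side), adding $2$ to the sum on the right, for a total surplus of $2\ell$. Legs that meet cut-path edges or cut-path vertices contribute a further additive correction bounded by $O(K)$, since each leg has at most $K$ faces and therefore at most $4K$ edges. Thus
$$|\bound Y| \geq 4C|Y| - 2\ell - O(K).$$
Using $|Y| \geq |Z| \geq \tfrac{|\partial Z|}{4} > A$, the assumption on $A$ (which is exactly calibrated to give $2\ell + O(K) < 4\varepsilon A$) yields $|\bound Y| \geq 4(C - \varepsilon)|Y|$.

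The main obstacle is the bookkeeping for the legs. Assigning legs that attach at cut-path vertices, and tracking the degree of edges shared between a leg and the cut path (which is permitted, since legs may share edges with $Z$ and with each other, subject only to the multiplicity-two constraint), produces the additive $O(K)$ correction; the $2K$ term in the hypothesis on $A$ is precisely what absorbs it. Verifying that the chosen constant is large enough, and that the operation of splitting and redistributing legs genuinely produces two diagrams with $K$-small legs satisfying Definition \ref{Definition:Diagram_with_small_legs}, requires a careful edge-by-edge comparison between $Y$ and the pair $(Y', Y'')$.
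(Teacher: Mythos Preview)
Your approach is essentially the same as the paper's: cut the disc basis $Z$ via Lemma~\ref{cut}, distribute the legs to form two diagrams with $K$-small legs whose disc bases have boundary at most $4A$, apply the hypothesis to each, and absorb the additive correction using $4|Y|\geq |\partial Z|>4A$. The only notable difference is that the paper assigns a leg to \emph{both} $Y'$ and $Y''$ whenever it is adjacent to both pieces (so $|Y'|+|Y''|\geq |Y|$ is automatic and the correction term becomes the explicit $2\bigl(4+\tfrac{8}{C'}\log|Z|+8K\bigr)$), whereas you assign each leg to exactly one side; both variants work and lead to the same estimate.

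One caution: your claim that the hypothesis on $A$ forces $\ell\leq A/2$ is not quite right as stated---the condition gives $\ell<2\varepsilon A$, which only yields $\ell\leq A/2$ when $\varepsilon$ is small. The paper simply asserts $|\partial Z'|,|\partial Z''|<4A$ without comment, so this is a shared looseness rather than a defect in your argument; in the intended application $\varepsilon$ is indeed small.
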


\begin{proof}
Let $Y$ be the diagram with $K$--small legs such that its disc basis $Z$ has the boundary length between $4 A$ and $\frac{14}{3} A $. By Lemma \ref{cut} we can perform a partition of $Z$ into two reduced disc diagrams $Z'$ and $Z''$ such that: $|\partial Z'|, |\partial Z''| \geq \frac{1}{4} |\partial Z|$ and $|\partial Z' \cap \partial Z''| \leq 4 +  \frac{8}{C'} \log (|Z|)$. The number of $1$--cells that belong both to $Z$ and one of the legs $Z_i$ is bounded by $4K$, since there are no more than $4K$ $1$--cells in $Z_i$. We define $Y'$ to be the union of $Z'$ and all legs of $Y$ that are adjacent to $Z'$, and $Y''$ to be the union of $Z''$ and all legs of $Y$ adjacent to it. Hence we can perform a partition of $Y$ into two diagrams $Y'$ and $Y''$ with $K$--small legs such that $|\parti Y| \geq |\parti Y'| + |\parti Y''| - 2 (4 +  \frac{8}{C'} \log (|Z|) + 8K)$.

Note that $|\partial Z'|, |\partial Z''| < 4A$ so by our assumption we know

$$|\parti Y'| \geq C 4 |Y'|$$
$$|\parti Y''| \geq C 4 |Y''|.$$
Moreover by the assumption on van Kampen diagrams we obtain that $\frac{1}{C'} \log (|Z|) < \frac{1}{C'} \log (\frac{7 A}{6 C'})$. Hence

$$|\parti Y| \geq |\parti Y'| + |\parti Y''| - 2 (4 +  \frac{8}{C'} \log (|Z|) + 8 K) $$
$$\geq C 4 (|Y'| + |Y''|) - 2 (4 +  \frac{8}{C'}  \log (\frac{7 A}{6 C'}) + 8 K).$$

We have chosen $A$ large enough so that $2 (1 + 2 \frac{1}{C'} \log (\frac{7 A}{6 C'}) + 2 K) < \varepsilon A$ so we can continue estimation

$$|\parti Y| \geq C l |Y| - 4 \varepsilon A  \geq 4(C - \varepsilon)|Y|$$
since $4|Y| \geq |\partial Z| \geq 4 A$.
\end{proof}

The last approximation to Theorem \ref{Theorem:Generalized_Isoperimetric_Inequality} is the following

\begin{prop}\label{p2}
Let $G = \left<S | R \right>$ be a finite presentation such that all elements of $R$ are reduced words of length $4$. Suppose that for some constant $C'$ all van Kampen diagrams $D$ with respect to the presentation satisfy

\begin{equation}\label{Equation:All_van_Kampen}
|\partial D| \geq C' l |D|.
\end{equation}
Choose any $K, \varepsilon > 0$. Take $A$ large enough to satisfy $\varepsilon A > 2 (1 + 2 \frac{1}{C'} \log (\frac{7 A}{6 C'}) + 2 K)$. Suppose that for some $C >0$ all diagrams $Y$ with $K$--small legs and the disc basis with boundary length at most $4 A$ satisfy:

\begin{equation}\label{Equation:All_small_legs}
|\parti Y| \geq 4 C |Y|.
\end{equation}
Then all diagrams with $K$--small legs $Y$ satisfy:
$$|\parti Y| \geq 4(C - \varepsilon)|Y|$$
\end{prop}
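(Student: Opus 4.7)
The plan is to iterate Proposition \ref{p1} infinitely many times, each iteration multiplying the range of admissible disc-basis boundary lengths by $\frac{7}{6}$ at the cost of a small decrease $\varepsilon_k$ in the isoperimetric constant. Define $A_1 := A$ and $A_{k+1} := \frac{7}{6} A_k$, so that $A_k = (7/6)^{k-1} A$, and set $C_1 := C$. Choose positive numbers $\varepsilon_1, \varepsilon_2, \dots$ with $\sum_{k} \varepsilon_k \leq \varepsilon$, each satisfying the Proposition \ref{p1} admissibility bound at scale $A_k$:
\[\varepsilon_k A_k > 2\Bigl(1 + \tfrac{2}{C'} \log\bigl(\tfrac{7 A_k}{6 C'}\bigr) + 2K\Bigr).\]

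At step $k$, the inductive hypothesis is that $|\parti Y| \geq 4 C_k |Y|$ for every $K$-small-legs diagram $Y$ whose disc basis has boundary length at most $4 A_k$; the base case $k=1$ is exactly the hypothesis of the present proposition. Applying Proposition \ref{p1} with constants $A_k$, $C_k$, $\varepsilon_k$ extends this lower bound to disc-basis boundary length $\leq \frac{14}{3} A_k = 4 A_{k+1}$, with weakened constant $C_{k+1} := C_k - \varepsilon_k$. Since $A_k \to \infty$, every $K$-small-legs diagram eventually falls into the range, and the final constant is $C_{\infty} = C - \sum_{k} \varepsilon_k \geq C - \varepsilon$, which is the desired conclusion.

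The main obstacle is verifying that a sequence $(\varepsilon_k)$ with both properties --- admissibility at every scale and summability to at most $\varepsilon$ --- actually exists. The admissibility constraint forces $\varepsilon_k$ to be essentially of size $\log(A_k)/A_k$, but since $A_k$ grows geometrically while $\log A_k$ grows only linearly in $k$, the minimal admissible values form a convergent series whose total is of order $\log A / A$, matching the scale of the bound $\varepsilon > f(A)/A$ ensured by the hypothesis on $A$. Interpreting ``$A$ large enough'' in the statement to absorb the fixed multiplicative constant coming from this geometric-series estimate therefore guarantees $\sum_k \varepsilon_k \leq \varepsilon$, and the iteration closes. The auxiliary verifications --- that each iteration of Proposition \ref{p1} applies with the cumulative constants $C_k$, and that the class of $K$-small-legs diagrams is preserved at every stage --- are immediate from the statement of Proposition \ref{p1}.
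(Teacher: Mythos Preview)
Your approach is essentially the same as the paper's: iterate Proposition~\ref{p1} with $A_k=(7/6)^{k-1}A$ so that the admissible disc-basis boundary length grows geometrically, while the decrements $\varepsilon_k$ form a summable series. The only difference is that the paper makes the explicit choice $\varepsilon_k=\varepsilon(6/7)^{k/2}$, verifies directly that this satisfies the admissibility inequality at each scale, and then bounds $\sum_k(6/7)^{k/2}<14$; this yields the conclusion with $C-14\varepsilon$ in place of $C-\varepsilon$, which is harmless since $\varepsilon$ is arbitrary. Your abstract existence argument for the $\varepsilon_k$ is correct but leaves the same constant implicit in the phrase ``interpreting `$A$ large enough'\,''; the paper's explicit formula is slightly cleaner because it makes that constant visible and removes any need to re-read the hypothesis on $A$.
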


\begin{proof}
The assumptions of this proposition and Proposition \ref{p1} are the same. Hence by the statement of Proposition \ref{p1} we can conclude that the assumptions of Proposition \ref{p1} are fulfilled with the new parameters: $A_1 = \frac{7}{6}A, \varepsilon_1 = \varepsilon (\frac{6}{7})^{\frac{1}{2}}$ and $C_1 = C - \varepsilon$ instead of $A, \varepsilon, C$ and with the same $C'$ (these new parameters indeed satisfy $\varepsilon_1 A_1 > 2 (1 +  \frac{2}{C'} \log (\frac{7 A_1}{6 C'}) + 2 K)$).
By induction every diagram with $K-small$ legs such that its disc basis has boundary of length at most $4 A \left(\frac{7}{6} \right)^k$ satisfies

$$|\parti Y| \geq \left(C - \varepsilon \sum_{i = 0}^{k-1} \left(\frac{6}{7} \right)^{\frac{i}{2}} \right)4|Y| $$
and we conclude by the inequality $\sum_{i=0}^{\infty} \left(\frac{6}{7} \right)^{\frac{i}{2}} < 14$.
\end{proof}

\begin{proof}[Proof of Theorem \ref{Theorem:Generalized_Isoperimetric_Inequality}]
First, by Theorem \ref{Theorem:Isoperimetric_inequality} we know that all Van Kampen diagrams satisfy Equation (\ref{Equation:All_van_Kampen}) for $C=(1-2d-\varepsilon')$, for arbitrary small $\varepsilon'$. By Lemma \ref{Lemma:Local_generalized_isoperimetric} we know that for any $K, \varepsilon$ all diagrams with $K$-small legs  satisfy Equation (\ref{Equation:All_small_legs}). Hence the assumptions of the Proposition \ref{p2} are satisfied, which gives the statement.
\end{proof}

\section{Colored hypergraphs}\label{Section:Colored_hypergraphs}

In this section we we will introduce colored hypergraphs, which can be effectively used in investigating random groups in the square model. We start with

\begin{prop}\label{Proposition:Special_cells} Let $\cay$ be the Cayley complex of a random group in the square model at density $d < \frac{1}{3}$. Then w.o.p.:
\begin{enumerate}
\item There is no pair of $2$-cells in  $\cay$ having three common edges.
\item If $D$ and $E$ are $2$-cells in $\cay$ such that $|\partial D \cap \partial E| = 2$, then there is no $2$-cell $F$ in $\cay$ such that $|(\partial D \cup \partial E) \cap \partial F| > 1$.
\end{enumerate}
\end{prop}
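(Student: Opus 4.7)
The plan is to reduce both parts of the proposition to the local version of the generalized isoperimetric inequality (Lemma \ref{Lemma:Local_generalized_isoperimetric}) applied to small subcomplexes of $\cay$. Since any subcomplex of $\cay$ is automatically fulfilled by $R$, it suffices to exhibit each alleged configuration as a 2-complex $Y$ with at most three faces and then show that its cancellation violates the bound $\text{Cancel}(Y) \leq 4(d+\varepsilon)|Y|$ for a suitably small $\varepsilon$ once $d < \tfrac{1}{3}$.

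For (1), I would take $Y = D \cup E$, so $|Y| = 2$, and observe that each of the three common edges has degree at least $2$, giving
\[
\text{Cancel}(Y) \ \geq \ 3 \cdot (2-1) \ = \ 3.
\]
The threshold from Lemma \ref{Lemma:Local_generalized_isoperimetric} is $4(d+\varepsilon)|Y| = 8(d+\varepsilon)$, which is strictly less than $3$ whenever $d < \tfrac{1}{3}$ and $\varepsilon < \tfrac{3}{8} - d$. Applying Lemma \ref{Lemma:Local_generalized_isoperimetric} with $K = 2$ then rules out such a pair w.o.p.

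For (2), I would set $Y = D \cup E \cup F$, so $|Y| = 3$. Writing $w$ for the number of edges lying on all three boundaries, $x$ for the number of edges in $\partial D \cap \partial E$ off $\partial F$, and $y, z$ for the number of edges in $\partial F$ shared with exactly one of $D, E$, the hypothesis $|\partial D \cap \partial E| = 2$ forces $x + w = 2$, while the negation of the conclusion gives $y + z + w \geq 2$. A short case check on $w \in \{0, 1, 2\}$ yields
\[
\text{Cancel}(Y) \ = \ x + y + z + 2w \ \geq \ 4
\]
in every subcase. Since $4(d+\varepsilon)|Y| = 12(d+\varepsilon) < 4$ whenever $d < \tfrac{1}{3}$ and $\varepsilon < \tfrac{1}{3} - d$, Lemma \ref{Lemma:Local_generalized_isoperimetric} with $K = 3$ supplies the contradiction, and intersecting the two w.o.p.\ events established for (1) and (2) completes the proof. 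I expect no real obstacle here: the forbidden configurations involve at most three cells, so the local inequality is enough and no ``diagram with legs'' machinery is needed; the numerical slack $8d < 3$ and $12d < 4$ at $d < \tfrac{1}{3}$ is exactly what makes the cancellation estimates close.
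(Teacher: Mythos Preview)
Your proposal is correct and follows essentially the same approach as the paper: build the small complex $Y$ out of the offending $2$-cells and show that its cancellation (equivalently, generalized boundary length) violates the isoperimetric bound at $d<\tfrac13$. The only cosmetic differences are that the paper phrases things via $|\tilde\partial Y|$ rather than $\mathrm{Cancel}(Y)$ and invokes Theorem~\ref{Theorem:Generalized_Isoperimetric_Inequality} rather than the local Lemma~\ref{Lemma:Local_generalized_isoperimetric}; your observation that the local lemma already suffices (the configurations have at most three faces) is entirely valid, and your explicit case split in part~(2) is in fact a bit more careful than the paper's one-line ``gluing $F$ along two edges does not change $|\tilde\partial|$''. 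One tiny quibble: in part~(2) you should write $\mathrm{Cancel}(Y)\geq x+y+z+2w$ rather than equality, since an edge could in principle appear with higher multiplicity in a single attaching map, but this only helps you and the inequality $x+y+z+2w=(x+w)+(y+z+w)\geq 2+2=4$ already gives what you need without the case split.
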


\begin{proof}
\textit{(1)} A pair of $2$-cells in $\cay$ having three common edges forms a van Kampen diagram $\mathcal{D}$ satisfying $|\partial \mathcal{D}|=2$ and $|\mathcal{D}|=2$. Therefore $|\partial \mathcal{D}| \leq \frac{4}{3}|\mathcal{D}|$, so $\mathcal{D}$ violates Theorem \ref{Theorem:Generalized_Isoperimetric_Inequality}, which implies that w.o.p. there is no such a pair of 2-cells. \\
\textit{(2)} Suppose, on the contrary, that there exist such 2-cells $D$, $E$ and $F$. Consider a diagram $\mathcal{D}$ consisting of 2-cells $D$ and $E$. Adding a new $2$-cell to $\mathcal{D}$ by gluing it along two edges does not change the generalized boundary length of $\mathcal{D}$. Hence the diagram $Y$ that is a union of $\mathcal{D}$ and the 2-cell $F$ satisfies: $|\parti Y| = 4$, $|Y| = 3$. Therefore, $Y$ violates Theorem \ref{Theorem:Generalized_Isoperimetric_Inequality}, which states that $|\tilde{\partial{Y}}|>4(1-2d)|Y|$, so w.o.p. there is no such a triple of 2-cells.
\end{proof}

\begin{definition}\label{Definition:Distinguished} Let $\cay$ be the Cayley complex of a random group in the square model at density $\leq \frac{1}{3}$. If two $2$-cells in $\cay$ have two common edges, we say they are \textit{strongly adjacent}. If a 2-cell $D$ is strongly adjacent to some other 2-cell we say that $D$ is a \textit{distinguished} 2-cell. The common edges of two strongly adjacent 2-cells we call \textit{distinguished edges}. A 2-cell which is not a distinguished 2-cell we call a \textit{regular} 2-cell.
\end{definition}

\begin{prop}\label{Proposition:Two_distinguished_in_one_orbit}
Let $D$ and $E$ be strongly adjacent 2-cells. Then there is no element of a random group $G$ sending $D$ to $E$ under the natural action of $G$ on $\cay$. In other words, $D$ and $E$ are in different orbits of the action of $G$ on $\cay$.
\end{prop}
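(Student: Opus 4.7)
The plan is to derive a contradiction from the assumption that some nontrivial $g\in G$ satisfies $g\cdot D=E$, by producing a third $2$-cell that together with $D$ and $E$ violates Proposition \ref{Proposition:Special_cells}(2).

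Assume, towards a contradiction, that $g\cdot D=E$ for some $g\in G\setminus\{1\}$, and set $E_2 := g\cdot E = g^2\cdot D$. Since $G$ acts freely on the Cayley complex $\cay$ and $g\neq 1$, the cell $E_2 = g\cdot E$ is distinct from $E$. Moreover, a random group in the square model is torsion-free for $d<\tfrac{1}{2}$ (as recalled in the introduction), so $g^2\neq 1$, and therefore $E_2 = g^2\cdot D$ is distinct from $D$. Hence $E_2\notin\{D,E\}$.

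The combinatorial isomorphism of $\cay$ induced by $g$ sends the two edges of $\partial D\cap\partial E$ bijectively onto two edges of $\partial(g\cdot D)\cap\partial(g\cdot E)=\partial E\cap\partial E_2$. In particular, $|(\partial D\cup\partial E)\cap\partial E_2|\geq|\partial E\cap\partial E_2|\geq 2>1$, contradicting Proposition \ref{Proposition:Special_cells}(2) applied with $F=E_2$. The only point requiring attention is verifying that $E_2$ is genuinely distinct from both $D$ and $E$; this is precisely where torsion-freeness (together with the freeness of the $G$-action on $2$-cells) is used, and with that in hand the rest of the argument is immediate.
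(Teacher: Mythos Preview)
Your proof is correct but proceeds differently from the paper's. The paper argues as follows: by Proposition~\ref{Proposition:Special_cells} a $2$-cell can be strongly adjacent to at most one other $2$-cell, so if $g\cdot D=E$ then necessarily $g\cdot E=D$, i.e.\ $g$ preserves the pair $\{D,E\}$; the union $D\cup E$ has a unique internal vertex, which $g$ must therefore fix, contradicting freeness of the $G$-action on the vertices of the Cayley graph. Your argument instead produces a third cell $E_2=g\cdot E$ and applies Proposition~\ref{Proposition:Special_cells}(2) directly to the triple $D,E,E_2$. The trade-off: the paper's route needs only freeness on vertices (automatic for any Cayley complex) together with the small geometric observation about the internal vertex, whereas your route invokes torsion-freeness (to ensure $g^2\neq 1$) and freeness of the action on $2$-cells, but avoids having to identify the internal vertex. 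Both arguments are short and ultimately rest on the same uniqueness of strong adjacency encoded in Proposition~\ref{Proposition:Special_cells}(2); the paper's version is marginally more self-contained since it does not appeal to the torsion-freeness result quoted from the introduction.
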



$$\frac{\left( (2n-1)^4 - 6(2n)^2 \atop (2n-1)^{4d} \right)}{\left( (2n-1)^4 \atop (2n-1)^{4d} \right)},$$


\begin{proof}
Suppose that there exists $g \in G$ sending $D$ to $E$. By Lemma \ref{Proposition:Special_cells} we know that a 2-cell can be strongly adjacent to at most one other 2-cell, co $g$ preserves the pair $\{D, E\}$. The diagram $Y$ consisting of 2-cells $D$ and $E$ has exactly one internal vertex (that is a vertex isolated from the boundary). If the diagram $\{D, E\}$ is preserved by $g$ the internal vertex must be also preserved, which contradicts the fact that the action of $G$ on its Cayley graph is free.
\end{proof}

\begin{defi}[Procedure of painting the Cayley complex]
By Proposition \ref{Proposition:Special_cells} we know that distinguished 2-cells form a set $\mathcal{S}$ of disjoint unordered pairs of $2$-cells. By Proposition \ref{Proposition:Two_distinguished_in_one_orbit} we know that none of these pairs lies in one orbit of $G$-action on $\cay$. We will now perform a procedure of painting distinguished 2-cells in two colors: red and blue.

In the first step we consider one of the pairs $\{D, E\} \in \mathcal{S}$ and choose arbitrarily one 2-cell from this pair. Without loss of generality we can suppose that $D$ is chosen. We say that $D$ is a \textit{red} 2-cell and $E$ is a \textit{blue} cell. Moreover we say that each element in the orbit of $D$ is \textit{red} and each element in the orbit of $E$ is \textit{blue}.

In the next step we choose another pair of strongly adjacent 2-cells $\{D_1, D_2 \}$, that is not in the orbit of the pair $\{D, E \}$. Then we paint all elements in the orbit of $D_1$ in red, and all elements in the orbit of $E_1$ in blue. We repeat this step until all distinguished 2-cells are painted. The Cayley complex is cofinite, so there are finitely many orbits of $2$-cells and therefore this painting procedure must end after a finite number of steps. The Cayley complex with painted distinguished 2-cells we call \textit{painted Cayley complex}.
\end{defi}

From now we will always assume that the Cayley complex of a random group is painted, and we will denote it by $\cay$.

We now recall the definition of a hypergraph from \cite[Definition 2.1]{ow11}.

\begin{defi}[standard hypergraph]\label{Definition:Standard_hypergraph}
Let $X$ be a connected square complex. We define a graph $\Gamma$ as follows: The set of vertices of $\Gamma$ is the set of $1$-cells of $X$. There is an edge in $\Gamma$ between two vertices if there is some $2$-cell $R$ of $X$ such that these vertices correspond to opposite $1$-cells in the boundary of $R$ (if there are several such $2$-cells we put as many edges in $\Gamma$). The $2$-cell $R$ is the $2$-cell of $X$ \textit{containing} the edge.

There is a natural map $\varphi$ from $\Gamma$ to $X$, which sends each vertex of $\Gamma$ to the midpoint of the corresponding $1$-cell of $X$ and each edge of $\Gamma$ to a segment joining two opposite points in the $2$-cell $R$. Note that the images of two edges contained in the same $2$-cell $R$ always intersect, so that in general $\varphi$ is not an embedding.

A \textit{standard hypergraph} in $X$ is a connected component of $\Gamma$. The $1$-cells of $X$ through which a hypergraph passes are \textit{dual} to it. The hypergraph $\Lambda$ \textit{embeds} if $\varphi$ is an embedding from $\Lambda$ to $X$, that is, if no two distinct edges of $\Lambda$ are mapped to the same $2$-cell of $X$.

We call the subdiagram of $X$ consisting of all open faces containing edges of the hypergraph $\Lambda$ the \textit{carrier of} $\Lambda$ and denote it by $\text{Car}(\Lambda)$.

The \textit{hypergraph segment} in $X$ is a finite path in a hypergraph immersed into $X$. The \textit{carrier of a segment} $\Lambda'$ denoted $\text{Car}(\Lambda')$ is the diagram consisting of all open 2-cells containing edges of the segment $\Lambda'$.
\end{defi}

By \cite[Lemma 5.16]{odrz} we know that standard hypergraphs are embedded trees. Moreover, \cite[Lemma 2.3]{ow11} states that for each standard hypergraph $\Gamma$ the space $\cay - \Gamma$ has exactly two connected components. 
However the system of standard hypergraphs is not sufficient to construct a proper action of a random group on a CAT(0) cube complex, which we need to prove Theorem \ref{Corollary:Cubulating}. Our strategy of constructing such an action is based on showing that the wall space metric on the Cayley complex is equivalent to the edge path metric. This means in particular that for every number $N$ if we have two enough distant points in the Cayley complex they are separated by at least $N$ walls, which are hypergraphs in our case. In Figure \ref{Figure:Problem_with_standard} we present an example of a geodesic edge-path $\gamma$ of an arbitary length such that no standard hypergraph separates its ends $x$ and $y$. Situation presented in Figure \ref{Figure:Problem_with_standard} cannot be excluded for densities $\geq \frac{1}{4}$ in the square model, therefore we need a better system of walls, based on more systems of hypergraphs.

\begin{figure}[h]
\centering
\begin{tikzpicture}[scale=0.9, rotate=-45]

\draw[black] (0,0) rectangle (2,2);
\draw[black] (2,2) rectangle (4,4);
\draw[black] (4,4) rectangle (6,6);

\draw[black] (8,6) rectangle (10,8);

\draw (0,2) -- (2,0);
\draw (2,4) -- (4,2);
\draw (4,6) -- (6,4);
\draw (8,8) -- (10,6);

\fill (1,1) circle (1.5pt);
\fill (3,3) circle (1.5pt);
\fill (5,5) circle (1.5pt);
\fill (9,7) circle (1.5pt);

\fill (0,0) circle (1.5pt);
\fill (10,8) circle (1.5pt);

\draw[very thick]  (0,0) -- (2,0) -- (2,2) -- (4,2) -- (4,4) -- (6,4) -- (6,6) -- (6.4,5.7); 
\draw[very thick] (7.4,5.7) -- (8,6) -- (8,8) -- (10,8);

\draw[very thick, gray]  (1,0) -- (0.5,1.5) -- (2,1);
\draw[very thick, gray]  (3,2) -- (2.5,3.5) -- (4,3);
\draw[very thick, gray]  (5,4) -- (4.5,5.5) -- (6,5);

\draw[very thick, gray]  (8,7) -- (9.5,6.5) -- (9,8);

\draw (6.9,5.7) node {$\dots$};
\draw (6.9,5.7) node[above] {$\gamma$};

\draw (0,0) node[left] {$x$};
\draw (10,8) node[right] {$y$};

\end{tikzpicture}
\caption{Arbitrary long geodesic edge-path joining points $x$ and $y$ that are not separated by a hypergraph.}
\label{Figure:Problem_with_standard}
\end{figure}
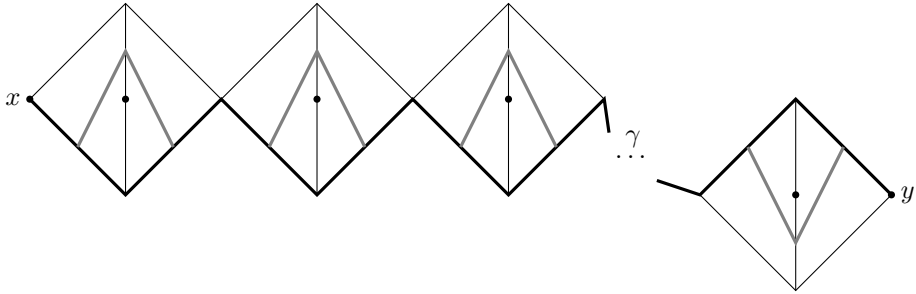

We now introduce a new type of hypergraph - red hypegraph, which is modified on red 2-cells.

\begin{defi}[red and blue hypergraph]\label{Definition:Red_hypergraph}
Let $X$ be a connected square complex that is a subcomplex of the painted Cayley complex $\cay$. We define a graph $\Gamma$ as follows: The set of vertices of $\Gamma$ is the set of $1$-cells of $X$. We put an edge in $\Gamma$ between two vertices if one the following holds:
\begin{enumerate}
\item there is some regular or blue $2$-cell $R$ of $X$ such that these vertices correspond to opposite $1$-cells in the boundary of $R$ (if there are several such $2$-cells we put as many edges in $\Gamma$
\item there is some red $2$-cell $R$ of $X$ such that one of this vertices  corresponds to distinguished edge $e$ of $R$ and the second corresponds to non distinguished edge of $R$ which is not opposite to $e$.
\end{enumerate}
The $2$-cell $R$ is the $2$-cell of $X$ \textit{containing} the edge.

There is a natural map $\varphi$ from $\Gamma$ to $X$, which sends each vertex of $\Gamma$ to the midpoint of the corresponding $1$-cell of $X$ and each edge of $\Gamma$ to a segment joining two points in the boundary $2$-cell $R$.

A \textit{red hypergraph} in $X$ is a connected component of $\Gamma$. The $1$-cells of $X$ through which a hypergraph passes are \textit{dual} to it. The red hypergraph $\Lambda$ \textit{embeds} if $\varphi$ is an embedding from $\Lambda$ to $X$.

We call the subdiagram of $X$ consisting of all open 2-cells containing edges of the red hypergraph $\Lambda$ the \textit{carrier of} $\Lambda$ and denote it by $\text{Car}(\Lambda)$.

The \textit{hypergraph segment} in $X$ is a finite path in a hypergraph immersed into $X$. The carrier of a segment $\Lambda'$ denoted $\text{Car}(\Lambda')$ is the diagram consisting of all open 2-cells containing edges of the segment $\Lambda'$.

If in the above definition we replace every occurrence of the word ``red'' by the word ``blue'' we obtain the definition of the \textit{blue hypergraph}.
\end{defi}

The comparison of the hypergraphs introduced above is presented in Figure \ref{Figure:Comparison_of_hypergraphs}.

\begin{figure}[h]
\centering
\begin{tikzpicture}[scale=0.9]

\filldraw[draw=black,fill=gray!20] (-2,0) rectangle (0,2);
\filldraw[draw=black,fill=gray!20] (0,0) rectangle (2,2);
\filldraw[draw=black,fill=gray!20] (2,0) rectangle (4,2);
\filldraw[draw=black,fill=gray!20] (0,2) rectangle (2,4);


\path[black, draw] (0,2) -- (2,0);
\path[red, draw,line width=2pt] (-2,1.1) -- (0,1.1)--(0.5,1.5)--(2,1.1)--(4,1.1);
\path[black, draw,line width=2pt] (-2,0.9) -- (0,0.9)--(1.5,0.5)--(0.9,2)--(0.9,4);

\fill (1,1) circle (1.5pt);

\draw (3,1.1) node[above] { ${\color{red} \Lambda_{\text{red}}}$};
\draw (1,3) node[right] { ${\color{black} \Lambda_{\text{st}}}$};

\draw (0,0) node[above right] {\footnotesize{red 2-cell}};
\draw (1.55,2.1) node[below] {\footnotesize{blue}};
\draw (1.6,1.8) node[below] {\footnotesize{2-cell}};

\draw (1,-1) node[below] {a)};


\begin{scope}[shift={(8,0)}]

\filldraw[draw=black,fill=gray!20] (-2,0) rectangle (0,2);
\filldraw[draw=black,fill=gray!20] (0,0) rectangle (2,2);
\filldraw[draw=black,fill=gray!20] (2,0) rectangle (4,2);
\filldraw[draw=black,fill=gray!20] (0,2) rectangle (2,4);


\path[black, draw] (0,2) -- (2,0);
\path[blue, draw,line width=2pt] (-2,1.1) -- (0,1.1)--(1.3,0.7)--(2,1.1)--(4,1.1);
\path[black, draw,line width=2pt] (-2,0.9) -- (0,0.9)--(1.5,0.5)--(0.9,2)--(0.9,4);

\fill (1,1) circle (1.5pt);

\draw (3,1.1) node[above] { ${\color{blue} \Lambda_{\text{blue}}}$};
\draw (1,3) node[right] { ${\color{black} \Lambda_{\text{st}}}$};

\draw (0,0) node[above right] {\footnotesize{red 2-cell}};
\draw (1.55,2) node[below] {\footnotesize{blue}};
\draw (1.6,1.7) node[below] {\footnotesize{2-cell}};

\draw (1,-1) node[below] {b)};

\end{scope} 

\end{tikzpicture}
\caption{a) The comparison of a red hypergraph $\Lambda_{\text{red}}$ and a standard hypergraph $\Lambda_{\text{st}}$, b) The comparison of a blue hypergraph $\Lambda_{\text{blue}}$ and a standard hypergraph $\Lambda_{\text{st}}$.}
\label{Figure:Comparison_of_hypergraphs}
\end{figure}
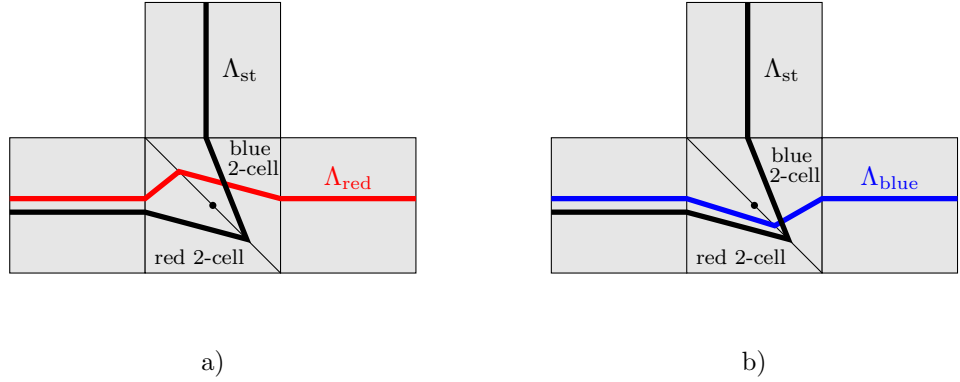

\subsection{Hypergraphs are embedded trees}

Now we are going to prove

\begin{theo}\label{Theorem:Red_hypergraphs_are_trees}
In the square model at density $d < \frac{1}{3}$ w.o.p. all red and blue hypergraphs in $\cay$ are embedded trees.
\end{theo}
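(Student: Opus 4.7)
My plan is to follow the strategy of \cite[Lemma 5.16]{odrz} showing that standard hypergraphs are embedded trees, replacing the classical isoperimetric inequality by Theorem~\ref{Theorem:Generalized_Isoperimetric_Inequality}, which is specifically tailored to handle the ``carrier-with-legs'' complexes that arise when red 2-cells are traversed.

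Suppose for contradiction that some red hypergraph $\Lambda \subset \cay$ is not an embedded tree. Pick a minimal witness $\gamma \subset \Lambda$: either a reduced non-trivial closed path, or a shortest path in $\Lambda$ joining two distinct edges that share a 2-cell of $\cay$. Let $Y = \mathrm{Car}(\gamma) \subset \cay$. The structural observation driving the proof is that whenever $\gamma$ passes through a red 2-cell $R$ it does so via a distinguished edge $e$ (by rule (2) in Definition~\ref{Definition:Red_hypergraph}). Since $e$ is also an edge of the blue partner $B$ of $R$, and on the blue cell $B$ the red hypergraph obeys the opposite-edges rule (1), $\gamma$ continues through $B$; hence $B$ too is a 2-cell of $Y$. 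Moreover, $R$ and $B$ share a \emph{second} distinguished edge $e'$, which a priori need not lie on $\gamma$ but is nonetheless an edge of two 2-cells of $Y$.

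Next I would estimate $\mathrm{Cancel}(Y)$ from below. Each of the $|\gamma|$ crossing 1-cells between consecutive 2-cells of $\gamma$ has degree at least $2$ in $Y$ and contributes $\geq 1$; additionally, for each red 2-cell of $\gamma$ the second distinguished edge shared between $R$ and its blue partner contributes another $\geq 1$. Writing $r$ for the number of red 2-cells of $\gamma$ this gives $\mathrm{Cancel}(Y) \geq |\gamma| + r$. If $r = 0$ then rule (2) is never used along $\gamma$, so $\gamma$ is simultaneously a cycle (or non-embedding) in the \emph{standard} hypergraph, which is excluded w.o.p.\ by \cite[Lemma 5.16]{odrz}. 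If $r \geq 1$, a finer case analysis --- depending on whether the two distinguished edges of $\{R, B\}$ sit opposite or adjacent inside each square (in the opposite case the second distinguished edge in fact coincides with a previous crossing 1-cell of $\gamma$, producing a 1-cell of degree $\geq 3$ in $Y$ and so extra cancellation; in the adjacent case the cell $B$ is forced into a specific position in $\gamma$) --- upgrades the bound to
\[
\mathrm{Cancel}(Y) \;>\; 4\!\left(\tfrac{1}{3} + \varepsilon\right)|Y|
\]
for any sufficiently small $\varepsilon > 0$. Being a subcomplex of $\cay$, $Y$ is fulfillable by $R$, and one checks that it is a diagram with $K$-small legs in the sense of Definition~\ref{Definition:Diagram_with_small_legs} for $K$ uniformly bounded in terms of the possible attachment patterns at strongly adjacent pairs. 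Theorem~\ref{Theorem:Generalized_Isoperimetric_Inequality} then forbids such a $Y$ w.o.p., delivering the contradiction. The argument for blue hypergraphs is verbatim the same after exchanging the roles of the colors.

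The main obstacle will be the cancellation book-keeping above, particularly for small values of $r$: one has to extract enough extra cancellation from every strongly adjacent pair $\{R, B\}$ appearing in $\gamma$ to push $\mathrm{Cancel}(Y)/|Y|$ past the threshold $4(\tfrac{1}{3} + \varepsilon)$. A secondary point is verifying that $Y$ truly qualifies as a diagram with $K$-small legs for some uniform $K$, which requires controlling the non-planar identifications produced by multiple strongly adjacent pairs sharing cells along $\gamma$.
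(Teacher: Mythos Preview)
Your proposal contains a genuine structural gap. You set $Y=\mathrm{Car}(\gamma)$ and then wish to invoke Theorem~\ref{Theorem:Generalized_Isoperimetric_Inequality}, but that theorem applies only to \emph{diagrams with $K$-small legs}, i.e.\ complexes built from a reduced van Kampen \emph{disc} together with bounded attachments. The carrier of a hypergraph cycle is an annular strip of squares, with no disc basis at all; your remark that ``one checks'' it has the required form is exactly the point that fails. Relatedly, for the bare carrier of a cycle of length $n$ one has $|Y|=n$ and $\mathrm{Cancel}(Y)=n$ (each of the $n$ crossing edges contributes $1$), so $\mathrm{Cancel}(Y)/|Y|=1$, well below the threshold $4/3$ needed at $d$ close to $1/3$. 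Adding the blue partners as you suggest raises both numerator and denominator by $r$ and still leaves the ratio near $1$; the promised ``finer case analysis'' would have to manufacture an extra $\tfrac{1}{3}|Y|$ of cancellation from $r$ strongly adjacent pairs, and you give no mechanism for this. (Your claim that $\gamma$ is forced to continue into the blue partner $B$ is also unjustified: at the midpoint of the distinguished edge $e$ the hypergraph has edges into every $2$-cell containing $e$, and $\gamma$ may proceed into any of them.)

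The paper avoids all of this by passing from the carrier to a \emph{collared diagram}: a van Kampen disc whose external $2$-cells are the carrier of a segment of $\Lambda$, with interior cells filling in the region the cycle bounds (Definition~\ref{def:coll}, Theorem~\ref{Theorem:Existence_of_collared_diagram}). To this disc one then applies the \emph{adding horns} procedure (Definition~\ref{Definition:Adding_horns}), gluing on the missing strongly adjacent partner of each distinguished external cell; the result is a genuine diagram with $K$-small legs (the horns are the legs, each a single $2$-cell). The count is now straightforward: each regular external cell contributes one external edge, each strongly adjacent pair contributes two external edges but two cells, and there may be further interior cells, so $|E|\ge|\tilde\partial E|$. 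Combined with $|\tilde\partial E|>\tfrac{4}{3}|E|$ from Theorem~\ref{Theorem:Generalized_Isoperimetric_Inequality} this is an immediate contradiction, with a short parity argument (Lemma~\ref{Lemma:Pairity_of_the_boundary}) to dispose of the single small exceptional case. The interior filling is what makes the disc basis exist and what pushes the cell count above the boundary length; without it your approach cannot close.
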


To investigate standard and red hypergraphs we need a notion of a collared diagram. Our definition is a slight modification of the one introduced by Ollivier and Wise in \cite[Definition 3.2]{ow11}.

\begin{definition}\label{def:coll}(collared diagram)
Let $D$ be a Van Kampen with the following properties:
\begin{enumerate}
\item there is an external $2$-cell $C$ called a corner of $D$
\item there is a segment $\lambda \rightarrow D \rightarrow \cay$ of a hypergraph (standard, red or blue) of length at least $2$
\item the first and the last edge of $\lambda$ lie in $C$, and no other edge lies in $C$
\item $\lambda$ passes through every other external $2$-cell of $D$ exactly once
\item $\lambda$ does not pass through any internal $2$-cell of $D$
\end{enumerate}
We call $D$ a \textit{collared diagram} and $\lambda$ a \textit{collaring segment}. We also say that $D$ is \textit{cornerless} if the first and the last edge of $\lambda$ coincide in $C$ (in which case the hypergraph cycles).
\end{definition}

Now we will use the additional color-structure of a Painted Cayley complex to perform the following procedure of adding horns to a diagram.

\begin{definition}[adding horns procedure]\label{Definition:Adding_horns}
Let $\mathcal{D}$ be a van Kampen diagram in $\cay$. Let $\partial_2(\mathcal{D})$ be the set of all 2-cells in $\mathcal{D}$ that contain a boundary edge of $\mathcal{D}$. Let $\partial_2^s(\mathcal{D})$ be the set of all distinguished 2-cells $\mathfrak{c}$ in $2-\partial(\mathcal{D})$ such that a 2-cell $\mathfrak{c}'$, that is strongly adjacent to $\mathfrak{c}$, does not belong to $\mathcal{D}$.

We define a \textit{diagram with horns} $\mathcal{D}$ in the following way: for each $\mathfrak{c} \in \partial_2(\mathcal{D})$ we glue to a diagram $\mathcal{D}$ a 2-cell $\mathfrak{c}'$ that is strongly adjacent to $\mathfrak{c}$ (gluing is in a way that these 2-cells are glued in $\cay$).
\end{definition}

If that will be clear we will call diagrams with horns just diagrams, for shortness.

\begin{lem}\label{Lemma:Pairity_of_the_boundary}
Let $\mathcal{D}$ be a diagram with horns. Then $|\tilde{\partial}(\mathcal{D})|$ (the generalized boundary length) is an even number.
\end{lem}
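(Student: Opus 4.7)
The plan is to show that $|\tilde{\partial}(\mathcal{D})|$ is automatically even for any $2$-complex built out of square cells, without really using the horn structure at all. The approach is to expand the definition and invoke the fact that every attaching map has length $4$.

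First I would write out
\begin{equation*}
|\tilde{\partial}(\mathcal{D})| \;=\; \sum_{e \in \mathcal{D}^{(1)}} \bigl(2 - \deg(e)\bigr) \;=\; 2\,|\mathcal{D}^{(1)}| \;-\; \sum_{e \in \mathcal{D}^{(1)}} \deg(e).
\end{equation*}
Next I would observe that in the square model every $2$-cell (including every horn $\mathfrak{c}'$ glued in during the adding horns procedure) is attached along a cyclically reduced word of length $4$, so each $2$-cell contributes exactly $4$ to the total edge-degree sum. Hence
\begin{equation*}
\sum_{e \in \mathcal{D}^{(1)}} \deg(e) \;=\; 4\,|\mathcal{D}|,
\end{equation*}
and substituting back gives
\begin{equation*}
|\tilde{\partial}(\mathcal{D})| \;=\; 2\,|\mathcal{D}^{(1)}| - 4\,|\mathcal{D}|,
\end{equation*}
which is manifestly an even integer.

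Equivalently, one could phrase the argument through the identity recorded right after Definition~1.4, namely $|\tilde{\partial}(\mathcal{D})| + 2\,\mathrm{Cancel}(\mathcal{D}) = 4|\mathcal{D}|$: both $4|\mathcal{D}|$ and $2\,\mathrm{Cancel}(\mathcal{D})$ are even, so their difference must be as well. I do not expect any obstacle here; the lemma is essentially a parity bookkeeping statement. The role of the ``horn'' construction is irrelevant to the parity count, because horns are themselves square $2$-cells attached (across their two shared edges) to the diagram $\mathcal{D}$, and so they enter both sides of the counting identity symmetrically and preserve the relation $\sum_{e}\deg(e) = 4|\mathcal{D}|$.
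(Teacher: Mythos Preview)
Your proof is correct and is essentially the same argument as the paper's, just expressed through the explicit formula rather than phrased inductively. The paper says that each $2$-cell contributes an even number (namely $4$) of edges and each identification of two edges reduces $|\tilde{\partial}(\mathcal{D})|$ by $2$; your computation $|\tilde{\partial}(\mathcal{D})| = 2|\mathcal{D}^{(1)}| - 4|\mathcal{D}|$ is the closed-form version of that same counting.
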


\begin{proof}
The statement results from the fact that every 2-cell of $\mathcal{D}$ has even number of edges and every gluing of 2-cells along an edge decreases the generalized boundary length by 2.
\end{proof}

\begin{rem}\label{Remark:Shape_of_diagrams_with_horns}
A diagram with horns is a union of a disc diagram $D$ and a set $Z$ of $2$-cells, each of them being adjacent to the boundary of $D$ along at least one vertex and such that no two elements of $Z$ share an edge.
\end{rem}

\begin{proof}
The only thing that we need to prove is that no two elements of $Z$ share an edge. This results by Proposition \ref{Proposition:Special_cells}.
\end{proof}

\begin{cor}[of Remark {\ref{Remark:Shape_of_diagrams_with_horns}}]
Collared diagrams with horns w.o.p. satisfy the assumptions of Theorem \ref{Theorem:Generalized_Isoperimetric_Inequality}.
\end{cor}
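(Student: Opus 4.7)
The plan is to verify directly that a collared diagram with horns $\mathcal{D}'$, produced from a collared diagram $\mathcal{D}$ via the adding horns procedure (Definition \ref{Definition:Adding_horns}), fits Definition \ref{Definition:Diagram_with_small_legs} of a diagram with $K$-small legs with $K=1$, and is fulfillable by $R$. Once this is done, Theorem \ref{Theorem:Generalized_Isoperimetric_Inequality} applies verbatim.

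First I would take $Z := \mathcal{D}$ to be the disc basis: it is by construction a (reduced) van Kampen diagram in $\cay$. The legs $Z_i$ I would take to be the individual horns attached in the procedure, each consisting of a single 2-cell $\mathfrak{c}'$ of $\cay$ strongly adjacent to some 2-cell $\mathfrak{c}\in\partial_2^s(\mathcal{D})$. Thus each $Z_i$ is connected, and $|Z_i|=1\leq K=1$, matching the size requirement trivially.

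The heart of the verification is carried by Remark \ref{Remark:Shape_of_diagrams_with_horns}, which describes the shape of $\mathcal{D}'$ as a disc together with a set of 2-cells each touching the boundary of the disc in at least one vertex, with no two of them sharing an edge. The first half of this description gives, for each leg $Z_i$, an external vertex of $Z$ contained in $Z_i$, which is exactly the second condition of Definition \ref{Definition:Diagram_with_small_legs}. The second half of the description implies that every edge of $\mathcal{D}'$ lies in at most one leg, hence a fortiori in at most two, which is the remaining condition. Note that the nontrivial content of this remark is extracted from Proposition \ref{Proposition:Special_cells}, so this is where the w.o.p.\ hypothesis is used.

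Finally, $\mathcal{D}'$ is by construction a subcomplex of $\cay$; the inclusion is locally injective around edges, so $\mathcal{D}'$ is fulfillable by $R$. Therefore all hypotheses of Theorem \ref{Theorem:Generalized_Isoperimetric_Inequality} are met for $\mathcal{D}'$ with $K=1$, and the conclusion follows. I do not anticipate any real obstacle: once Remark \ref{Remark:Shape_of_diagrams_with_horns} is in hand the identification of the disc basis and the legs is entirely formal, and the only subtlety worth explicitly flagging in the write-up is that $K=1$ suffices because each horn is a single 2-cell.
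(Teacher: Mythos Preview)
Your proposal is correct and follows exactly the approach the paper intends: the paper states this corollary with no explicit proof, relying on Remark~\ref{Remark:Shape_of_diagrams_with_horns} to immediately place collared diagrams with horns inside the class of diagrams with $K$-small legs (with $K=1$), which is precisely what you spell out.

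One small technical correction: your final sentence asserts that $\mathcal{D}'$ is ``by construction a subcomplex of $\cay$''. This is not quite right, since a van Kampen diagram (and hence a collared diagram) comes only with a combinatorial map to $\cay$ which need not be injective. The correct justification for fulfillability is that the map $\mathcal{D}'\to\cay/G$ is locally injective around edges: this holds on the disc basis $\mathcal{D}$ because it is a reduced van Kampen diagram, and it persists after attaching horns because each horn is glued along its distinguished edges exactly as the corresponding $2$-cells are glued in $\cay$. This is a wording issue only; your conclusion stands.
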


The following theorem shows the relation between collared diagrams and hypergraphs

\begin{theo}[reformulation of {\cite[Theorem 3.5]{ow11}}]\label{Theorem:Existence_of_collared_diagram}
Let $\Lambda$ be a hypergraph in $\cay$. The following conditions are equivalent:
\begin{enumerate}
\item $\Lambda$ is an embedded tree.
\item There is no collared diagram collared by a segment of $\Lambda$.
\end{enumerate}
\end{theo}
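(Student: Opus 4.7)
The plan is to establish the two implications separately, using that $\cay$ is simply connected (so closed loops bound van Kampen diagrams) together with a minimality argument to produce the collared diagram on demand. The proof should work uniformly for standard, red and blue hypergraphs, since the only difference between them lies in how edges sit inside individual $2$-cells, and this local combinatorics does not affect the disc-filling argument.

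For $(1) \Rightarrow (2)$ I would argue by contrapositive. Suppose a collared diagram $D$ exists, with collaring segment $\lambda \subset \Lambda$ and corner $C$. If $D$ is cornerless, the first and last edges of $\lambda$ coincide, so $\lambda$ closes up to a nontrivial cycle in $\Lambda$, and $\Lambda$ cannot be a tree. If $D$ has a genuine corner, the first and last edges of $\lambda$ are distinct edges of $\Lambda$ whose images under $\varphi$ both meet the $2$-cell $C$; hence $\varphi$ is not injective on $\lambda$ and $\Lambda$ is not embedded.

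For $(2) \Rightarrow (1)$ I would also argue by contrapositive, splitting into two cases. If $\Lambda$ carries a cycle, pick a shortest such cycle $\gamma$; its $\varphi$-image is a closed loop in $\cay$, which by simple connectedness bounds a van Kampen diagram $D$ of minimal area. Minimality of $\gamma$ forces it to pass through each $2$-cell of its carrier exactly once, and minimality of $D$ prevents $\gamma$ from traversing any internal $2$-cell, since such a $2$-cell could be excised to shrink $D$. The resulting pair $(D,\gamma)$ is a cornerless collared diagram. If instead $\Lambda$ fails to be embedded, take a shortest segment $\lambda$ whose first and last edges are distinct but share a $2$-cell $C$, concatenate with an arc inside $C$ joining the midpoints of the two dual edges, and fill the resulting loop by a van Kampen diagram of minimal area; an analogous minimality argument produces a collared diagram with corner $C$.

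The main obstacle I anticipate is verifying conditions (3)--(5) of Definition \ref{def:coll} for the resulting diagram, and in particular showing that $\lambda$ hits each external $2$-cell only once and bypasses every internal $2$-cell. These are exactly the properties the minimality of $\gamma$ (or $\lambda$) and of $D$ are designed to secure, but one must check carefully that any attempted excision of an internal $2$-cell together with a rerouting of $\lambda$ through the opposite edges strictly decreases the complexity measure being minimized. Once these routine surgery arguments are in place, the proof reduces to the original argument of \cite[Theorem 3.5]{ow11}, now applied verbatim to the hypergraphs of Definition \ref{Definition:Red_hypergraph}.
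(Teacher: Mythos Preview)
Your sketch is essentially correct and follows the argument of \cite[Theorem 3.5]{ow11}. Note, however, that the paper does \emph{not} supply its own proof of this statement: it is recorded as a reformulation of \cite[Theorem 3.5]{ow11}, and the only thing the paper adds is the remark (in the paragraph following the theorem) that the antipodal-midpoint property of standard hypergraphs is nowhere used in the Ollivier--Wise argument, so the same proof goes through verbatim for the red and blue hypergraphs of Definition~\ref{Definition:Red_hypergraph}. In that sense there is nothing to compare: your proposal is a summary of the very proof the paper is citing, and your final sentence already says as much.

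One small point of precision: in your $(1)\Rightarrow(2)$ direction with a genuine corner, you don't need to argue that the images of the two edges of $\lambda$ intersect as segments; by the definition of ``embeds'' used here (Definition~\ref{Definition:Standard_hypergraph}), having two distinct hypergraph edges mapped into the same $2$-cell $C$ already certifies that $\Lambda$ is not embedded. Your surgery/minimality outline for $(2)\Rightarrow(1)$ is the right one, and the verifications of conditions (3)--(5) that you flag as the ``main obstacle'' are indeed the routine part of the Ollivier--Wise proof.
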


There is a definition of a diagram collared by a hypergraph in \cite[Definition 3.11]{ow11}. However, in the  paper \cite{ow11} this definition was stated for a general $l$-gon complex and for the standard hypergraphs that join antipodal midpoints of faces they are passing through. Definition \ref{Definition:Collared_by_hypergraph_path_diagram} is a modification of \cite[Definition 3.11]{ow11}: we use it for $l=4$ and for hypergraphs defined in a different way, but the fact the hypergraph joins the antipodal midpoints of edges in $\cay$ is not necessary to provide the proof of Theorem \ref{Theorem:Existence_of_collared_diagram}, in fact the proof can be repeated, as well, in case of colored hypergraphs.

The next theorem is motivated by Theorem \ref{Theorem:Existence_of_collared_diagram} and shows the relation between collared diagrams with horns and colored hypergraphs.

\begin{theo}\label{Theorem:Collared_diagram_with_horns}
Let $\Lambda$ be a red or blue hypergraph in $\cay$. If \item $\Lambda$ is not an embedded tree, then there is a collared diagram with horns collared by a segment of $\Lambda$.
\end{theo}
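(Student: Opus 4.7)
The plan is to mirror the standard argument for Theorem \ref{Theorem:Existence_of_collared_diagram} (i.e.\ the proof of \cite[Theorem 3.5]{ow11}) and then pass from a collared diagram to a collared diagram with horns via the procedure of Definition \ref{Definition:Adding_horns}.

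First I would unpack what it means for a red or blue hypergraph $\Lambda$ to fail being an embedded tree: either the abstract graph $\Lambda$ contains a cycle, or the map $\varphi\colon\Lambda\to\cay$ identifies two distinct edges of $\Lambda$ inside a single $2$--cell of $\cay$. In both cases one extracts a finite hypergraph segment $\lambda$ whose first and last edges (as $1$--cells of $\cay$) lie in a common $2$--cell $C$, so that the image of $\lambda$ together with a short path inside $C$ forms a closed immersed loop in $\cay$. This is literally the first step in the proof of Theorem~\ref{Theorem:Existence_of_collared_diagram}; the only place the color of $\Lambda$ intervenes is in the local description of how the segment enters and leaves each $2$--cell along the way, but this does not affect the extraction of the loop.

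Next, since $\cay$ is simply connected, the loop bounds a van Kampen diagram; among all such fillings choose $D$ of minimal area, with $C$ serving as the corner. A standard minimality argument then forces the conclusions (2)--(5) of Definition~\ref{def:coll}: if $\lambda$ revisited a $2$--cell of $D$, or entered an internal $2$--cell, one could surger $D$ along the offending subsegment to produce a strictly smaller filling of a (possibly shorter) loop, contradicting minimality. I would carry out exactly the same surgeries as in \cite[Theorem~3.5]{ow11}, checking that they remain valid for each of the three local patterns by which a red or blue hypergraph may cross a $2$--cell as listed in Definition~\ref{Definition:Red_hypergraph}. This step produces a genuine collared diagram $D$ collared by $\lambda$.

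Finally, to upgrade $D$ to a collared diagram with horns, I would apply the horn-adding procedure of Definition~\ref{Definition:Adding_horns}: for every distinguished $2$--cell $\mathfrak{c}\in\partial_2^s(D)$ glue in its strongly adjacent partner $\mathfrak{c}'$ from $\cay$. Proposition~\ref{Proposition:Special_cells}(2) guarantees that distinct horns glue in along at most a vertex, so the construction is well-defined and produces a diagram with horns in the sense of Remark~\ref{Remark:Shape_of_diagrams_with_horns}. The added horns sit strictly outside the disc basis $D$ and do not contain any edge of $\lambda$, so $\lambda$ continues to collar the enlarged diagram and conditions (1)--(5) of Definition~\ref{def:coll} are preserved.

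The main technical obstacle I anticipate is the minimality step: because a colored hypergraph can make several different kinds of ``turns'' inside a red or blue $2$--cell, the surgery that reduces area when $\lambda$ revisits a $2$--cell requires a modest case analysis to ensure that the two subsegments can be recombined into a shorter hypergraph segment. The other potential nuisance is bookkeeping: one must verify that after adding horns the segment $\lambda$ still passes only through external $2$--cells of the basis $D$ (which it does automatically since the horns were added outside $D$), and that the horns themselves do not accidentally coincide with $C$ or with each other, both of which are ruled out by Proposition~\ref{Proposition:Special_cells}.
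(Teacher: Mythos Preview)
Your proposal is correct and follows essentially the same approach as the paper: obtain a collared diagram from Theorem~\ref{Theorem:Existence_of_collared_diagram} (whose proof for colored hypergraphs the paper simply asserts goes through verbatim from \cite[Theorem~3.5]{ow11}), then apply the horn-adding procedure of Definition~\ref{Definition:Adding_horns}. The paper's own proof is two sentences to this effect; your additional case analysis for the minimality step and the bookkeeping about horns not interfering with the collaring conditions are more careful than what the paper provides, but not a different strategy.
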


\begin{proof}
By Theorem \ref{Theorem:Existence_of_collared_diagram} we know that there exists collared diagram collared by a segment of $\Lambda$. We obtain the statement of Theorem \ref{Theorem:Collared_diagram_with_horns} by applying the procedure of adding horns to a diagram described in Definiton \ref{Definition:Adding_horns}
\end{proof}

\begin{proof}[Proof of Theorem \ref{Theorem:Red_hypergraphs_are_trees}]

Suppose, on the contrary, that there is a red or blue hypergraph that is not an embedded tree. By Theorem \ref{Theorem:Collared_diagram_with_horns} we know that there is a collared diagram with horns $E$ collared by some segment $\lambda$. Let $n= |\parti E|$.  For $\varepsilon < 2\left(\frac{1}{3} - d \right)$ by Theorem \ref{Theorem:Generalized_Isoperimetric_Inequality} we have w.o.p.

\begin{equation}\label{nier1}
n=|\parti E| \geq 4|E|(1-2d-\varepsilon) > \frac{4}{3}|E|
\end{equation}

There are two possibilities: either $E$ is cornerless or not. First consider the case where $E$ is cornerless. Then every external $2$-cell, which is not red or blue has exactly one external edge. Each pair of strongly adjacent 2-cells gives a contribution to $|\parti E|$ equal $2$, independently of the way the blue or red 2-cell is glued to the diagram  $E$. Hence $|E| \geq n$. By (\ref{nier1}) we know that with overwhelming probability all collared cornerless diagrams with horns satisfy:

\begin{equation}\label{nier2}
n > \frac{4}{3}n,
\end{equation}
which is a contradiction. Therefore, with overwhelming probability there are no such diagrams.

Let us now consider the case where the diagram $E$ is not cornerless. The only difference between the previous case is that a corner can have two external edges. Therefore $|E| \geq n-1$. We have two possibilities $|E| \geq n$ or $|E|=n-1$. If $|E| \geq n$ we again obtain (\ref{nier2}), which is a contradiction. Hence, the only remaining case is where $|\parti E| = n$ and $|E| = n-1$. Again we use (\ref{nier1}) to obtain:

$$n > \frac{4}{3}(n-1).$$

It can be easily seen that for $n > 3$ this is not possible. So we only have to exclude the diagram $|E| = 2, |\parti E|=3$. But there are no diagrams with odd generalized boundary length, according to Lemma \ref{Lemma:Pairity_of_the_boundary}.
\end{proof}

\begin{lem}\label{Lemma:Connected_components}
Suppose that a red hypergraph or a blue hypergraph $\Lambda$ is an embedded tree in $\cay$. Then $\cay - \Lambda$ consists of two connected components.
\end{lem}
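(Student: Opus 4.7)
The argument parallels \cite[Lemma 2.3]{ow11} for standard hypergraphs, the new ingredient being a local analysis at the $2$-cells where $\Lambda$ cuts off corners rather than passing between opposite midpoints. Fix a basepoint $x_{0} \in \cay \setminus \Lambda$, and for $x \in \cay \setminus \Lambda$ define the ``side'' $s(x) \in \{0,1\}$ to be the parity of the number of transverse intersections with $\Lambda$ of a piecewise-linear path in $\cay$ from $x_{0}$ to $x$. Once $s$ is shown to be path-independent, it is locally constant on $\cay \setminus \Lambda$; both values are realised, since the two sides of any chord of $\Lambda$ inside a $2$-cell meet $\cay \setminus \Lambda$; and within each level set any two points can be joined by a path avoiding $\Lambda$, obtained from an arbitrary connecting path by a local perturbation inside each traversed $2$-cell. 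This gives exactly two connected components.

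The core of the proof is well-definedness of $s$. Since $\cay$ is simply connected, two candidate paths (traversed with opposite orientations) form a null-homotopic loop, so it suffices to check that every elementary homotopy supported in a single $2$-cell $c$ preserves the parity of the intersection count with $\Lambda \cap c$. Equivalently, the open regions of $c \setminus \Lambda$ must admit a $2$-colouring in which each chord of $\Lambda$ separates regions of opposite colour.

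For a regular or oppositely coloured cell (a regular or blue cell, in the red-hypergraph case) the chords of $\Lambda$ through $c$ cross at the centre of $c$ in the standard way, and the usual alternating colouring of up to four quadrants works. For a same-coloured cell (a red cell, in the red-hypergraph case) each chord of $\Lambda \cap c$ instead cuts off one of the four corners of $c$; a short case check shows that the resulting collection of at most three open regions is $2$-colourable, with both ``corner'' regions receiving the colour opposite to the ``middle'' region. The main obstacle is the subcase in which the two distinguished edges of a red cell $c$ are opposite rather than adjacent: the four admissible chords then form a $4$-cycle inside $c$, which would obstruct a consistent colouring, but the embedded-tree hypothesis on $\Lambda$ forbids this cycle, so at most three of the four chords lie in $\Lambda$, and each remaining configuration is $2$-colourable. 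Once this local verification is done, the global argument using simple connectedness of $\cay$ proceeds exactly as in \cite[Lemma 2.3]{ow11}.
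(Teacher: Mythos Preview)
Your approach matches the paper's, which simply states that the proof is analogous to \cite[Lemma 2.3]{ow11}; you have additionally spelled out the local modification needed at same-coloured cells. One small sharpening: in your opposite-distinguished-edges subcase, since $\Lambda$ is by definition a \emph{connected component} of $\Gamma$, if any one of the four corner-chords lies in $\Lambda$ then all four do, so the embedded-tree hypothesis forces $\Lambda$ to miss such a cell entirely and the local check there is vacuous (rather than a $\leq 3$-chord case analysis).
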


\begin{proof}
The proof is analogous to the proof of \cite[Lemma 2.3]{ow11}.
\end{proof}

\section{Metrically proper action on a space with walls}

In this section our goal is to prove the following

\begin{theo}\label{Theorem:Proper_action}
Let $G$ be a random group in the square model at density $d < \frac{3}{10}$. Let $\cay$ be the Cayley complex of $G$. For $x, y \in \cay$ we denote by $d_{\emph{wall}}(x,y)$ the number of hypergraphs $\Lambda$ (standard, red and blue) such that $x$ and $y$ lie in the different connected components of $\cay - \Lambda$. Then w.o.p.

$$ d_{\emph{wall}}(x,y) \geq \lfloor \frac{1}{15} d_{\cay^{(1)}}(x,y) \rfloor,$$

where $d_{\cay^{(1)}}$ denotes the edge metric on the 1-skeleton of $\cay$.
\end{theo}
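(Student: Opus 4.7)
I would fix a geodesic edge-path $\gamma = e_1 e_2 \cdots e_L$ from $x$ to $y$ in $\cay^{(1)}$, where $L = d_{\cay^{(1)}}(x,y)$, and observe that each edge $e_i$ is dual to exactly one standard, one red, and one blue hypergraph, contributing $3L$ (hypergraph, edge) incidences in total. By Lemma \ref{Lemma:Connected_components} together with the corresponding \cite[Lemma 2.3]{ow11} for the standard case, a hypergraph $\Lambda$ separates $x$ from $y$ in $\cay$ if and only if the number of edges of $\gamma$ dual to $\Lambda$ is odd, so $d_{\text{wall}}(x,y)$ counts exactly those hypergraphs meeting $\gamma$ an odd number of times.

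The heart of the proof is to control the non-separating hypergraphs. For each hypergraph $\Lambda$ meeting $\gamma$ at two consecutive crossings $e_i, e_j$ (with no edge of $\gamma$ strictly between them dual to $\Lambda$), I form a closed loop by concatenating the subpath $\gamma[e_i,e_j]$ with a path running along one side of the carrier of the hypergraph segment from $e_i$ to $e_j$. Filling this loop in $\cay$ gives a $2$-complex $Y_\Lambda$ containing the carrier cells. For red and blue hypergraphs, each coloured $2$-cell in the carrier drags along its strongly adjacent partner as a ``horn'', so that $Y_\Lambda$ becomes a diagram with $K$-small legs in the sense of Definition \ref{Definition:Diagram_with_small_legs} for some uniform $K$. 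Theorem \ref{Theorem:Generalized_Isoperimetric_Inequality} then gives $|\bound Y_\Lambda| > 4(1-2d-\varepsilon)|Y_\Lambda|$, and since $d < \tfrac{3}{10}$ the coefficient $4(1-2d-\varepsilon)$ exceeds $\tfrac{8}{5}$ with room to spare.

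At the same time, the geodesicity of $\gamma$ prevents the bypass through the carrier from being shorter than $\gamma[e_i,e_j]$, which forces the carrier length $t_\Lambda$ to satisfy $t_\Lambda \geq j - i$ up to a uniform additive constant. Pairing $|Y_\Lambda| \geq t_\Lambda$ with the isoperimetric upper bound $|Y_\Lambda| < \tfrac{5}{8}|\bound Y_\Lambda|$ (whose right-hand side is essentially $\tfrac{5}{8}((j-i)+t_\Lambda + O(1))$) yields two-sided control of $t_\Lambda$ in terms of $j-i$. Aggregating over all non-separating hypergraphs meeting $\gamma$ and using Proposition \ref{Proposition:Special_cells} to bound how often a single $2$-cell of $\cay$ can simultaneously lie in different $Y_\Lambda$'s, I would bound the total non-separating incidence count $\sum_{\Lambda \text{ non-sep.}} k_\Lambda$ from above by a constant multiple of $L$. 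Since separating and non-separating incidences together sum to $3L$, a sufficiently tight bound of this form, combined with the fact that no single separating hypergraph can contribute too many crossings without itself violating the isoperimetric inequality, forces $d_{\text{wall}}(x,y) \geq \lfloor L/15 \rfloor$.

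The main obstacle is the uniform construction and analysis of the diagrams $Y_\Lambda$: the three hypergraph types behave differently on coloured $2$-cells, and one must verify case-by-case that attaching horns yields a $2$-complex satisfying the hypotheses of Theorem \ref{Theorem:Generalized_Isoperimetric_Inequality}. Pinning down the precise constant $\tfrac{1}{15}$ requires careful bookkeeping of how the bigons coming from distinct non-separating hypergraphs overlap along $\gamma$, as well as of the maximal number of crossings a single hypergraph can make with $\gamma$; any looseness in these estimates directly worsens the constant and could push the required density threshold below $\tfrac{3}{10}$.
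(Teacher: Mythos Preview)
Your approach is fundamentally different from the paper's, and as written it has a genuine gap.

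The paper does not proceed by a global incidence count. Instead it proves a purely local statement (Theorem \ref{Theorem:Many_good_hypergraphs}): among any $15$ consecutive edges $e_{-7},\dots,e_{7}$ of a geodesic $\gamma$, there is a hypergraph dual to one of these edges that crosses $\gamma$ \emph{exactly once}. The deduction of Theorem \ref{Theorem:Proper_action} is then one line: chop $\gamma$ into blocks of $15$ edges and collect one separating hypergraph per block. The constant $\tfrac{1}{15}$ is simply this block size, not the output of any averaging.

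The engine of the local argument is not the isoperimetric inequality applied to a single bigon, but the interplay of the three hypergraph types together with a rigidity result for $2$-collared diagrams. If all three hypergraphs through $e_0$ return to $\gamma$, Lemma \ref{Lemma:Intersect_once_or_at_least_2_distinguished_cells} produces a segment $\Lambda_{sx}$ whose carrier contains two distinguished $2$-cells not sharing an edge. One then picks a \emph{regular} $2$-cell $\mathfrak c$ in this carrier (between the distinguished cells, or among the $3$rd--$5$th cells if the carrier is long) and looks at the transverse standard hypergraph $\Lambda^{\mathfrak c}$ through $\mathfrak c$. Lemma \ref{Lemma:Regular_Cell_Intersection_Once}, which rests on the fact that at $d<\tfrac{3}{10}$ every $2$-collared diagram is a pair of strongly adjacent cells (Lemma \ref{Lemma:Shape_of_2_collared}), forces $\Lambda^{\mathfrak c}$ and $\Lambda$ to meet only in $\mathfrak c$; a short case analysis using Lemma \ref{Lemma:Geodesic_breaks_away_from_carrier} then shows $\Lambda^{\mathfrak c}$ meets $\gamma$ exactly once, at an edge within distance $7$ of $e_0$.

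The gap in your plan is the passage from per-bigon estimates to a count of \emph{distinct} separating hypergraphs. Applying Theorem \ref{Theorem:Generalized_Isoperimetric_Inequality} to an individual $Y_\Lambda$ only says the carrier length $t_\Lambda$ is comparable to the geodesic span $j-i$; it does not stop many hypergraphs from producing many small bigons, nor does it bound how many times a single separating hypergraph may cross $\gamma$. Your aggregation step (``bounding how often a single $2$-cell lies in different $Y_\Lambda$'s'') and your multiplicity claim (``no single separating hypergraph can contribute too many crossings'') are precisely the hard points, and neither follows from Proposition \ref{Proposition:Special_cells} or from the isoperimetric inequality alone. Without a substitute for the $2$-collared diagram analysis --- which is where the threshold $\tfrac{3}{10}$ actually enters --- the counting argument does not close, and there is no mechanism in your outline that would single out the constant $\tfrac{1}{15}$.
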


The meaning of the foregoing theorem comes from the following

\begin{proof}[Proof of Theorem \ref{Theorem:Cubulating}]
By Lemma \ref{Lemma:Connected_components} hypergraphs are embeddes trees, and by Lemma \ref{Lemma:Connected_components} and \cite[Lemma 2.3]{ow11} they separate the Cayley complex into two connected components, so they provide a structure of a space with walls on it. Every discrete group acts properly on its Cayley complex, and by Theorem \ref{Theorem:Proper_action} the wall metric and edge metric on the Cayley complex are equivalent. Therefore a group acts properly on a space with walls. Then by \cite{walls} there is an action of a random group on a CAT(0) cube complex.
\end{proof}

Before we provide the proof of Theorem \ref{Theorem:Proper_action} we need several facts about the geometry of hypergraphs. For simplicity, until the end of this section we will denote by $\cay$ the painted Cayley complex of a random group in the square model at density $d<\frac{3}{10}$ and by $\cay^{(1)}$ the 1-skeleton of $\cay$ (i.e. the Cayley graph of a random group).

\begin{lem}\label{Lemma:Adjacent_to_special_cell}
Let $D$ be a distinguished 2-cell in $\cay$, let $E$ be a 2-cell having exactly one common edge with $D$ and let $F$ be a 2-cell different than $D$ and having at least one common edge with $E$. Then w.o.p. $E$ and $F$ are regular 2-cells.
\end{lem}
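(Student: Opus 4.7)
The plan is to derive contradictions from small subcomplexes of $\cay$ whose cancellation would be forced too large if $E$ or $F$ were distinguished, and then invoke Theorem~\ref{Theorem:Generalized_Isoperimetric_Inequality}. Throughout, write $D'$ for the (unique, by Proposition~\ref{Proposition:Special_cells}(2)) partner of $D$, and $E'$, $F'$ for the putative partners of $E$, $F$ when those are assumed distinguished. Recall that Proposition~\ref{Proposition:Special_cells}(2) bounds the number of edges any third cell can share with a strongly adjacent pair, and in particular forces uniqueness of the strong-adjacency partner.

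First I would treat $E$. Assuming for contradiction that $E$ is strongly adjacent to some $E'$, I would apply Proposition~\ref{Proposition:Special_cells}(2) to the pair $(D, D')$ and third cell $E$ to force $E$ to share no edge with $D'$ (since $E$ already uses its one allowed edge on $D$), apply it to $(E, E')$ and $D$ to force $D$ to share no edge with $E'$, and use uniqueness of partners to obtain $D' \neq E'$. This leaves four distinct cells $D, D', E, E'$, so the subcomplex $Y = D \cup D' \cup E \cup E'$ has $|Y| = 4$ and
$$
\mathrm{Cancel}(Y) \geq 2 + 2 + 1 = 5
$$
(two edges per strong adjacency, one edge between $D$ and $E$; any extra coincidences among these edges only redistribute the degrees without lowering the sum). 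Since $d < \tfrac{3}{10} < \tfrac{5}{16}$, one can pick $\varepsilon > 0$ with $16(d+\varepsilon) < 5$, and Theorem~\ref{Theorem:Generalized_Isoperimetric_Inequality} is violated.

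For $F$ the argument is analogous but one cell larger. Assuming $F$ is strongly adjacent to some $F'$, and having already established that $E$ is regular (so $E \neq D', F'$), I would apply Proposition~\ref{Proposition:Special_cells}(2) to the pairs $(D, D')$ and $(F, F')$ in all necessary directions: ruling out $D' = F$, $D = F'$, $D' = F'$ via uniqueness of partners, and forcing $E$ to share no edge with $D'$ or $F'$. With five distinct cells on hand, the subcomplex $Y' = D \cup D' \cup E \cup F \cup F'$ has $|Y'| = 5$ and
$$
\mathrm{Cancel}(Y') \geq 2 + 2 + 1 + 1 = 6
$$
(the two strong adjacencies plus the edges $D \cap E$ and $E \cap F$; any additional forced overlaps only raise $\mathrm{Cancel}$). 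Picking $\varepsilon > 0$ with $20(d+\varepsilon) < 6$, which is possible exactly because $d < \tfrac{3}{10}$, then contradicts Theorem~\ref{Theorem:Generalized_Isoperimetric_Inequality}.

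The main obstacle will be the bookkeeping in the second step: one must use Proposition~\ref{Proposition:Special_cells}(2) in several directions to confirm that the five cells are pairwise distinct and that no hidden edge identification lowers the bound $\mathrm{Cancel}(Y') \geq 6$. The useful observation that keeps the lower bound safe is that extra shared edges only \emph{increase} $\mathrm{Cancel}$, so one need only verify the forced identifications; and the equality $\tfrac{6}{20} = \tfrac{3}{10}$ is sharp, which is precisely why the density bound in the hypothesis of the lemma takes this form.
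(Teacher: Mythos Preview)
Your proof is correct and follows essentially the same route as the paper: build the four-cell complex $D\cup D'\cup E\cup E'$ (respectively the five-cell complex $D\cup D'\cup E\cup F\cup F'$), verify the cells are pairwise distinct via Proposition~\ref{Proposition:Special_cells}, and contradict Theorem~\ref{Theorem:Generalized_Isoperimetric_Inequality}. The only cosmetic difference is that you phrase the count via $\mathrm{Cancel}(Y)\geq 5$ and $\mathrm{Cancel}(Y')\geq 6$, whereas the paper uses the equivalent generalized-boundary form $|\tilde\partial\mathcal D|\leq 6$ and $|\tilde\partial\mathcal E|\leq 8$; the thresholds $5/16$ and $3/10$ match exactly.
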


\begin{proof}
Suppose, on the contrary, that $E$ is a distinguished 2-cell. Then let $D'$ and $E'$ be the 2-cells strongly adjacent to $D$ and $E$ respectively. By the assumption we know that $D' \neq E$ and $E' \neq D$ (otherwise $D$ and $E$ would be strongly adjacent). By Proposition \ref{Proposition:Special_cells} we know that $D' \neq E'$. Therefore there are two pairs of distinguished 2-cells $\{D, D'\}$ and $\{E, E'\}$ which have at least one common edge. Consider the diagram $\mathcal{D}$ consisting of 2-cells $D, D', E, E'$, glued in the way they are glued in $\cay$. We will show that $|\tilde{\partial} D| \leq 6$. Note, that every diagram $\mathcal{A}$ consisting of two strongly adjacent 2-cells satisfies: $|\mathcal{A}| = 4$ and $|\tilde{\partial} \mathcal{A}| = 4$. Gluing two such diagrams, consisting of disjoint sets of 2-cells, along one edge decreases the generalized boundary length by 2, so $|\tilde{\partial} \mathcal{D}| \leq 6$. Therefore, the diagram $\mathcal{D}$ violates the inequality $|\tilde{\partial} \mathcal{D}| \geq |\mathcal{D}|4(1 - 2d)$, so by Theorem \ref{Theorem:Generalized_Isoperimetric_Inequality} w.o.p. there is no such a diagram $\mathcal{D}$. Therefore $E$ is a regular 2-cell.

Now suppose, on the contrary, that $F$ is a distinguished 2-cell. Let $F'$ be the 2-cell strongly adjacent to $F$. Again by Propistion \ref{Proposition:Special_cells}, shows that $F' \neq D$, $D' \neq F$ and $F' \neq D'$. Moreover, from the previous part of the proof we know that with w.o.p. $E$ is a regular 2-cell, so $F' \neq E$. Hence, there is a diagram $\mathcal{E}$, which is a union of a pair $\{D, D'\}$ of strongly adjacent 2-cells, 2-cell $E$, and a second pair of strongly adjacent 2-cells $\{F, F'\}$. Analogously as above, we can prove that $\mathcal{E}$ satisfies: $|\mathcal{E}| = 5$ and $|\tilde{\partial} \mathcal{E}| \leq 8$. Therefore the diagram $\mathcal{E}$ violates the inequality $|\tilde{\partial} \mathcal{E}| \geq |\mathcal{E}|4(1 - 2d)$, so by Theorem \ref{Theorem:Generalized_Isoperimetric_Inequality} w.o.p. there is no such a diagram $\mathcal{E}$. Hence $F$ is a regular 2-cell.
\end{proof}

Now, we introduce the notion of a diagram collared by hypergraph and path.
\begin{defi}\label{Definition:Collared_by_hypergraph_path_diagram}
Let $\lambda$ be a segment of a hypergraph (red, blue or standard) and let $\gamma$ be an edge-path in $\cay^{(1)}$. Let $D$ be a van Kampen diagram with the following additional properties:
\begin{enumerate}
\item $\lambda$ passes only through external 2-cells of $D$
\item every external edge of $D$ belongs to the boundary of the carrier of $\lambda$ or belongs to $\gamma$.
\end{enumerate}
We call $D$ a \textit{diagram collared by hypergraph and path}. After applying the procedure of adding horns(see Definition \ref{Definition:Adding_horns}) to $D$ we obtain a \textit{diagram with horns collared by hypergraph and path}. We call $\lambda$ a \textit{collaring hypergraph segment} and $\gamma$ a \textit{collaring edge-path}.
\end{defi}

There is a definition of a diagram collared by hypergraphs and paths in \cite[Definition 3.11]{ow11}. In the paper \cite{ow11} this definition was stated for a general polygonal complex and for the hypergraphs that join antipodal midpoints of faces they are passing through. Definition \ref{Definition:Collared_by_hypergraph_path_diagram} is a modification of \cite[Definition 3.11]{ow11}: we use it for $l=4$, one path and one hypergraph segment, and for hypergraphs defined in a different way, but the fact the hypergraph joins the antipodal midpoints is not necessary to provide the proofs of upcoming lemmas. The proofs of lemmas from \cite{ow11} that we will use can be as well repeated for red and blue hypergraphs. The following lemma is then a slight modification of \cite[Lemma 3.17]{ow11}.

\begin{lem}\label{Lemma:Diagram_collared_by_path_and_hypergraph}
Let $\Lambda$ be a hypergraph in $\cay$ and $\gamma$ be an edge path in $\cay^{(1)}$. Suppose, that $\gamma$ intersects $\Lambda$ in two consecutive points $x, y$. Then there is a diagram with horns collared by $\Lambda$ and a segment of $\gamma$ bounded by points $x$ and $y$ (here $\gamma$ is an edge path which starts and ends at``midedge vertices'' corresponding to vertices of $\Lambda$.
\end{lem}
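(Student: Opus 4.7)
The plan is to exploit the simple connectedness of $\cay$ to produce a van Kampen disc whose boundary consists of a portion of $\gamma$ and one side of $\text{Car}(\lambda)$, then glue $\text{Car}(\lambda)$ back on and finally apply the adding--horns procedure. Let $\lambda$ be the subsegment of $\Lambda$ running between the midpoints $x$ and $y$, and let $\alpha$ be the subpath of $\gamma$ from $x$ to $y$. Write $e_0,e_1,\dots,e_k$ for the consecutive $1$--cells dual to $\lambda$ (so $x$ is the midpoint of $e_0$ and $y$ of $e_k$) and $R_1,\dots,R_k$ for the $2$--cells of $\text{Car}(\lambda)$.

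First, I would replace $\lambda$ by a genuine edge--path in the $1$--skeleton. The carrier $\text{Car}(\lambda)$ has two ``sides,'' each obtained by concatenating, for each $R_i$, the boundary arc of $R_i$ lying on that side of $\lambda$ (well defined for regular/blue cells, and likewise for red cells by Definition \ref{Definition:Red_hypergraph}). Pick one such side $\mu$, running from the appropriate endpoint $x'$ of $e_0$ to an endpoint $y'$ of $e_k$. Together with the half--edges from $x$ to $x'$ and from $y'$ to $y$ inside $\alpha$, this exhibits the loop $\alpha \cdot \lambda^{-1}$ as homotopic to an edge loop $\alpha' \cdot \mu^{-1}$ in $\cay^{(1)}$, where $\alpha'$ is the portion of $\alpha$ in $\cay^{(1)}$.

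Second, since $\cay$ is simply connected, the loop $\alpha' \cdot \mu^{-1}$ bounds a reduced van Kampen diagram $D_0$ of minimal area. Glue the $2$--cells $R_1,\dots,R_k$ of $\text{Car}(\lambda)$ back onto $D_0$ along $\mu$; the resulting $2$--complex $D$ has boundary made up of $\alpha$ and of the opposite side of $\text{Car}(\lambda)$. By construction $\lambda$ is a hypergraph segment of $D$ that passes only through $R_1,\dots,R_k$, all of which lie on $\partial D$, giving condition (1) of Definition \ref{Definition:Collared_by_hypergraph_path_diagram}; every external edge of $D$ lies either in $\alpha \subset \gamma$ or in $\partial \text{Car}(\lambda)$, giving condition (2). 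Finally, I would invoke Definition \ref{Definition:Adding_horns} to attach the strongly adjacent partners of each distinguished boundary $2$--cell and obtain the desired diagram with horns.

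The main obstacle is to ensure that $D$ really is a genuine van Kampen diagram in which each $R_i$ remains an external $2$--cell and $\lambda$ does not get pushed into the interior. This could in principle fail if some $R_i$ already appears in $D_0$ along $\mu$, forcing a cancellation upon gluing. This is handled by choosing $D_0$ of minimal area: any cancellation between $D_0$ and $\text{Car}(\lambda)$ would allow us to shorten $D_0$, contradicting minimality. The hypothesis that $x,y$ are \emph{consecutive} intersection points of $\gamma$ with $\Lambda$ is what prevents $\alpha'$ from re-entering $\text{Car}(\lambda)$ transversely and hence rules out any internal collision between $\lambda$ and the interior of the disc.
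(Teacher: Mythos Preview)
Your construction---replace $\lambda$ by one side $\mu$ of its carrier, fill the loop $\alpha'\mu^{-1}$ by a reduced disc $D_0$, glue $\text{Car}(\lambda)$ back on, then add horns---is exactly the Ollivier--Wise construction that the paper invokes by citing \cite[Lemma~3.17]{ow11} and then applying Definition~\ref{Definition:Adding_horns}. In substance you and the paper take the same route; you have simply spelled out what the paper leaves as a reference.

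The one soft spot is your resolution of the obstacle in the last paragraph. Minimality of the area of $D_0$ does not by itself forbid a cell $C$ of $D_0$ from forming a cancelling pair with an adjacent carrier cell $R_i$: removing $C$ changes the boundary word of $D_0$, so nothing is contradicted for the \emph{fixed} loop $\alpha'\mu^{-1}$. What the Ollivier--Wise argument actually does is perform reductions on the glued diagram $D=D_0\cup\text{Car}(\lambda)$: each fold of a cancelling pair $(C,R_i)$ pushes the collaring segment through one cell, but since $\Lambda$ is an embedded tree the resulting immersed collaring path still reduces to the unique segment of $\Lambda$ from $x$ to $y$ and still lies in external cells; after finitely many folds the diagram is reduced and collared as required. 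Likewise, consecutiveness of $x$ and $y$ is not what rules out these collisions; its role is only to ensure that the $\gamma$-arc of the boundary does not itself cross $\Lambda$, so that the final object is collared by a single hypergraph segment and a single path segment as in Definition~\ref{Definition:Collared_by_hypergraph_path_diagram}.
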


\begin{proof}
First we prove that there exists a diagram collared collared by $\Lambda$ and a segment of $\gamma$ bounded by points $x$ and $y$. The proof of this fact is analogous to the proof of \cite[Lemma 3.17]{ow11}. Then we apply a procedure of adding horns (Definition \ref{Definition:Adding_horns}) to that diagram.
\end{proof}

\begin{lem}\label{Lemma:Geodesic_breaks_away_from_carrier}
Let $\gamma$ be a geodesic edge-path in $\cay^{(1)}$ and let $\Lambda$ be a hypergraph in $\cay$. Suppose that there is a finite, nonempty geodesic segment $\gamma' \subset \gamma$, such that the first and the last vertex of $\gamma'$ lie in $\emph{Car}(\Lambda)$ but no other vertices of $\gamma'$ lie in $\emph{Car}(\Lambda)$ and no edge of $\gamma'$ lie in $\emph{Car}(\Lambda)$. Then w.o.p. $\gamma'$ has length 2, and there is the following diagram $\mathcal{D}$ consisting of three 2-cells immersed in $\cay$:

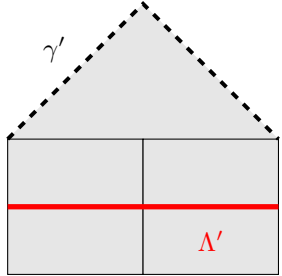
\begin{figure}[h]\label{fig2}
\centering
\begin{tikzpicture}[scale=1.8]

\fill[fill=gray!20] (0,1)--(1,2)--(2,1)--cycle;
\filldraw[draw=black,fill=gray!20] (0,0) rectangle (1,1);
\filldraw[draw=black,fill=gray!20] (1,0) rectangle (2,1);


\path[red, draw,line width=2pt] (0,0.5) -- (2,0.5);

\draw (1.5,0.25) node { ${\color{red} \Lambda'}$};

\draw[draw,line width=1.5pt, dashed] (0,1) -- (1,2) -- (2,1);

\draw (0.5,1.5) node[above left] {$\gamma'$};

\end{tikzpicture}
\caption{The diagram $\mathcal{D}$.}
\end{figure}

Moreover, every 2-cell of $\mathcal{D}$ is a regular 2-cell.
\end{lem}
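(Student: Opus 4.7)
The plan is to pull the configuration apart into a concrete diagram and then squeeze it using Theorem~\ref{Theorem:Generalized_Isoperimetric_Inequality}. Set $k=|\gamma'|$ and pick a minimal-length edge-path $\delta$ in $\text{Car}(\Lambda)^{(1)}$ joining $v_1$ to $v_2$; since $\Lambda$ is an embedded tree (Theorem~\ref{Theorem:Red_hypergraphs_are_trees}), one may arrange that $\delta$ runs along one side of $\text{Car}(\Lambda)$ and meets exactly $t$ consecutive cells $C_1,\ldots,C_t$ along $t$ of their outer edges. The loop $\gamma'\cup\delta^{-1}$ bounds a disc diagram $D_0$ with $s:=|D_0|\ge 1$ filling cells; attaching the $C_i$ to $D_0$ as legs produces a diagram with $1$-small legs $Y$ of size $|Y|=t+s$. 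If a $2$-cell on the boundary of $Y$ turns out to be distinguished, I first invoke Definition~\ref{Definition:Adding_horns} to put myself inside the hypotheses of Theorem~\ref{Theorem:Generalized_Isoperimetric_Inequality}, without changing the essential counts below.

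A direct edge count shows that the only degree-$2$ edges of $Y$ are the $t$ edges of $\delta$, the $t-1$ edges shared between consecutive $C_i$'s, and the internal edges of $D_0$, while the remaining edges all have degree $1$. This yields
\begin{equation*}
|\tilde\partial Y|\;=\;k+t+2.
\end{equation*}
Geodesicity of $\gamma$ gives $k\le t$ because $\delta$ is an alternative edge-path of length $t$ from $v_1$ to $v_2$, and Lemma~\ref{Lemma:Pairity_of_the_boundary} forces $|\tilde\partial Y|$, hence $k+t$, to be even. Counting the $4s$ edge-incidences contributed by the filling cells against the $k+t$ edges of $\partial D_0$ produces a lower bound on $s$: for instance $s=1$ is realizable only when $k+t=4$, while $s=2$ with a single internal shared edge requires $k+t=6$.

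The main step is a finite case analysis using Theorem~\ref{Theorem:Generalized_Isoperimetric_Inequality}. For $d<\tfrac{3}{10}$ one may choose $\varepsilon>0$ with $4(1-2d-\varepsilon)>\tfrac{8}{5}$, so w.o.p.
\begin{equation*}
k+t+2\;>\;\tfrac{8}{5}(t+s).
\end{equation*}
Combined with the parity of $k+t$, the inequality $k\le t$ and the realizability bounds, this rules out every $(k,t,s)$ except $(2,2,1)$: for instance $(3,3,2)$ gives ratio exactly $\tfrac{8}{5}$, $(2,4,2)$ gives $\tfrac{4}{3}$, and $(1,3,1)$ gives $\tfrac{3}{2}$, each violating the required strict bound. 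The surviving configuration $(2,2,1)$ is precisely the diagram $\mathcal{D}$ of Figure~\ref{fig2}. This case analysis is the step I expect to be the most delicate, because the parity, the geodesic inequality and the isoperimetric ratio must be tracked simultaneously, and the positions of $v_1,v_2$ on $\partial\text{Car}(\Lambda)$ have to be analysed in enough generality to make the ``$\delta=$ top path'' reduction legitimate.

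It remains to verify that the three $2$-cells of $\mathcal{D}$ are regular. Assume toward a contradiction that one of them, call it $C$, is distinguished with strong partner $C'$. Proposition~\ref{Proposition:Special_cells}(2), applied to the pair $\{C,C'\}$, forbids $C'$ from sharing an edge with either of the other two cells of $\mathcal{D}$, so $C'$ must glue to $\mathcal{D}$ along exactly the two edges of $C$ that are still free in $\mathcal{D}$. Adjoining $C'$ turns those two edges into degree-$2$ edges and introduces two fresh degree-$1$ edges of $C'$, so $|\tilde\partial|$ stays equal to $6$ while $|Y|$ grows to $4$, dropping the ratio to $\tfrac{3}{2}<\tfrac{8}{5}$ and contradicting Theorem~\ref{Theorem:Generalized_Isoperimetric_Inequality}. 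Hence all three $2$-cells of $\mathcal{D}$ are regular.
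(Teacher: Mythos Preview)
Your overall strategy---build a diagram from $\gamma'$, a path $\delta$ along the carrier, and the carrier cells, then apply Theorem~\ref{Theorem:Generalized_Isoperimetric_Inequality}---is close in spirit to the paper's, but the case analysis has a genuine gap. The triple $(k,t,s)=(4,4,2)$ is \emph{not} excluded by your inequality: it gives $|\tilde\partial Y|=10$ and $|Y|=6$, and $10>\tfrac{8}{5}\cdot 6=9.6$. Your stated realizability bound, coming from the edge count $4s\ge k+t$, only forces $s\ge 2$ when $k+t=8$, so this case slips through. The paper confronts exactly this configuration (in its notation $l=4$, $k+k'=4$, $|\mathcal D|=6$) and disposes of it by a separate geometric argument: the two filling cells cannot share an edge (otherwise their union has diameter $\le 3<4$), and if they do not share an edge then an interior vertex of $\gamma'$ is forced into $\text{Car}(\Lambda)$, contradicting the hypothesis. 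You supply no such argument. One way you could repair this within your framework is to observe that the loop $\gamma'\cup\delta^{-1}$ is an embedded cycle (interior vertices of $\gamma'$ lie outside the carrier while $\delta$ lies inside), so $D_0$ is a genuine disc; Euler's formula then gives the sharper bound $s\ge (k+t)/2-1$, which kills $(4,4,2)$ outright. But as written the proof is incomplete at this point.

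A second, structural issue: you assert that a minimal path $\delta$ in $\text{Car}(\Lambda)^{(1)}$ ``runs along one side'' and has length equal to the number $t$ of carrier cells. This is clean for a standard hypergraph, but for a red or blue hypergraph the carrier bends at isolated distinguished cells (the hypergraph joins adjacent rather than opposite edges there), so one side may contribute two edges and the other none. The paper sidesteps this by building the collared diagram directly from edges $e_0,e_1$ dual to $\Lambda$ via Lemma~\ref{Lemma:Diagram_collared_by_path_and_hypergraph}, and absorbing the colored-cell irregularities into the horn count $k'$; your ad hoc $\delta$ needs more care to handle this. Finally, in your regularity argument, the claim that $C'$ glues along ``exactly the two edges of $C$ that are still free'' does not follow from Proposition~\ref{Proposition:Special_cells}(2)---$C'$ could share an edge of $C$ already of degree~$2$ in $\mathcal D$---though the resulting generalized boundary count $|\tilde\partial|=6$, $|Y|=4$ happens to come out the same either way.
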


\begin{defi}
A diagram $D$ having the same shape as the one presented in Figure \ref{fig2} will be called a \textit{house diagram}. The geodesic segment $\gamma'$ we be called a \textit{roof (of a house diagram)}.
\end{defi}

\begin{proof}
Let $e_0, e_1$ be some edges dual to $\Lambda$ with endpoints in the first and the last vertex of $\gamma'$. By Lemma \ref{Lemma:Diagram_collared_by_path_and_hypergraph} we know that there is a diagram $\mathcal{D}$ with horns collared by the edge-path $\{e_0, \gamma', e_1\}$ and a segment $\Lambda'$ of $\Lambda$. Suppose that the length of this hypergraph segment equals $k$ and the length of $\gamma'$ equals $l$. Let $k'$ denote the number of horns in $\mathcal{D}$. Note that
\begin{equation}
|\tilde{\partial} \mathcal{D}| \leq k+k'+l+2.
\end{equation}
Note that every edge of $\gamma'$ lies in the boundary of some 2-cell in $\mathcal{D}$ that is not in the carrier of $\Lambda$, since otherwise there would be an inner vertex of $\gamma'$ lying in $\emph{Car}(\Lambda)$. Moreover, more than 2 edges of $\gamma'$ cannot lie in the boundary of one 2-cell in $\mathcal{D}$, since otherwise $\gamma'$ would not be a geodesic. Therefore, there are at least $\left \lceil \frac{l}{2} \right \rceil$ 2-cells in $\mathcal{D}$ not lying in $\emph{Car}(\Lambda)$. Hence
\begin{equation}
|\mathcal{D}| \geq k+k'+ \left \lceil \frac{l}{2} \right \rceil.
\end{equation}

Moreover, $l \geq 2$, because there is at least one vertex of $\gamma'$ not lying in the carrier of $\Lambda$ and $l \leq k$ because $\gamma$ is a  geodesic. By Lemma \ref{Lemma:Pairity_of_the_boundary} the number $k+k'+l$ must be even. Combining this with Theorem \ref{Theorem:Generalized_Isoperimetric_Inequality} gives the following system of conditions, which is w.o.p. satisfied by $k$, $k'$ and $l$:

\begin{equation}\label{Equation:Set_of_conditions}
\begin{cases} k+k'+l+2 > \frac{8}{5}\left( k+k'+ \left \lceil \frac{l}{2} \right \rceil \right)
\\ l \leq k, l \geq 2, k \geq 1 \iff 2 \leq l \leq k
\\ k+k'+l \text{  is an even number} \end{cases}
\end{equation}

Now we will find the possible values of $l$ (see \ref{Equation:Set_of_conditions}).

If $l$ is even then we can rewrite the first inequality in (\ref{Equation:Set_of_conditions}) in form $k+k'+l + 2 > \frac{8}{5}(k+k' + \frac{l}{2})$. This is equivalent to $3(k+k') < 10+l$. Combining this with the second inequality in (\ref{Equation:Set_of_conditions}), that is  $l \leq k$, we obtain $k < 5$. Therefore $l \leq 4$, since $l$ is even. Hence for even $l$ possible solutions are: $l=4$ and $l=2$.

If $l$ is odd then the first inequality in (\ref{Equation:Set_of_conditions}) becomes $k+k'+l + 2 > \frac{8}{5}(k+k' + \frac{l+1}{2})$, which is equivalent to $3(k+k') < l + 6$. Combining this with $l \leq k$ we get $k < 3$. Therefore $l \leq 1$ since $l$ is odd. This contradicts $l \geq 2$. Hence, there are no solutions with odd $l$.

First, consider the case, where $l=2$. Then
$$k + k' + 4 > \frac{8}{5}(k+k'+1),$$

so $k+k' < 4$. Combining this with the parity of $k+k'+l$ we obtain that only possible solution for $l=2$ is $k+k'=2$. Hence, there are two subcases to consider: $k'=0$ and $k'=1$. If $k'=1$ then the segment $\Lambda'$ consists of a distinguished 2-cell $\mathfrak{c}$, which is strongly adjacent to some 2-cell $\mathfrak{c}'$:

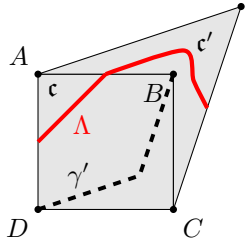
\begin{figure}[h]
\centering
\begin{tikzpicture}[scale=0.9]

\filldraw[draw=black,fill=gray!20] (-1,-1) rectangle (1,1);
\draw [draw=black, fill=gray!20]
       (-1,1) -- (2,2) -- (1,-1) -- (1,1) --cycle;

\path[red, draw,line width=1.5pt] (-1,0) -- (0,1);
\draw[red, preaction={draw, red, thick, double distance=0pt}] plot[smooth] coordinates { (0,1) (1.15,1.35) (1.3,0.9) (1.5,0.5)};

\draw[draw,line width=1.5pt, dashed] (-1,-1) -- (0.5,-0.5) -- (1,1);

\fill (-1,1) circle (1.5pt) node[above left] {$A$};
\fill (1,1) circle (1.5pt) node[below left] {$B$};
\fill(1,-1) circle (1.5pt) node[below right] {$C$};
\fill (-1,-1) circle (1.5pt) node[below left] {$D$};
\fill (2,2) circle (1.5pt);

\draw (-1, 1) node[below right] {$\mathfrak{c}$};
\draw (1.2, 1.2) node[above right] {$\mathfrak{c}'$};

\draw (-0.4, -0.50) node {$\gamma'$};
\draw (-0.35,0.25) node { ${\color{red} \Lambda}$};

\end{tikzpicture}
\caption{The diagram $\mathcal{D}$ with one of possible hypergraphs $\Lambda$ and geodesic segments $\gamma'$.}
\label{Figure:Breaks_away_from_carrier_Fig1}
\end{figure}

The geodesic segment $\gamma'$ joins vertices $A$ and $C$ or $B$ and $D$, since otherwise $\gamma$ would not be geodesic. It can be easily seen that the 2-cell $\mathfrak{c}'$ also belongs to the carrier of $\Lambda$. Therefore $\gamma'$ cannot run along the boundary path of $\mathfrak{c}'$, since it has at least one vertex that lies outside the carrier of $\Lambda$. Hence, there must be at least one more 2-cell in $\mathcal{D}$. This gives us $|\tilde{\partial} \mathcal{D}| = 4$ and $|\mathcal{D}| \geq 3$, which contradicts Theorem \ref{Theorem:Generalized_Isoperimetric_Inequality}.

The second subcase (of the case $l=2$) is $k'=0$ and $k=2$. In that subcase $\emph{Car}(\Lambda')$ consists of two regular 2-cells (see Figure \ref{Figure:Breaks_away_from_carrier_Fig2} a) ).

\begin{figure}[h!]
\centering
\begin{tikzpicture}[scale=1.5]


\filldraw[draw=black,fill=gray!20] (0,0) rectangle (1,1);
\filldraw[draw=black,fill=gray!20] (1,0) rectangle (2,1);


\path[red, draw,line width=2pt] (0,0.5) -- (2,0.5);

\draw (1.5,0.25) node { ${\color{red} \Lambda'}$};

\fill (0,1) circle (1.5pt) node[above left] {$A$};
\fill (1,1) circle (1.5pt) node[above] {$B$};
\fill (2,1) circle (1.5pt) node[above right] {$C$};
\fill (2,0) circle (1.5pt) node[below right] {$D$};
\fill (1,0) circle (1.5pt) node[below] {$E$};
\fill (0,0) circle (1.5pt) node[below left] {$F$};

\draw (1,-1) node {a)};


\begin{scope}[shift={(5,0)}]

\fill[fill=gray!20] (0,1)--(1,2)--(2,1)--cycle;
\filldraw[draw=black,fill=gray!20] (0,0) rectangle (1,1);
\filldraw[draw=black,fill=gray!20] (1,0) rectangle (2,1);


\path[red, draw,line width=2pt] (0,0.5) -- (2,0.5);

\draw (1.5,0.25) node { ${\color{red} \Lambda'}$};

\draw[draw,line width=1.5pt, dashed] (0,1) -- (1,2) -- (2,1);

\draw (0.5,1.5) node[above left] {$\gamma'$};
\fill (0,1) circle (1.5pt) node[above left] {$A$};
\fill (1,1) circle (1.5pt) node[above] {$B$};
\fill (2,1) circle (1.5pt) node[above right] {$C$};
\fill (2,0) circle (1.5pt) node[below right] {$D$};
\fill (1,0) circle (1.5pt) node[below] {$E$};
\fill (0,0) circle (1.5pt) node[below left] {$F$};

\draw (1,-1) node {b)};

  \end{scope}

\end{tikzpicture}
\caption{The diagram $\mathcal{D}$.}
\label{Figure:Breaks_away_from_carrier_Fig2}
\end{figure}
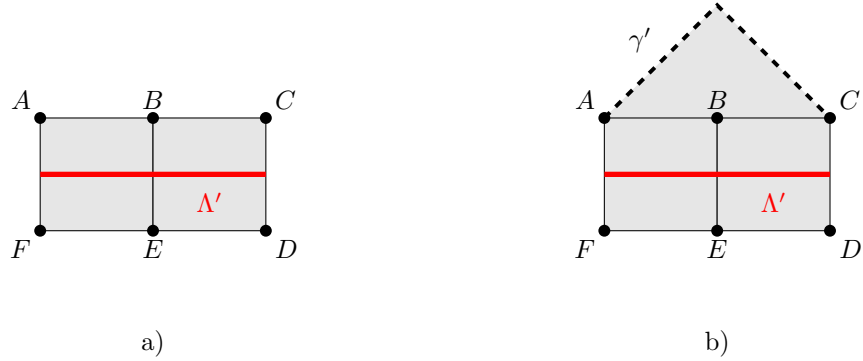

Note, that $\gamma'$  can join only vertices $A$ with $C$ or $D$ with $F$, since in all other cases $\gamma'$ is either not a geodesic or intersects $\Lambda$, according to Lemma \ref{Lemma:Connected_components}. After relabelling vertices, we can always obtain a diagram $\mathcal{D}$ presented in Figure \ref{Figure:Breaks_away_from_carrier_Fig2} b), as desired. Moreover, every 2-cell in $\mathcal{D}$ is a regular 2-cell, since $\mathcal{D}$ does not have any horns. There are exactly three 2-cells in $\mathcal{D}$ since by \ref{Theorem:Generalized_Isoperimetric_Inequality} there are no diagrams with the boundary equal 6 consisting of more than three 2-cells.

Now, consider the case where $l=4$. Then
$$k+k'+6 \geq \frac{8}{5}(k+k'+2),$$

so $k+k'\leq 4$. We start with the subcase, where $k+k'=4$. We know that $|\tilde{\partial} \mathcal{D}| \leq k+k'+l = 10$ and $|\mathcal{D}| \geq k+k'+\frac{l}{2} = 6$. By Theorem \ref{Theorem:Generalized_Isoperimetric_Inequality} and the fact that $|\tilde{\partial} D| \leq 10$ we obtain that $|\mathcal{D}|=6$ and $|\tilde{\partial} \mathcal{D}| = 10$. Let $\mathcal{C} = \mathcal{D} \cap \text{Car}(\Lambda)$. The diagram $\mathcal{D}-\mathcal{C}$ consists of two 2-cells, call them $A$ and $B$. Note that all edges of $\gamma'$ belong to $A \cup B$. Therefore 2-cells $A$ and $B$ have no common edges, since  otherwise there will  be no two points in $A \cup B$ within distance 4. This means, that there are three vertices of $\gamma'$ belonging to $\mathcal{C}$. This contradicts our assumption on $\gamma'$ so there is no diagram with $k+k'=4$ satisfying the assumptions of the Lemma \ref{Lemma:Geodesic_breaks_away_from_carrier}.

Now consider the case, where $k+k'=2$ and $l=4$.  If $k+k'=2$ then the carrier of $\Lambda'$ consists of maximally two 2-cells, so every pair of vertices in this carrier lies in the distance $d_{\cay^{(1)}}$ equal at most 3. Therefore $\gamma'$ cannot join any vertices of the carrier of $\Lambda'$. Hence, there is no diagram $\mathcal{D}$ with $k+k'=2$ and $l=4$.

There are no other possibilities, because of the parity of $k+k'+l$.
\end{proof}

\begin{defi}
Let $\lambda$ be a segment of a hypergraph. A diagram consisting of two strongly adjacent 2-cells, such that $\lambda$ passes through this pair will be called  a \textit{divided tile (of $\lambda$)} we. A 2-cell in the carrier of $\lambda$ that is not a regular $2$-cell but is not an element of a divided tile of $\lambda$ will be called an \textit{isolated 2-cell}. A \textit{tile} is a divided tile or a regular 2-cell.
\end{defi}


\begin{lem}[hypergraphs do not osculate]\label{Lemma:Osculating}
Let $\Lambda'$ be a hypergraph segment in $\cay^{(1)}$. Suppose that one of the following conditions is satisfied
\begin{enumerate}[a)]
\item $\Lambda'$ is a segment of a red hypergraph and there are no isolated red 2-cells of $\Lambda'$,
\item $\Lambda'$ is a segment of a blue hypergraph and there are no isolated blue 2-cells of $\Lambda'$,
\item $\Lambda'$ is a segment of a standard hypergraph and there is no divided tile of $\Lambda'$.
\end{enumerate}Then w.o.p. holds
\begin{enumerate}
\item For every vertex $v$ of $\text{Car}(\Lambda')$ there is only one edge in $\text{Car}(\Lambda')$ ending in $v$, dual to $\Lambda'$ that is not an internal edge of a divided tile (the situation presented in Figure \ref{Figure:Osculating} a) does not occur). In other words: if there is a vertex in $\text{Car}(\Lambda')$ being the endpoint of two edges dual to $\Lambda'$ then one of these edges must be an internal edge of a divided tile. 
\item There is no edge $e$ in $\cay^{(1)}$ such that its vertices lie in $\text{Car}(\Lambda')$ but $e$ does not lie in $\text{Car}(\Lambda')$, see Figure \ref{Figure:Osculating} b).
\end{enumerate}

\begin{figure}[h!]
\centering
\begin{tikzpicture}[scale=1.2]

\draw  plot[smooth, tension=.7] coordinates {(0,-3) (0.0071,-2.3246) (0.2554,-1.8975) (0.244,-1.3927) (-0.0354,-1.0741) (-0.4544,-0.7399) (-0.7887,-0.3747) (-0.9834,0.1302) (-0.8832,0.7283) (-0.4981,1.0161) (0.1226,0.978) (0.6037,0.7162) (0.852,0.3282) (0.8675,-0.2615) (0.7433,-0.7581) (0.5261,-1.1616) (0.2778,-1.3788) (0.4795,-1.8133) (0.883,-2.2323) (1.1623,-2.8841) };

\draw  plot[smooth, tension=.7] coordinates {(-0.2859,-3.0525) (-0.3065,-2.2978) (-0.0171,-1.8429) (0.0036,-1.4191) (-0.1825,-1.2537) (-0.6167,-0.9849) (-1.0612,-0.5507) (-1.2654,0.111) (-1.1285,0.8966) (-0.5626,1.2481) (0.2024,1.1835) (0.833,0.8154) (1.1225,0.2675) (1.1431,-0.3528) (0.9571,-0.9937) (0.6779,-1.3659) (0.5022,-1.4486) (0.6469,-1.6967) (1.1431,-2.1309) (1.3706,-2.8339)};


\draw (-0.3015,-2.2956) -- (0.0118,-2.3211);
\draw (-0.0137,-1.8367) -- (0.2667,-1.8913);
\draw (0.0227,-1.4797) -- (0.2922,-1.4542);
\draw (-0.174,-1.2612) -- (-0.0356,-1.0681);
\draw (-0.6183,-0.988) -- (-0.4508,-0.7367);
\draw (-1.0554,-0.5473) -- (-0.7859,-0.3834);
\draw (-1.2521,0.1119) -- (-0.9789,0.1229);
\draw (-1.1246,0.895) -- (-0.8733,0.7311);
\draw (-0.5564,1.241) -- (-0.4909,1.0116);
\draw (0.1283,0.9679) -- (0.2012,1.1791);
\draw (0.6091,0.7165) -- (0.8385,0.8185);
\draw (0.8568,0.3305) -- (1.0826,0.3887);
\draw (0.8859,-0.0228) -- (1.1627,-0.0338);
\draw (0.8677,-0.2741) -- (1.1518,-0.3324);
\draw (-1.2485,0.5017) -- (-0.9716,0.4507);
\draw (-1.2084,-0.2195) -- (-0.9097,-0.1394);
\draw (-0.8806,1.1245) -- (-0.7167,0.9242);
\draw (-0.8551,-0.7986) -- (-0.6366,-0.5692);
\draw (-0.1449,1.252) -- (-0.1631,1.0225);
\draw (0.5399,1.0298) -- (0.4051,0.8513);
\draw (0.8167,-0.5437) -- (1.0935,-0.6311);

\draw (0.9879,-0.9115) -- (0.7256,-0.8095);
\draw (0.6164,-1.0208) -- (0.8531,-1.1701);
\draw (0.6746,-1.3741) -- (0.4853,-1.2138);
\draw  (0.5071,-1.4652) -- (0.5071,-1.4652);
\draw (0.478,-1.8148) -- (0.6455,-1.7055);
\draw (0.8495,-2.1863) -- (1.0899,-2.0552);
\draw (1.0389,-2.5505) -- (1.2902,-2.5032);
\draw (0.2708,-1.4511) -- (0.5007,-1.4559);

\draw (-0.316,-2.6962) -- (-0.0137,-2.6926);

\draw (-0.3998,-1.1264) -- (-0.2505,-0.9006);

\path[red, draw,line width=0.6pt] (-0.1376,-3.024) -- (-0.174,-2.6889) -- (-0.1558,-2.3029) -- (0.1065,-1.8585) -- (0.1392,-1.4579) -- (-0.1048,-1.1701) -- (-0.316,-1.0281) -- (-0.5346,-0.8715) -- (-0.7422,-0.6821) -- (-0.917,-0.4708) -- (-1.0591,-0.1831) -- (-1.1283,0.1119) -- (-1.121,0.4798) -- (-1.0044,0.8149) -- (-0.8077,1.0225) -- (-0.5237,1.1318) -- (-0.1485,1.1427) -- (0.1684,1.0771) -- (0.4816,0.946) -- (0.7184,0.7566) -- (0.977,0.356) -- (1.0279,-0.0338) -- (1.0061,-0.3033) -- (0.9369,-0.5837) -- (0.864,-0.8569) -- (0.7293,-1.09) -- (0.569,-1.294) -- (0.4088,-1.4615) -- (0.5581,-1.7565) -- (0.966,-2.1171) -- (1.1663,-2.5141) -- (1.272,-2.8528);

\fill (0.2708,-1.4511) circle (1pt);
\draw (0.2508,-1.4011) node[above] {$v$};

\draw (0.07,-1.3189) node {\footnotesize $\mathfrak{c}_1$};
\draw (0.1352,-1.6551) node {\footnotesize $\mathfrak{c}_2$};

\draw (0.6458,-3.5) node {a)};


\begin{scope}[shift={(6,0)}]

\draw  plot[smooth, tension=.7] coordinates {(-1.3298,-2.7993) (-1.4445,-2.0291) (-1.902,-1.4228) (-2.3745,-1.0103) (-2.7725,-0.553) (-2.9469,0.2323) (-2.7189,0.7642) (-2.1032,1.1471) (-1.1491,1.053) (-0.4336,0.6321) (-0.1504,-0.1878) (-0.2846,-0.9182) (-0.5756,-1.4154) (-0.5976,-2.1405) (-0.5678,-2.7219)};
\draw  plot[smooth, tension=.7] coordinates {(-1.7156,-2.8859) (-1.7752,-2.1405) (-2.133,-1.6933) (-2.6845,-1.3356) (-3.2495,-0.4916) (-3.4254,0.2445) (-3.0931,1.0793) (-2.143,1.5138) (-0.908,1.4256) (-0.1336,0.8568) (0.254,-0.2155) (0.1347,-1.1099) (-0.1639,-1.6213) (-0.1932,-2.7496)};
\draw (-1.7736,-2.1488) -- (-1.4516,-2.0432);
\draw (-2.143,-1.6843) -- (-1.8949,-1.4416);
\draw (-2.676,-1.3413) -- (-2.3857,-1.0035);
\draw (-3.0454,-0.8663) -- (-2.7182,-0.6605);
\draw (-3.2882,-0.3966) -- (-2.8871,-0.2277);
\draw (-3.4149,0.1839) -- (-2.9399,0.1839);
\draw (-3.3515,0.6536) -- (-2.8977,0.5111);
\draw (-3.0824,1.06) -- (-2.7024,0.7592);
\draw (-2.6496,1.3605) -- (-2.4385,1.0178);
\draw (-1.9899,1.1339) -- (-2.0269,1.5452);
\draw (-1.515,1.1283) -- (-1.4727,1.5241);
\draw (-1.0611,1.0122) -- (-0.9661,1.4239);
\draw (-0.7603,0.8961) -- (-0.5386,1.255);
\draw (-0.4806,0.6692) -- (-0.2061,0.9436);
\draw (-0.3064,0.4159) -- (0.0261,0.5689);
\draw (-0.185,0.0411) -- (0.1897,0.115);
\draw (-0.1428,-0.3177) -- (0.2688,-0.3336);
\draw (-0.2378,-0.8033) -- (0.1844,-0.951);
\draw (-0.47,-1.2149) -- (-0.0795,-1.3943);
\draw (-0.6125,-1.7374) -- (-0.185,-1.8271);
\draw (-0.5967,-2.2282) -- (-0.2009,-2.2387);
\draw (-1.7208,-2.5132) -- (-1.3619,-2.4234) node (v1) {};
\draw (v1);

\fill (-1.3582,-2.4155) circle (1pt);
\fill (-0.5923,-2.2366) circle (1pt);
\path[draw,line width=0.6pt] (-1.3582,-2.4155) -- (-0.5923,-2.2366);


\path[red, draw,line width=0.6pt] (-1.5063,-2.8599) -- (-1.537,-2.4733) -- (-1.6229,-2.0867) -- (-2.034,-1.5836) -- (-2.5372,-1.1786) -- (-2.8624,-0.7613) -- (-3.1017,-0.3011) -- (-3.2183,0.1837) -- (-3.1385,0.5764) -- (-2.9115,0.92) -- (-2.5495,1.2023) -- (-2.0033,1.3434) -- (-1.494,1.3311) -- (-1.0093,1.2452) -- (-0.6472,1.0857) -- (-0.3466,0.8096) -- (-0.1502,0.4905) -- (-0.0091,0.0855) -- (0.0339,-0.3256) -- (-0.0459,-0.8718) -- (-0.2852,-1.3013) -- (-0.4141,-1.7677) -- (-0.4079,-2.2279) -- (-0.4018,-2.731);

\draw (-1,-3.5) node {b)};
\draw (-1,-2) node[below] {$e$};

\end{scope} 

\end{tikzpicture}
\caption{Two types of hypergraphs osculations.}
\label{Figure:Osculating}
\end{figure}
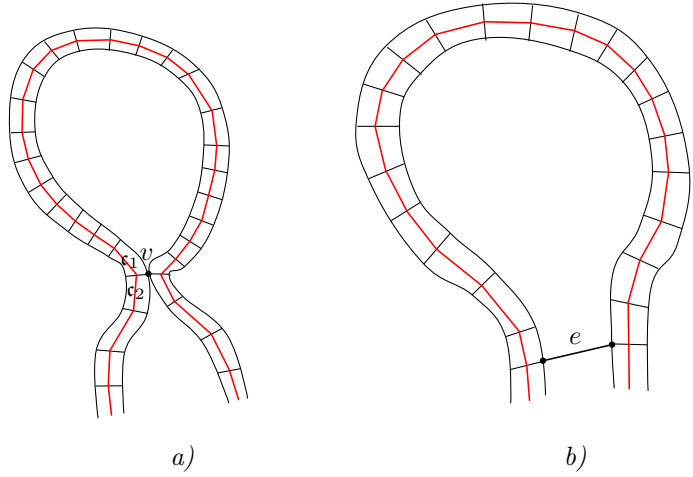
\end{lem}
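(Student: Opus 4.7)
The plan is to argue both parts by contradiction via Theorem~\ref{Theorem:Generalized_Isoperimetric_Inequality}: in each case I construct from the hypothetical osculation a van~Kampen disc diagram $D$ whose ratio $|\tilde\partial D|/|D|$ is at most~$1$, which is well below the threshold $4(1-2d-\varepsilon)>\tfrac{8}{5}$ imposed at density $d<\tfrac{3}{10}$.

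For part~(1), let $\lambda\subseteq\Lambda'$ be the unique minimal subsegment whose first and last dual edges are $e_1,e_2$ (unique because $\Lambda'$ is an embedded tree, by Theorem~\ref{Theorem:Red_hypergraphs_are_trees}), and write $n$ for the number of tiles through which it passes. The carrier $\mathrm{Car}(\lambda)$ immerses in $\cay$ as a strip of $n$ tiles together with the identification of two boundary vertices at $v$, that is, as an annulus. Its two boundary loops $L_1$ and $L_2$ have lengths summing to the strip boundary length $2n+2$, so the shorter of them satisfies $|L_1|\leq n+1$. Fill the longer loop $L_2$ by a van~Kampen sub-diagram in $\cay$ to obtain a disc diagram $D$ with $\partial D=L_1$, then apply the adding-horns procedure of Definition~\ref{Definition:Adding_horns} if necessary. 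Under any one of conditions~(a),~(b),~(c), the loop $L_2$ cannot reduce trivially in the free group---any such reduction would force two consecutive tiles of the strip to share an extra edge, creating either a forbidden divided tile (violating~(c)) or a forbidden isolated distinguished cell (violating~(a) or~(b))---so the filling contributes at least one further $2$-cell. Hence $|D|\geq n+1$ and $|\tilde\partial D|=|L_1|\leq n+1$, giving $|\tilde\partial D|/|D|\leq 1<\tfrac{8}{5}$, contradicting Theorem~\ref{Theorem:Generalized_Isoperimetric_Inequality}.

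For part~(2), the argument is parallel. Let $\lambda\subseteq\Lambda'$ be a minimal subsegment whose carrier contains both endpoints of $e$, and apply Lemma~\ref{Lemma:Diagram_collared_by_path_and_hypergraph} with $\gamma$ a short edge path through $e$ to produce a diagram $D$ with horns, collared by $\lambda$ and $\gamma$. Analogous estimates $|\tilde\partial D|\leq n+O(1)$ and $|D|\geq n+1$ (the $+1$ coming from the $2$-cell forced by the presence of $e$ outside the carrier) yield the same violation of the isoperimetric inequality.

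The main obstacle is the careful verification that the area lower bound $|D|\geq n+1$ holds in all osculation configurations and that the boundary upper bound reflects the geometry correctly. Small values of $n$ and the various osculation configurations (endcap--endcap, endcap--interior, interior--interior in part~(1); different relative positions of the endpoints of $e$ on the carrier in part~(2)) need separate checking; Proposition~\ref{Proposition:Special_cells} and Lemma~\ref{Lemma:Adjacent_to_special_cell} rule out the remaining forbidden small-diagram configurations. Conditions~(a),~(b),~(c) enter precisely to exclude the degenerate situations in which the filled loop would cancel trivially, giving no additional $2$-cell and defeating the area estimate.
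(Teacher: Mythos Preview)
The paper takes a markedly simpler route. For part~(1) it does not build an annulus or fill any loop; it invokes the collared-diagram machinery directly, taking the diagram $\mathcal D$ collared by $\Lambda'$ and the length-$2$ edge-path $\{e_1,e_2\}$, which satisfies $|\tilde{\partial}\mathcal D|\le |\mathcal D|+2$. Feeding this into $|\tilde{\partial}\mathcal D|>\tfrac{8}{5}|\mathcal D|$ gives $|\mathcal D|\le 3$, hence $|\tilde{\partial}\mathcal D|\le 5$; parity (Lemma~\ref{Lemma:Pairity_of_the_boundary}) then forces $|\tilde{\partial}\mathcal D|\le 4$, and a second pass through the isoperimetric bound gives $|\mathcal D|\le 2$. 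The only such diagram is a pair of strongly adjacent $2$-cells, and each of hypotheses (a),(b),(c) rules that configuration out in one line. Part~(2) is handled identically with the path $\{e_1,e,e_2\}$.

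Your annulus-and-filling argument has real gaps. The boundary count $|L_1|+|L_2|=2n+2$ presumes every tile is a single square, but in cases (a) and (b) the carrier may contain divided tiles, which alters the count. The claim that a trivial free reduction of $L_2$ ``forces two consecutive tiles of the strip to share an extra edge'' is not justified: cancellation in the boundary word can arise between non-adjacent portions of the loop and need not produce the adjacency you describe. Even granting an extra $2$-cell from the filling, you have not checked that the glued complex is reduced (or is a diagram with $K$-small legs), which is required before Theorem~\ref{Theorem:Generalized_Isoperimetric_Inequality} applies. Your closing paragraph effectively concedes that the small-$n$ cases and the various endpoint configurations remain unchecked; in the paper's argument these complications never appear, because the isoperimetric-plus-parity bootstrap collapses everything to $|\mathcal D|\le 2$ at once, after which there is only a single configuration left to inspect.
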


\begin{proof}
(1) Suppose, on the contrary, that such a point $v$ exists. Then let $e_1, e_2$ be the two edges, ending in $v$ that are dual to the hypergraph segment $\Lambda'$ and that are none of them is an internal edge of a divided tile. Let $\mathcal{D}$ be a diagram collared by $\Lambda'$ and an edge-path $\{e_1, e_2\}$. Such diagram satisfies $|\tilde{\partial} \mathcal{D}| \leq |\mathcal{D}| + 2$. Therefore, by Theorem \ref{Theorem:Generalized_Isoperimetric_Inequality} we obtain $|\mathcal{D}| \leq 3$. Hence $|\tilde{\partial} \mathcal{D}| \leq 5$, and combining this with Lemma
 \ref{Lemma:Pairity_of_the_boundary} we get that $|\tilde{\partial} \mathcal{D}| \leq 4$. This by Theorem \ref{Theorem:Generalized_Isoperimetric_Inequality} gives us that that $|\mathcal{D}| \leq 2$. A diagram consisting of two 2-cells and having the generalized boundary length at most 4 must be a pair of strongly adjacent 2-cells. This ends the proof in the case c). If $\Lambda'$ is red or blue hypergraph we need to provide further argument. Note, that both red and blue hypergraph join the antipodal midpoints of bundary edges of every tile. Hence, there are no two edges that are dual do $\Lambda'$, have common end and non of them is an internal edge of a divided tile. This ends the proof in cases a) and b).

Therefore there is a blue 2-cell and a red 2-cell in the carrier of $\Lambda'$, which contradicts assumptions a), b) and c).

(2) Let $v_1$ and $v_2$ be the ends of the edge $e$. For $i=1,2$ let $e_i$ be the edge dual to $\Lambda'$ ending in $v_i$ that is not an internal edge of a divided tile. Let $\mathcal{E}$ be a diagram with horns collared by $\Lambda'$ and the edge-path $\{e_1, e, e_2 \}$. As previously, $\mathcal{E}$ satisfies $|\partial \mathcal{E}| \leq |\mathcal{E}| + 2$. Therefore by \ref{Theorem:Generalized_Isoperimetric_Inequality} $|\mathcal{E}| \leq 3$, so $|\partial \mathcal{E}| \leq 5$. By Lemma \ref{Lemma:Pairity_of_the_boundary} we obtain that in fact $|\partial \mathcal{E}| \leq 4$, which, by previous observation, means that $|\mathcal{E}| \leq 2$. A diagram with two 2-cells and the boundary length equal 4 must be a pair of strongly adjacent 2-cells. In a diagram which is a pair of strongly adjacent 2-cells there is no edge which does not lie in $\text{Car}(\Lambda)$ but its endpoints do. This end the proof in cases a), b) and c).
\end{proof}

\begin{lem}\label{Lemma:Intersecting_intersecting_geodesic}
Let $\Lambda_1$ be a hypergraph segment and $\gamma$ be a geodesic segment in $\cay^{(1)}$ that intersects $\Lambda_1$ in two points, which are the endpoints of $\gamma$. Let $\Lambda_2 \neq \Lambda_1$ be a hypergraph in $\cay$ intersecting $\gamma$ in point $x$.
\begin{enumerate}
\item $\Lambda_1$ and $\Lambda_2$ pass through a common 2-cell $\mathfrak{c}_x$, that is $\text{Car}(\Lambda_1) \cap \text{Car}(\Lambda_2) \supseteq \mathfrak{c}_x$.
\item If moreover $\Lambda_2$ intersects $\gamma$ in two different points $x$ and $y$ that do not lie in the boundary of the same 2-cell of $\text{Car}(\Lambda_1)$ then there are two different 2-cells in $\text{Car}(\Lambda_1) \cap \text{Car}(\Lambda_2)$.
\end{enumerate}
\end{lem}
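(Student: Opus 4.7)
My plan is to trace $\Lambda_2$ inside a diagram collared by $\Lambda_1$ and $\gamma$, and then to deduce part (2) from part (1) by arranging $\mathfrak{c}_x$ to be adjacent to the edge of $\gamma$ through $x$.

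For part (1), I first invoke Lemma \ref{Lemma:Diagram_collared_by_path_and_hypergraph} with $\Lambda=\Lambda_1$ to obtain a diagram $D$ with horns collared by a segment $\lambda_1\subseteq\Lambda_1$ and the path $\gamma$. Let $e\subset\gamma$ be the edge whose midpoint is $x$; since $\Lambda_2$ crosses $e$, both $2$-cells of $\cay$ meeting $e$ lie in $\text{Car}(\Lambda_2)$. Let $\mathfrak{c}$ be the one of these two $2$-cells that lies inside $D$ (on the $\lambda_1$-side of $\gamma$). If $\mathfrak{c}\in\text{Car}(\lambda_1)$, set $\mathfrak{c}_x:=\mathfrak{c}$ and we are done. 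Otherwise, trace the segment $\mu\subset\Lambda_2$ starting at $x$ and entering $D$ through $\mathfrak{c}$; by Theorem \ref{Theorem:Red_hypergraphs_are_trees} (together with the tree property of standard hypergraphs) $\Lambda_2$ is an embedded tree, so $\mu$ cannot loop, and since $D$ is finite $\mu$ must leave $D$ through $\partial D$.

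The boundary $\partial D$ splits into an arc along $\gamma$ and an arc along the outer boundary of $\text{Car}(\lambda_1)\cap D$. If $\mu$ exits through the $\text{Car}(\lambda_1)$-arc, its final $2$-cell lies in $\text{Car}(\Lambda_1)\cap\text{Car}(\Lambda_2)$ and we take it to be $\mathfrak{c}_x$. If instead $\mu$ exits through $\gamma$ at a second crossing $x'\neq x$, then $\mu\cup\gamma_{xx'}$ bounds a sub-diagram $D'\subset D$ disjoint from $\text{Car}(\lambda_1)$. Applying Theorem \ref{Theorem:Generalized_Isoperimetric_Inequality} to $D'$, together with the fact that $\gamma_{xx'}$ is geodesic and $\mu$ is a dual hypergraph segment of at most its length, forces $|D'|$ to be bounded by a small constant, and a short case analysis in the spirit of the $(k,k',l)$-enumeration in the proof of Lemma \ref{Lemma:Geodesic_breaks_away_from_carrier} either locates a shared $2$-cell adjacent to $e$ or rules out this configuration. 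In any event $\mathfrak{c}_x$ can be chosen so that $e\subset\partial\mathfrak{c}_x$.

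Part (2) then follows from part (1): apply the construction separately at $x$ and $y$ to obtain $\mathfrak{c}_x,\mathfrak{c}_y\in\text{Car}(\Lambda_1)\cap\text{Car}(\Lambda_2)$ whose boundaries contain the edges of $\gamma$ through $x$ and $y$ respectively. Since by hypothesis no $2$-cell of $\text{Car}(\Lambda_1)$ has both $x$ and $y$ on its boundary, necessarily $\mathfrak{c}_x\neq\mathfrak{c}_y$, giving two distinct common $2$-cells. The main obstacle I anticipate is the exit-through-$\gamma$ subcase in part (1): one must verify, across the small configurations of $D'$ that survive the isoperimetric bound, that the shared cell $\mathfrak{c}_x$ really can be picked adjacent to $e$, since it is this extra control that makes the reduction in part (2) go through cleanly.
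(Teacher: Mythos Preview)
There is a genuine gap: the assertion that $\mathfrak{c}_x$ can always be taken with $e\subset\partial\mathfrak{c}_x$ is false. When $e$ lies off $\partial\,\text{Car}(\Lambda_1)$, Lemma~\ref{Lemma:Geodesic_breaks_away_from_carrier} forces $e$ onto the roof of a house diagram, and the only common $2$-cell available is a \emph{base} cell of that house --- it lies in $\text{Car}(\Lambda_1)\cap\text{Car}(\Lambda_2)$ but $e$ is not in its boundary. Your exit-through-$\gamma$ subcase cannot repair this, since the small configurations surviving the isoperimetric bound are precisely these houses, and in a house no cell of $\text{Car}(\Lambda_1)$ meets $e$. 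Because your reduction for part~(2) rests entirely on $e\subset\partial\mathfrak{c}_x$ to force $\mathfrak{c}_x\neq\mathfrak{c}_y$, part~(2) does not follow. (There is also a softer problem with the tracing step itself: $D$ is a van Kampen diagram mapping to $\cay$, not a subcomplex of $\cay$, so it is not automatic that a branch of $\Lambda_2$ leaving $\mathfrak{c}$ stays within the image of $D$ rather than escaping into cells of $\cay$ outside it.)

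The paper's route is more direct and avoids tracing $\Lambda_2$ altogether. For part~(1) it applies Lemma~\ref{Lemma:Geodesic_breaks_away_from_carrier} together with Lemma~\ref{Lemma:Osculating} to locate $e$: either $e\subset\partial\mathfrak{c}$ for some $\mathfrak{c}\in\text{Car}(\Lambda_1)$, in which case $\mathfrak{c}\in\text{Car}(\Lambda_2)$ immediately, or $e$ is a roof edge of a house and one reads off from the house that $\Lambda_2$ passes through a base cell. For part~(2) the paper then does a short case split on whether each of $x,y$ lies on $\partial\,\text{Car}(\Lambda_1)$ or on a roof: in the roof case the resulting $\mathfrak{c}_x$ has \emph{no} edge of $\gamma$ on its boundary, which already distinguishes it from any $\mathfrak{c}_y$ produced in the non-roof case; and if both points are on roofs, they must lie on different houses (otherwise $\Lambda_2$ would self-intersect, contradicting Theorem~\ref{Theorem:Red_hypergraphs_are_trees}), so the two base cells are distinct. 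The correct invariant separating $\mathfrak{c}_x$ from $\mathfrak{c}_y$ is thus not ``$e\subset\partial\mathfrak{c}_x$'' but the dichotomy between $\mathfrak{c}_x$ meeting $\gamma$ in an edge or not.
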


\begin{proof}
Part (1) Let $e$ be an edge of $\gamma$ dual to $\Lambda_2$. There are two possibilities
\begin{itemize}
  \item The edge $e$ lies in the boundary of a 2-cell $\mathfrak{c} \in \text{Car}(\Lambda_1)$. In that case $\Lambda_2$ is dual to the boundary edge of $\mathfrak{c}$, so $\mathfrak{c}_x := \mathfrak{c}$ belongs to $\text{Car}(\Lambda_2)$. See Figure \ref{Figure:Intersecting_intersecting_geodesic} a).
  \item The edge $e$ does not lie in the boundary edge of a 2-cell $\mathfrak{c} \in \text{Car}(\Lambda_1)$. Since $\gamma$ is a geodesic edge-path, by Lemma \ref{Lemma:Geodesic_breaks_away_from_carrier} combined with Lemma \ref{Lemma:Osculating} we know that $e$ lies in the boundary of a house diagram presented in Figure \ref{Figure:Intersecting_intersecting_geodesic} b). It is clear from the Figure \ref{Figure:Intersecting_intersecting_geodesic} b)  that $\mathfrak{c}_x \in \text{Car}(\Lambda_1) \cap \text{Car}(\Lambda_2)$.
\end{itemize}

Part (2). If $x$ and $y$ lie in the boundaries of distinct 2-cells $\mathfrak{c}_x$ and $\mathfrak{c}_y$ belonging to $\text{Car}(\Lambda)$ then it is clear that $\mathfrak{c}_x$ and $\mathfrak{c}_y$ both lie in $\text{Car}(\Lambda_1) \cap \text{Car}(\Lambda_2)$ (see Figure \ref{Figure:Intersecting_intersecting_geodesic}.

Consider the case where $x$ lies in the boundary of a 2-cell $\mathfrak{c}_1$ in $\text{Car}(\Lambda_1)$ and $y$ do not. Considering a hypergraph $\Lambda_1$ dual to $\gamma$ passing through $y$ we will find (as in part (1) ) a regular 2-cell $\mathfrak{c}_2$ that lies in $\text{Car}(\Lambda_1) \cap \text{Car}(\Lambda_2)$ and that there is no edge of $\gamma$ in the boundary of $\mathfrak{c}_2$. Hence $\mathfrak{c}_2 \neq \mathfrak{c}_1$ and $\mathfrak{c}_1$ and $\mathfrak{c}_2$ both lie in $\text{Car}(\Lambda_1) \cap \text{Car}(\Lambda_2)$.

The last case to consider is when neither $x$ nor $y$ lie in the boundary of $\text{Car}(\Lambda_1)$. First, note that $x$ and $y$ cannot lie in the roof of the same house diagram since then there would be a self intersection of $\Lambda_2$ (see Figure \ref{Figure:Intersecting_intersecting_geodesic} b) ) which contardicts Theorem \ref{Theorem:Red_hypergraphs_are_trees}. Hence, $x$ and $y$ lie in the roofs of different house diagrams, which means that one 2-cell from each of those diagrams belongs to $\text{Car}(\Lambda_1) \cap \text{Car}(\Lambda_2)$. This means that there are at least two different 2-cells in $\text{Car}(\Lambda_1) \cap \text{Car}(\Lambda_2)$.



\begin{figure}[h!]
\centering
\begin{tikzpicture}[scale=0.9]



\filldraw[draw=black,fill=gray!20] (-5,-1) rectangle (-3,1);

\path[red, draw,line width=2pt] (-5,0) -- (-3,0);
\draw (-3.3,-0.25) node { ${\color{red} \Lambda_1}$};


\draw[draw,line width=1.5pt, dashed] (-6,-0.5) -- (-5,-1) -- (-3,-1)--(-2,-0.5);

\path[black, draw,line width=2pt] (-5,-1) -- (-3,-1);

\draw (-5, 1) node[below right] {$\mathfrak{c}_x$};
\draw (-4.7,-1) node[below] {$e$};
\fill (-4,-1) circle (2.5pt);
\draw (-4,-1) node[below] {$x$};
\draw (-5.4,-0.75) node[below left] {$\gamma$};
\draw (-4,-2.5) node {a)};


\fill[fill=gray!20] (0,-1)--(2,-2.5)--(4,-1)--cycle;
\filldraw[draw=black,fill=gray!20] (0,-1) rectangle (2,1);
\filldraw[draw=black,fill=gray!20] (2,-1) rectangle (4,1);

\path[red, draw,line width=2pt] (0,0) -- (4,0);
\draw (3.5,-0.25) node { ${\color{red} \Lambda_1}$};

\path[blue, draw,line width=2pt] (1,1) -- (1,-1) -- (3, -1.75);
\draw (1.3,0.7) node { ${\color{blue} \Lambda_2}$};

\path[blue, draw,line width=2pt] (3,1) -- (3,-1) -- (1, -1.75);

\draw[draw,line width=1.5pt, dashed] (-1,-1) -- (0,-1) -- (2,-2.5) -- (4,-1) -- (5,-1.5);

\path[black, draw,line width=2pt] (2,-2.5) -- (4,-1);

\draw (0, 1) node[below right] {$\mathfrak{c}_x$};
\draw (2.2,-2.25) node[below right] {$e$};
\fill (3,-1.75) circle (2.5pt) node[below right] {$x$};
\fill (1,-1.75) circle (2.5pt) node[below left] {$y$};
\draw (-0.5,0) node[below] {$\gamma$};
\draw (1,-3) node {b)};

\end{tikzpicture}
\caption{Intersections of hypergraphs $\Lambda_1$ and $\Lambda_2$.}
\label{Figure:Intersecting_intersecting_geodesic}
\end{figure}
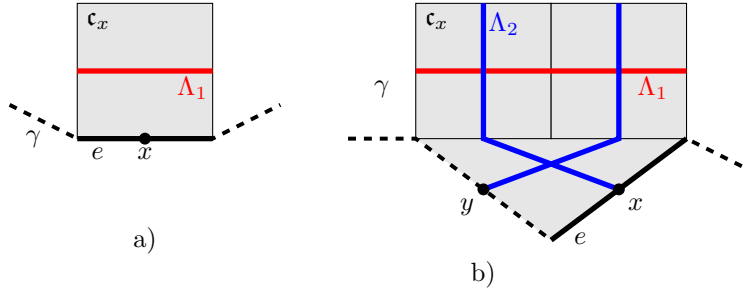
\end{proof}


\begin{lem}\label{Lemma:There_is_a_color_cell_in_diagram}
Let $\Lambda$ be a hypergraph in $\cay$. Let $\gamma$ be a geodesic edge-path in the 1-skeleton of $\cay$. Suppose that $\gamma$ intersects $\Lambda$ in two consecutive points $x$ and $y$. Let $\Lambda_{xy}$ be the segment of $\Lambda$ joining $x$ and $y$. Then:
\begin{enumerate}
\item if $\Lambda$ is a red hypergraph, there is an isolated red 2-cell $\mathfrak{c}$ of $\Lambda_{xy}$,
\item if $\Lambda$ is a blue hypergraph, there is an isolated blue 2-cell $\mathfrak{c}$ of $\Lambda_{xy}$,
\item if $\Lambda$ is a standard hypergraph, there is a divided tile of $\Lambda_{xy}$, such that the none of its internal edges belong to $\gamma$.
\end{enumerate}
\end{lem}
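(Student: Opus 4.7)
My plan is a contradiction argument, carried out in the same way in all three cases; I describe case (1) in detail, since (2) and (3) are analogous using parts (b) and (c) of Lemma \ref{Lemma:Osculating} respectively.

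Suppose that $\Lambda_{xy}$ contains no isolated red $2$-cell. By Lemma \ref{Lemma:Osculating}(a), the boundary of $\text{Car}(\Lambda_{xy})$ on the two sides of $\Lambda_{xy}$ consists of two embedded edge-paths, $P_+$ and $P_-$. Under our hypothesis the carrier is a sequence of tiles, each being either a regular $2$-cell or a divided tile; a direct inspection of how the red hypergraph traverses such a tile shows that it contributes exactly one full boundary edge to each side of $\Lambda_{xy}$. Letting $t$ be the number of tiles in $\text{Car}(\Lambda_{xy})$, we conclude $|P_+|=|P_-|=t$. Write $v_x^{\pm},v_y^{\pm}$ for the endpoints of the end dual edges $e_x,e_y$ on the two sides of $\Lambda$; then $P_{\pm}$ is a path from $v_x^{\pm}$ to $v_y^{\pm}$.

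Because $x,y$ are consecutive intersections of $\gamma$ with $\Lambda$, the segment $\gamma_{xy}$ lies entirely on one side of $\Lambda$; say the $+$ side, so that $\gamma_{xy}$ is an edge-path from $v_x^+$ to $v_y^+$. The key claim is that $|\gamma_{xy}|=t$. Indeed, $\gamma_{xy}$ cannot enter the interior of any divided tile of $\Lambda_{xy}$: one of the two internal edges of such a tile carries the hypergraph image through its midpoint (forbidden by consecutiveness of $x,y$), while the other joins interior points of the tile lying on the $-$ side of $\Lambda$ (forbidden since $\gamma_{xy}$ lies on the $+$ side). Hence $\gamma_{xy}$ uses only edges of $P_+$ together with edges outside $\text{Car}(\Lambda_{xy})$. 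By Lemma \ref{Lemma:Geodesic_breaks_away_from_carrier}, every maximal excursion of $\gamma_{xy}$ outside the carrier is the $2$-edge roof of a house diagram substituting for $2$ consecutive edges of $P_+$, which is a length-preserving swap. Therefore $|\gamma_{xy}|=|P_+|=t$.

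Now the subpath $e_x \cdot \gamma_{xy} \cdot e_y$ of the geodesic $\gamma$ is itself a geodesic from $v_x^-$ to $v_y^-$ of length $t+2$, so $d_{\cay^{(1)}}(v_x^-,v_y^-)=t+2$. But $P_-$ supplies an alternative edge-path from $v_x^-$ to $v_y^-$ of length $t$, giving $d_{\cay^{(1)}}(v_x^-,v_y^-)\le t$, and the resulting inequality $t+2\le t$ is absurd. Cases (2) and (3) proceed identically; for (3) the additional stipulation that no internal edge of the asserted divided tile lies on $\gamma$ is automatic from consecutiveness, since such an internal edge carries the hypergraph image through its midpoint. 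The main obstacle is the verification that $|\gamma_{xy}|=t$; in particular, ruling out shortcuts of $\gamma_{xy}$ through the interior of a divided tile, and confirming via Lemma \ref{Lemma:Geodesic_breaks_away_from_carrier} that every excursion outside the carrier is a length-$2$ house.
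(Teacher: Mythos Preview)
Your contradiction argument for parts (1)--(2) is essentially the paper's: assume the carrier of $\Lambda_{xy}$ is a straight chain of tiles, use Lemma~\ref{Lemma:Osculating} and Lemma~\ref{Lemma:Geodesic_breaks_away_from_carrier} to straighten $\gamma_{xy}$ onto one boundary path of length $t$, then compare with the opposite boundary path to contradict geodesicity. The paper writes the same steps, only with slightly more care in deducing that the modified $\gamma'$ has length exactly equal to the number of tiles (it uses Lemma~\ref{Lemma:Osculating}(1) to rule out a vertex lying in three tiles). Your main line is correct.

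The gap is in part (3). You claim the extra condition ``no internal edge of the divided tile lies on $\gamma$'' is automatic because ``such an internal edge carries the hypergraph image through its midpoint.'' That is false: a standard hypergraph traverses a divided tile by crossing exactly \emph{one} of the two shared edges. Concretely, if the strongly adjacent cells $D,E$ share edges $e_1,e_2$, and $\Lambda$ goes from an external edge of $D$ through (say) $e_1$ into $E$, then $e_2$ is \emph{not} dual to $\Lambda$ (it is opposite in neither $D$ nor $E$ to the edges $\Lambda$ uses). So $\gamma$ could a priori run along $e_2$ without producing an extra intersection with $\Lambda$, and your consecutiveness argument does not exclude this. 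Note also that your earlier side-argument (``the other internal edge lies on the $-$~side'') was tailored to the \emph{red} hypergraph in case~(1) and does not transfer to the standard hypergraph in case~(3), where the geometry of the crossing is different.

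The paper handles this with a separate, short argument: by Lemma~\ref{Lemma:Diagram_collared_by_path_and_hypergraph} there is a \emph{planar} diagram $\mathcal D$ collared by $\Lambda_{xy}$ and $\gamma$. Both cells of any divided tile of $\Lambda_{xy}$ lie in $\mathcal D$, so their shared edges are \emph{internal} edges of $\mathcal D$; since $\gamma$ lies on $\partial\mathcal D$, it cannot contain them. You should replace your last sentence with this argument (or an equivalent one that actually treats the uncrossed internal edge).
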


\begin{proof}
The proof relies on the idea, that if a geodesic crosses a hypergraph two times it must be worthwile in terms of distance. If the hypergraph does not ``turn'' there is no reason for a geodesic to cross it two times. A colored hypergraph can ``turn'' only if there is an isolated 2-cell of the segment of hypergraph and a standard hypergraph ``effectively turns'' when meeting a pair of strongly adjacent 2-cells. Let now state a precise argument.

First, consider the case where $\Lambda$ is a red hypergraph. Suppose, on the contrary, that there is no isolated red 2-cell of $\Lambda_{xy}$. The the carrier of $\Lambda_{xy}$ consists only of tiles (regular 2-cells or divided tiles) and every edge of $\Lambda_{xy}$ joins an antipodal midpoints of boundary edges of every tile in $\text{Car}(\Lambda')$ i.e. the carrier of $\Lambda'$ has the shape illustrated in Figure \ref{Figure:Straight_segment}.

\begin{figure}[h]
\centering
\begin{tikzpicture}[scale=0.9]


\fill[fill=gray!20] (0,1) -- (1,2) -- (2,1) -- (3,1) -- (4,2) -- (4.5,1.5)--(4.5,1)--cycle;
\fill[fill=gray!20] (6,1) -- (7,2) -- (8,1) -- (9,2) -- (10,1)--cycle;

\fill[fill=gray!20] (4,0) rectangle (4.5,1);
\fill[fill=gray!20] (5.5,0) rectangle (6,1);

\filldraw[draw=black,fill=gray!20] (0,0) rectangle (1,1);
\filldraw[draw=black,fill=gray!20] (1,0) rectangle (2,1);
\filldraw[draw=black,fill=gray!20] (2,0) rectangle (3,1);
\filldraw[draw=black,fill=gray!20] (3,0) rectangle (4,1);

\filldraw[draw=black,fill=gray!20] (6,0) rectangle (7,1);
\filldraw[draw=black,fill=gray!20] (7,0) rectangle (8,1);
\filldraw[draw=black,fill=gray!20] (8,0) rectangle (9,1);
\filldraw[draw=black,fill=gray!20] (9,0) rectangle (10,1);

\draw (4,0)-- (4.5,0);
\draw (4,1)-- (4.5,1);
\draw (5.5,0)-- (6,0);
\draw (5.5,1)-- (6,1);


\path[red, draw,line width=2pt] (0,0.5) -- (4.5,0.5);
\path[red, draw,line width=2pt] (5.5,0.5) -- (10,0.5);

\draw (2.5,0.25) node { ${\color{red} \Lambda'}$};

\draw (5,0.5) node {$\dots$};


\draw [decoration={brace, amplitude=10pt, mirror}, decorate] (0,-0.2) -- (10,-0.2);
\draw (5,-0.8) node {$k$};

\draw[draw,line width=1.5pt, dashed] (0,0) -- (0,1) -- (1,2) -- (2,1) -- (3,1) -- (4,2) -- (4.5,1.5);
\draw[draw,line width=1.5pt, dashed] (5.5,1) -- (6,1) -- (7,2) -- (8,1) -- (9,2) -- (10,1)--(10,0);

\fill (0,0.5) circle (1.5pt) node[left] {$x$};
\fill (10,0.5) circle (1.5pt) node[right] {$y$};
\draw (0.5,1.5) node[above left] {$\gamma$};

\fill (0,1) circle (1.5pt) node[left] {$x'$};
\fill (10,1) circle (1.5pt) node[right] {$y'$};

\fill (0,0) circle (1.5pt) node[left] {$x''$};
\fill (10,0) circle (1.5pt) node[right] {$y''$};

\end{tikzpicture}
\caption{The diagram collared by $\Lambda_{xy}$ and $\gamma$.}
\label{Figure:Straight_segment}
\end{figure}
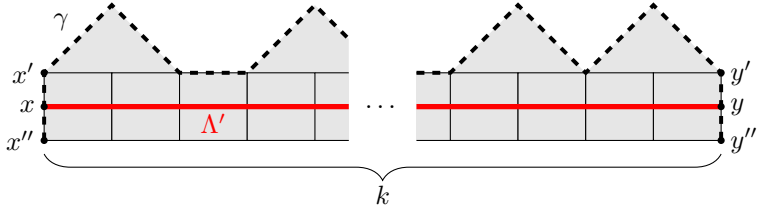

Let $x'$ and $x''$ be the two ends of the edge of $\cay$ containing $x$, and suppose that $x'$ lies on $\gamma$ between $x$ and $y$. Let $y'$ and $y''$ be the two ends of the edge of $\cay$ containing $y$, and suppose that $y'$ lies on $\gamma$ between $x$ and $y$. Let $\gamma'_0$ be the subpath of $\gamma$ joining $x'$ and $y'$ (see Figure \ref{Figure:Straight_segment}). Let $V$ be the set of vertices of $\gamma'$. Define
$$ V' = \{v \in V | \text{there is at least one edge of }\gamma'  \text{ ending in }v \text{ and not belonging to } \text{Car}(\Lambda') \}.$$

For each pair ${s,t}$ of two consecutive vertices in $V'$ consider the subsegment of $\gamma'$ joining $s$ and $t$ and denote it by $\gamma'_{st}$.  By Lemma \ref{Lemma:Osculating} part (2) we obtain that $\gamma'_{st}$ either
\begin{enumerate}
\item lies in the boundary of the carrier of $\Lambda$, or
\item has a length at least 2.
\end{enumerate}

In the latter case, we know that $\gamma'_{st}$ is a roof of a house diagram. Therefore without change of the total length of $\gamma'$ we can replace $\gamma'_{st}$ with an edge-path of length 2, that lies in the boundary of $\text{Car}(\Lambda_{xy})$ and does not cross $\Lambda_{xy}$.

Hence, we can modify $\gamma'$  to a path having the same length such that all its edges lie in the boundary of $\text{Car}(\Lambda_{xy})$. From now $\gamma'$ denotes modified path. For every vertex $v$ of $\gamma'$ denote by $e_v$ the edge dual to $\Lambda'$  ending in $v$ that is not an internal edge of a divided tile (this definition is unambiguous according to Lemma \ref{Lemma:Osculating} part (1) ). We will prove now that the length $d$ of $\gamma'$ equals the number $k$ of tiles of $\Lambda_{xy}$. Every edge of $\gamma'$ lies in the boundary of exactly one tile so $k \geq d$. Suppose, on the contrary, that $k > d$. This means that there must be a vertex of $\gamma'$ that is a common vertex of more than two tiles. But in that case there are at least two edges dual to $\Lambda_{xy}$ that are not internal edges of a divided tile. This contradicts Lemma \ref{Lemma:Osculating} part 1). Hence $k = d$. Consider now the edge-path $\gamma''$ in $\cay$ joining $x''$ and $y''$ and goes along the boundary of $\text{Car}(\Lambda_{xy})$. The length of this edge-path is not greater then $k$ since every tile of $\Lambda_{xy}$ shares at most one boundary edge with $\gamma''$. Hence there exists an edge-path joining $x''$ and $y''$ of length $k$. Note that the segment of $\gamma$ joining $x''$ and $y''$ has length $k+2$. This contradicts the fact that $\gamma$ is a geodesic.

If $\Lambda'$ is a blue hypergraph the proof is analogous.

Consider now the case, where $\Lambda$ is a standard hypergraph. In that case we can provide an analogous argument to prove that there must be a divided tile of $\Lambda_{xy}$.  We only need to prove that none of the internal edges of a divided tile belong to $\gamma$. Suppose, on the contrary, that this is the case.

By Lemma \ref{Lemma:Diagram_collared_by_path_and_hypergraph} there exists a planar diagram $\mathcal{D}$ collared by $\Lambda_{xy}$ and $\gamma$. By definiton of a diagram collared by a hypergraph and path $\gamma$ cannot contain any internal edge of $\mathcal{D}$ which ends the proof. 
\end{proof}

The definition of a 2-collared diagram in case of standard hypergraphs can be found in \cite[Subsection 3.3]{ow11}. In a nutshell this is a diagram collared by two hypergraphs (see Definition \cite[Lemma 3.17]{ow11}). Let us state this definition for a case of general hypergraphs.

\begin{defi}[2-collared diagram]\label{Definition:2collared}
Let $\lambda_1$ and $\lambda_2$ be two hypergraph segments. Let $D$ be a van Kampen diagram with the following additional properties:
\begin{enumerate}
  \item There are two external 2-cells of $D$ called \textit{corners}
  \item $\lambda_1$ and $\lambda_2$ pass only through external 2-cells of $D$
  \item every external edge of $D$ belongs to $\text{Car}(\lambda_1) \cup \text{Car}(\lambda_2)$
  \item corners are the only 2-cells of $D$ containing the edges of both hypergraph segments $\lambda_1$ and $\lambda_2$.
\end{enumerate}
We call $D$ \textit{2-collared diagram (collared by $\lambda_1$ and $\lambda_2$)}.
\end{defi}

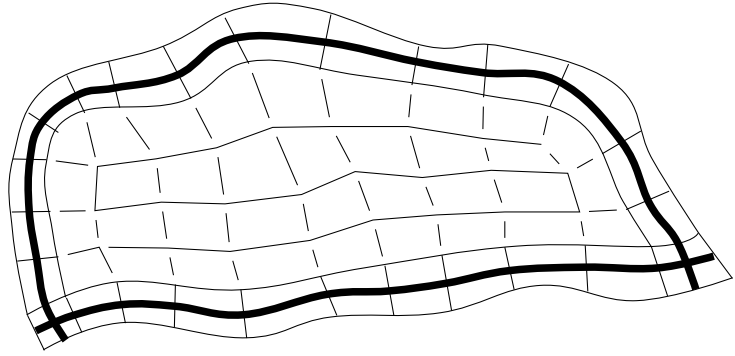
\begin{figure}[h]
\centering
\begin{tikzpicture}[scale=0.5]

\draw  plot[smooth, tension=.7] coordinates {(-22.1855,7.9224) (-20.0447,8.6486) (-16.7913,8.2419) (-14.5545,8.7938) (-11.6205,8.91) (-8.977,9.6363) (-6.4207,9.2296) (-3.8644,9.8396)};
\draw  plot[smooth, tension=.7] coordinates {(-22.6301,8.8519) (-20.286,9.7234) (-16.9454,9.5201) (-13.6629,10.1011) (-9.596,10.682) (-5.8196,10.682) (-4.7738,11.0306)};

\draw (-22.6089,8.8386) node (v1) {} -- (-22.1606,7.9075);
\draw (-20.2294,9.718) node (v5) {} -- (-20.0053,8.6317);
\draw (-16.9361,9.5283) node (v21) {} -- (-16.7809,8.2351);
\draw (-14.8153,9.8731) node (v22) {} -- (-14.3842,8.8386);
\draw (-11.3323,8.9593) -- (-11.5909,10.4421) node (v32) {};
\draw (-9.6698,9.5223) -- (-9.9185,10.6528) node (v37) {};
\draw (-7.7114,9.4766) -- (-7.7803,10.7008) node (v42) {};
\draw (-5.5905,9.3559) -- (-6.0216,10.6663) node (v3) {};
\draw (-4.7629,11.0284) node (v2) {} -- (-3.8663,9.8214);
\draw (-12.8695,8.8449) -- (-13.106,10.1589) node (v27) {};
\draw (-18.7037,8.5296) -- (-18.6774,9.6596) node (v7) {};
\draw  plot[smooth, tension=.7] coordinates {(v1) (-23,11) (-23,13) (-22,15) (-19,16) (-17,17) (-15,17) (-12,16) (-10,16) (-7,15) (-6,13) (v2)};
\draw  plot[smooth, tension=.7] coordinates {(-21.5945,9.318) (-21.9362,11.0262) (-22.1202,12.8395) (-21.3317,14.2061) (-18.546,14.5215) (-16.6801,15.599) (-13.8419,15.231) (-10.6094,14.7317) (-7.9814,14.0221) (-6.7725,11.8672) (v3)};
\draw (-21.575,9.3043) -- (-21.1671,8.3904);
\draw (-22.8805,10.2833) -- (-21.7872,10.3976) node (v4) {};
\draw (-23.0273,11.5888) -- (-21.9993,11.6051) node (v13) {};
\draw (-23,13) -- (-22.1054,12.9816) node (v14) {};
\draw (-22.5802,14.2252) -- (-21.7888,13.7052);

\draw (-20.1554,14.3628) node (v11) {} -- (-20.4491,15.5867);
\draw (-19,16) -- (-18.2625,14.6076) node (v17) {};
\draw (-16.7241,15.5366) node (v26) {} -- (-17.3572,16.7575);

\draw (-14.8357,15.3745) node (v31) {} -- (-14.542,16.8432);

\draw (-12.388,14.9666) node (v36) {} -- (-12.2085,15.9946);
\draw (-10.4837,14.6432) node (v41) {} -- (-10.3707,16.0451);
\draw (-8.7201,14.3945) node (v48) {} -- (-8.2227,15.525);

\draw (-7.3294,13.139) node (v45) {} -- (-6.3014,13.7428);
\draw (-6.693,11.654) node (v44) {} -- (-5.5508,12.1436);
\draw (v4) -- (-20.7035,10.6528) node (v6) {} -- (v5);
\draw (v6) -- (-18.8721,10.6302) node (v8) {} -- (-17.2216,10.5397) node (v20) {} -- (-15.0962,10.8337) node (v23) {} -- (-13.423,11.3763) node (v28) {} -- (-11.7273,11.512) node (v33) {} -- (-9.9185,11.5909) node (v38) {} -- (-7.9288,11.5909) node (v43) {} -- (-8.2453,12.6199) node (v46) {} -- (-10.2802,12.6988) node (v39) {} -- (-12.1117,12.5068) node (v34) {} -- (-13.8979,12.6651) node (v29) {} -- (-15.3223,12.0546) node (v24) {} -- (-17.3572,11.8059) node (v19) {} -- (-19.053,11.8511) node (v9) {} -- (-20.8166,11.625) node (v12) {} -- (-20.7487,12.8008) node (v15) {} -- (-19.1886,13.0043) node (v10) {} -- (-17.5833,13.2982) node (v18) {} -- (-16.091,13.8408) node (v25) {} -- (-14.5309,13.8634) node (v30) {} -- (-12.496,13.8634) node (v35) {} -- (-10.5063,13.5469) node (v40) {} -- (-8.9462,13.3886) node (v47) {};
\draw (v7) -- (v8) -- (v9) -- (v10) -- (v11);
\draw (v6) -- (v12);
\draw (v13) -- (v12);
\draw (v14);
\draw (v15) -- (v14);
\draw (-21.0879,14.2252) node (v16) {} -- (-21.5627,15.2201);
\draw (v15) -- (v16);
\draw (v17) -- (v18) -- (v19) -- (v20) -- (v21);
\draw (v22) -- (v23) -- (v24) -- (v25) -- (v26);
\draw (v27) -- (v28) -- (v29) -- (v30) -- (v31);
\draw (v32) -- (v33) -- (v34) -- (v35) -- (v36);
\draw (v37) -- (v38) -- (v39) -- (v40) -- (v41);
\draw (v42) -- (v43) -- (v44);
\draw (v45) -- (v46) -- (v47) -- (v48);

\draw[line width=1mm]  plot[smooth, tension=.7] coordinates {(-21.5935,8.1767) (-22.1361,9.1264) (-22.3622,10.3247) (-22.5657,11.6361) (-22.5431,12.9927) (-22.2265,13.9649) (-21.2543,14.7111) (-20.2821,14.892) (-18.6315,15.2311) (-17.1392,16.2033) (-14.7426,16.1129) (-12.3007,15.5929) (-10.5145,15.2989) (-8.5022,15.0728) (-6.8064,13.4449) (-6.0603,11.7944) (-5.3367,10.7995) (-4.8393,9.5333)};
\draw[line width=1mm]  plot[smooth, tension=.7] coordinates {(-22.3848,8.4254) (-21.4126,8.8324) (-20.0786,9.1264) (-18.6767,9.0811) (-16.8679,8.855) (-14.6521,9.3977) (-13.0016,9.4881) (-11.4867,9.6916) (-9.7909,10.0308) (-7.8012,10.1212) (-5.8794,10.0986) (-4.3645,10.4151)};

\end{tikzpicture}
\caption{2-collared diagram. Collaring segments are indicated with thick lines.}
\label{Figure:2_collared_diagram}
\end{figure}

\begin{lem}[reformulation of {\cite[Theorem 3.12]{ow11}}]\label{Lemma:2collared_diagram}
Suppose, $\Lambda_1$ and $\Lambda_2$ are distinct hypergraphs that are embedded trees in $\cay$, and that they intersect at least twice. Let $\mathfrak{c}$ be a common 2-cell of $\text{Car}(\Lambda_1)$ and $\text{Car}(\Lambda_2)$  Then there exists a 2-collared diagram collared by segments $\Lambda_1$ and $\Lambda_2$. Moreover we can choose $\mathfrak{c}$ to be one of its corners.
\end{lem}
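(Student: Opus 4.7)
The plan is to adapt the proof of \cite[Theorem 3.12]{ow11} to our setting of colored hypergraphs. Since $\Lambda_1$ and $\Lambda_2$ intersect at least twice, I can choose a second common $2$-cell $\mathfrak{c}' \neq \mathfrak{c}$ lying in $\text{Car}(\Lambda_1) \cap \text{Car}(\Lambda_2)$. Because each $\Lambda_i$ is an embedded tree by Theorem \ref{Theorem:Red_hypergraphs_are_trees}, there is a unique segment $\lambda_i \subset \Lambda_i$ joining a midpoint of an edge of $\partial \mathfrak{c}$ to a midpoint of an edge of $\partial \mathfrak{c}'$, for $i=1,2$.

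Next, I construct a null-homotopic edge-loop in $\cay^{(1)}$ associated with $\lambda_1 \cup \lambda_2$: traverse one side of $\text{Car}(\lambda_1)$ from a vertex on $\partial \mathfrak{c}$ to a vertex on $\partial \mathfrak{c}'$, cross $\mathfrak{c}'$ to the opposite side of $\text{Car}(\lambda_2)$, return along $\text{Car}(\lambda_2)$ back to $\partial \mathfrak{c}$, and close up through $\mathfrak{c}$. Since $\cay$ is simply connected, this loop bounds a van Kampen diagram. Among all such fillings, and also varying the choice of $\mathfrak{c}'$ among all common $2$-cells distinct from $\mathfrak{c}$, I pick one $D$ of minimal area.

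Then I verify that $D$ satisfies the four conditions of Definition \ref{Definition:2collared}. Conditions (1)--(3) follow by construction together with minimality: if $\lambda_1$ or $\lambda_2$ were to enter an internal $2$-cell of $D$, the tree property would let me split $D$ along a hypergraph segment into smaller subdiagrams, at least one of which still bounds a loop of the above type formed from sub-segments of $\lambda_1$ and $\lambda_2$, contradicting minimality. Condition (4) --- that $\mathfrak{c}$ and $\mathfrak{c}'$ are the only $2$-cells carrying edges from both collaring segments --- follows because any extra common $2$-cell could be used in place of $\mathfrak{c}'$ to produce a strictly smaller filling, again contradicting minimality.

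The main obstacle is that in \cite[Theorem 3.12]{ow11} the hypergraphs join antipodal midpoints of every $2$-cell they traverse, whereas in our setting red and blue hypergraphs may enter and leave distinguished $2$-cells along non-opposite edges. I therefore need to check that the minimality-based surgery argument still goes through in the presence of divided tiles, where the geometry of a hypergraph segment crossing a strongly adjacent pair is different. The key observation is that the embedded-tree property (Theorem \ref{Theorem:Red_hypergraphs_are_trees}) and simple-connectedness of $\cay$ are the only structural inputs to the Ollivier--Wise proof; these survive intact in our setting, so the argument transfers with only cosmetic modifications accounting for how colored hypergraph edges sit inside divided tiles.
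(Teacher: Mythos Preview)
Your proposal is correct and matches the paper's approach: the paper does not give an explicit argument but simply remarks that the proof of \cite[Theorem 3.12]{ow11} carries over verbatim once one observes that the antipodal-midpoints property of standard hypergraphs is nowhere essential. Your sketch is precisely an outline of that transferred Ollivier--Wise argument (choose a second common $2$-cell, fill the associated loop by a minimal van Kampen diagram, and use minimality plus the embedded-tree property to verify the conditions of Definition~\ref{Definition:2collared}), together with the same observation about divided tiles that the paper makes informally.
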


There is a definition of a 2-collared diagram in the beginning of \cite[Section 3.11]{ow11}. However this is the special case of the general definition of a diagram collared by hypergraphs and paths that was stated for a general polygonal complex and for the standard hypergraphs. Definition \ref{Definition:2collared} is a modification of it: we use it for $l=4$ and for general hypergraphs, but the fact that the hypergraph joins the antipodal midpoints of edges in $\cay$ is not necessary to provide the proof of Lemma \ref{Lemma:2collared_diagram}, in fact the proof can be provided, as well, in the case of colored hypergraphs.

\begin{lem}\label{Lemma:Shape_of_2_collared}
W.o.p. every 2-collared diagram in $\cay$ is a pair of strongly adjacent 2-cells.
\end{lem}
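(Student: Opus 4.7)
The plan is to apply Theorem \ref{Theorem:Generalized_Isoperimetric_Inequality} to sharply bound the size of any 2-collared diagram and then identify the remaining possibility directly.

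Let $D$ be a 2-collared diagram with collaring segments $\lambda_1,\lambda_2$ and corners $\mathfrak{c}_1,\mathfrak{c}_2$. First I would apply the adding-horns procedure of Definition \ref{Definition:Adding_horns} to obtain a diagram $\tilde D$ with $1$-small legs. Each horn is glued to its strongly adjacent partner along two shared edges, contributing $-2$ from internalising those edges and $+2$ from the two new external edges of the added cell, so $|\tilde\partial\tilde D|=|\tilde\partial D|$ while $|\tilde D|\ge|D|$. Hence $\tilde D$ meets the hypotheses of Theorem \ref{Theorem:Generalized_Isoperimetric_Inequality}.

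Next I would establish the upper bound $|\tilde\partial D|\le|D|+2$. By Definition \ref{Definition:2collared}(3), every external edge of $D$ lies in $\text{Car}(\lambda_1)\cup\text{Car}(\lambda_2)$, and by (4) the two carriers meet only at the two corners. A strip-by-strip analysis shows: each regular non-corner cell contributes at most one outward edge (the non-dual edge on the outside of $D$, its twin facing inward), each divided tile contributes two cells and at most two outward edges (both non-dual edges of the tile meet at the vertex opposite the internal vertex), and each of the two corners contributes at most two outward edges. Summing gives $|\tilde\partial D|\le(|D|-2)+2\cdot 2=|D|+2$.

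Choosing $\varepsilon>0$ with $4(1-2d-\varepsilon)>8/5$ (possible since $d<3/10$), Theorem \ref{Theorem:Generalized_Isoperimetric_Inequality} yields w.o.p.\ $|\tilde\partial\tilde D|>\frac{8}{5}|\tilde D|\ge\frac{8}{5}|D|$. Combined with the upper bound this gives $|D|+2>\frac{8}{5}|D|$, so $|D|\le 3$. The case $|D|=1$ is excluded because there must be two distinct corners. The case $|D|=3$ is excluded by parity: Lemma \ref{Lemma:Pairity_of_the_boundary} forces $|\tilde\partial D|$ to be even, so $|\tilde\partial D|\le 4$, contradicting $|\tilde\partial D|>24/5>4$. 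Thus $|D|=2$ and, by parity together with the bounds, $|\tilde\partial D|=4$. A two-cell diagram with generalised boundary length $4$ has exactly two shared (internal) edges, which by Definition \ref{Definition:Distinguished} means the two cells are strongly adjacent.

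The hardest step will be justifying the per-cell contribution bound in the strip analysis when divided tiles and isolated distinguished cells occur. The key geometric input is that the two non-dual edges of a divided tile share a common vertex, so the pair of cells never yields more external edges than cells, and the horn-adding step handles isolated coloured cells without changing the generalised boundary.
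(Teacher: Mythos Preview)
Your proof is correct and follows essentially the same route as the paper. Both arguments add horns to make Theorem~\ref{Theorem:Generalized_Isoperimetric_Inequality} applicable, establish the key inequality $|\tilde\partial|\le(\text{number of cells})+2$, combine it with the isoperimetric bound $|\tilde\partial|>\tfrac{8}{5}|\cdot|$, and then invoke parity (Lemma~\ref{Lemma:Pairity_of_the_boundary}) to force the diagram down to two strongly adjacent cells. The only differences are cosmetic: the paper parameterises by the segment lengths $k_1,k_2$ and horn count $h$ (obtaining $|\tilde\partial\mathcal D|\le k_1+k_2+h$ and $|\mathcal D|\ge k_1+k_2+h-2$, which is your inequality in disguise), and it reaches $|\mathcal D|\le 2$ in one step rather than excluding $|D|=1,3$ separately.
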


\begin{proof}
Let $\mathcal{D}$ be a 2-collared diagram (with horns) collared by hypergraphs $\Lambda_1$ and $\Lambda_2$. Let $k_1$ be the length of the collaring segment of $\Lambda_1$ and $k_2$ be the length of the collaring segment of $\Lambda_2$. Let $h$ denote the number of horns in $\mathcal{D}$. Each pair of strongly adjacent 2-cells gives a contribution to the generalized boundary length equal 2, hence $|\tilde{\partial} \mathcal{D}| \leq k_1+k_2+h$. There are two 2-cells in $\emph{Car}(\Lambda_1) \cap \emph{Car}(\Lambda_2)$, so $|\mathcal{D}| \geq k_1+k_2+h-2$. Hence, by Theorem \ref{Theorem:Generalized_Isoperimetric_Inequality} for the density $d < \frac{3}{10}$  we obtain that w.o.p.:

\begin{equation}\label{eq:intersection}
k_1+k_2+h \geq |\tilde{\partial} \mathcal{D}| > \frac{8}{5} |\mathcal{D}| > \frac{8}{5}(k_1 + k_2 + h - 2).
\end{equation}

Hence $k_1+k_2+h \leq 5$. By Lemma \ref{Lemma:Pairity_of_the_boundary} we know that $|\tilde{\partial} \mathcal{D}|$ is even. Therefore $|\tilde{\partial} \mathcal{D}| \leq 4$ and by (\ref{eq:intersection}) we obtain $|\mathcal{D}| \leq 2$. Hence, $\mathcal{D}$ must be a diagram consisting of two strongly adjacent 2-cells.
\end{proof}

\begin{lem}\label{Lemma:Regular_Cell_Intersection_Once}
The following statement holds w.o.p.: Let $\Lambda_1$ and $\Lambda_2$ be two hypergraphs in $\cay$. Suppose that $\Lambda_1$ and $\Lambda_2$ intersect transversally in a 2-cell $\mathfrak{c}$, meaning that $\emph{Car}(\Lambda_1) \cap \emph{Car}(\Lambda_2)$ contains a regular 2-cell $\mathfrak{c}$ and that $\Lambda_1 \cap \mathfrak{c} \neq \Lambda_2 \cap \mathfrak{c}$. Then $\Lambda$ and $\Lambda'$ intersect only once. Moreover $\mathfrak{c}$ is the only one 2-cell in $\text{Car}(\Lambda_1)\cap \text{Car}(\Lambda_2)$.
\end{lem}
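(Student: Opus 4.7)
The plan is to argue by contradiction, reducing the lemma to an application of the structure theorem Lemma \ref{Lemma:Shape_of_2_collared}. A preliminary observation simplifies matters: inside any single 2-cell each hypergraph occupies a single segment, so $\Lambda_1$ and $\Lambda_2$ can meet in at most one point per 2-cell. Consequently, if they intersected at two distinct points these would be contained in two different 2-cells, both of which would automatically lie in $\text{Car}(\Lambda_1) \cap \text{Car}(\Lambda_2)$. Thus the assertion ``they intersect only once'' follows from the moreover clause, and it suffices to prove that $\mathfrak{c}$ is the only 2-cell in $\text{Car}(\Lambda_1)\cap\text{Car}(\Lambda_2)$.

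Assume for contradiction that there is another 2-cell $\mathfrak{c}'\neq\mathfrak{c}$ in $\text{Car}(\Lambda_1)\cap\text{Car}(\Lambda_2)$. Choose segments $\lambda_1\subseteq\Lambda_1$ and $\lambda_2\subseteq\Lambda_2$ each running from $\mathfrak{c}$ to $\mathfrak{c}'$; together they bound a closed loop in $\cay$. Following the construction used in Lemma \ref{Lemma:2collared_diagram} (the Ollivier--Wise argument produces a 2-collared diagram from any pair of hypergraph segments sharing two common carrier 2-cells at their endpoints, not merely from actual transverse intersections), one obtains a 2-collared diagram $\mathcal{D}$ collared by $\lambda_1$ and $\lambda_2$ whose two corner 2-cells can be taken to be $\mathfrak{c}$ and $\mathfrak{c}'$.

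Apply Lemma \ref{Lemma:Shape_of_2_collared} to $\mathcal{D}$: w.o.p. any 2-collared diagram consists of exactly two strongly adjacent 2-cells. In particular the corner $\mathfrak{c}$ must be strongly adjacent to the other 2-cell of $\mathcal{D}$, so $\mathfrak{c}$ is a distinguished 2-cell. This directly contradicts the hypothesis that $\mathfrak{c}$ is regular, and the lemma follows.

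The main obstacle is justifying in the second paragraph that the construction of Lemma \ref{Lemma:2collared_diagram} genuinely extends to the case where one of the two common carrier 2-cells witnesses a parallel passage of $\Lambda_1$ and $\Lambda_2$ rather than a true intersection. The planar van Kampen/minimal disc argument used in \cite{ow11} should go through verbatim, since it only requires that the two corners host edges of both segments; nevertheless, one must verify that the choice of the minimal bounding disc produces a diagram satisfying the four conditions of Definition \ref{Definition:2collared}, in particular that no interior 2-cell of $\mathcal{D}$ accidentally carries edges of both $\lambda_1$ and $\lambda_2$.
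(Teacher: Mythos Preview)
Your argument has the gap you yourself flag, and it is a genuine one: Lemma~\ref{Lemma:2collared_diagram} is stated under the hypothesis that the two hypergraphs \emph{intersect} at least twice, not merely that their carriers share two 2-cells. If the second common carrier cell $\mathfrak{c}'$ is distinguished, the two hypergraphs can pass through it along the same segment (``parallel passage''), so there is no second crossing and the loop $\lambda_1\cup\lambda_2$ you want to bound degenerates at that corner. Saying the Ollivier--Wise construction ``should go through verbatim'' does not settle this; one really has to produce a disc whose corner at $\mathfrak{c}'$ satisfies condition~(4) of Definition~\ref{Definition:2collared}, and your final paragraph concedes that this verification is missing.

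The paper sidesteps the issue by proving the two assertions in the opposite order. First it shows directly that $\Lambda_1$ and $\Lambda_2$ intersect only once: a second intersection point is exactly the hypothesis of Lemma~\ref{Lemma:2collared_diagram}, so one obtains a 2-collared diagram with $\mathfrak{c}$ as a corner, and Lemma~\ref{Lemma:Shape_of_2_collared} then forces $\mathfrak{c}$ to be distinguished, contradicting regularity. Only afterwards does the paper address the ``moreover'' clause, and here it does \emph{not} invoke 2-collared diagrams at all: knowing already that there is a unique intersection point, any further common carrier cell $\mathfrak{d}$ must host a parallel passage, which forces $\mathfrak{d}$ to be distinguished of some colour and both $\Lambda_1,\Lambda_2$ to be hypergraphs of that colour; but then one checks that they meet again in the cell strongly adjacent to $\mathfrak{d}$, giving a second intersection and a contradiction. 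This ordering lets the paper use Lemma~\ref{Lemma:2collared_diagram} only where its hypotheses are literally met, and replaces your unproved extension by a short case analysis.
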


\begin{proof}
Suppose, on the contrary, that there is another point of intersection. Then by Lemma \ref{Lemma:2collared_diagram} there is a 2-collared diagram with horns $\mathcal{D}$ collared by $\Lambda_1$ and $\Lambda_2$ and containing $\mathfrak{c}$. By Lemma \ref{Lemma:Shape_of_2_collared} we know that every 2-cell of $\mathcal{D}$ is distinguished, so this is a contradiction with the assumption that $\mathfrak{c}$ is regular.

Now suppose, on the contrary, that there is a 2-cell $\mathfrak{d} \in \text{Car}(\Lambda_1)\cap \text{Car}(\Lambda_2)$ such that $\mathfrak{d} \neq \mathfrak{c}$. We know that there is only one point of intersection of $\Lambda_1$ and $\Lambda_2$, so the only possibility is that $\mathfrak{d}$ is a distinguished 2-cell of some color, and $\Lambda_1$ and $\Lambda_2$ are both hypergraphs of this color (see Figure \ref{Figure:Regular_cell_intersection_once}).

\begin{figure}[h]
\centering
\begin{tikzpicture}[scale=0.5]
\draw  (-28,28) rectangle (-24,24);
\draw (-24,28) -- (-21,21) -- (-28,24);
\draw[thick] (-28,26) -- (-24,26) -- (-22.2996,24.0922);
\draw[thick] (-26,28) -- (-26,24) -- (-24.0922,22.3699);

\node at (-23,23) {$\mathfrak{d}$};
\node[above] at (-27,26) {$\Lambda_1$};
\node[right] at (-26,25) {$\Lambda_2$};
\node at (-25,27) {$\mathfrak{c}$};
\end{tikzpicture}
\caption{Two 2-cells in $\text{Car}(\Lambda_1)\cap \text{Car}(\Lambda_2)$.}
\label{Figure:Regular_cell_intersection_once}
\end{figure}
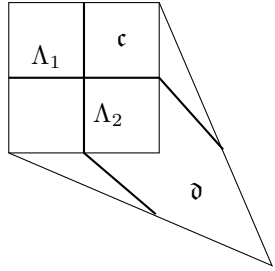 But then $\Lambda_1$ and $\Lambda_2$ intersect in the middle of the 2-cell strongly adjacent to $\mathfrak{d}$, so there are two points of intersection of $\Lambda_1$ and $\Lambda_2$, which is a contradiction.

\end{proof}

\begin{lem}\label{Lemma:Intersect_once_or_at_least_2_distinguished_cells}
Let $\gamma$ be a geodesic edge-path in $\cay^{(1)}$. Let $e$ be an edge in $\gamma$, and denote by $s$ its midpoint. Then either:
\begin{enumerate}[(a)]
\item There is a hypergraph intersecting $\gamma$ only in the point $s$.
\item There is a hypergraph $\Lambda$ such that its segment $\Lambda_{sx}$ starting in $s$ intersects $\gamma$ in two consecutive points: $s$ and $x$ such that there are at least two not sharing an edge distinguished 2-cells in $\Lambda_{sx}$ and $\Lambda$ does not intersect $\gamma$ anywhere else between points $s$ and $x$.
\end{enumerate}
\end{lem}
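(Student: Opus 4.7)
The plan is to look at the three hypergraphs passing through the midpoint $s$, namely the standard hypergraph $\Lambda^{\text{st}}$, the red hypergraph $\Lambda^{\text{red}}$, and the blue hypergraph $\Lambda^{\text{blue}}$, and to show that at least one of them realizes alternative (a) or (b). First I would observe that if any of $\Lambda^{\text{st}}, \Lambda^{\text{red}}, \Lambda^{\text{blue}}$ meets $\gamma$ only at $s$, then alternative (a) holds, so one may assume each of them meets $\gamma$ in at least one additional point.

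Next, I would pick a direction from $s$ in which $\Lambda^{\text{st}}$ hits $\gamma$ again, call $x$ the first such intersection, and apply Lemma \ref{Lemma:There_is_a_color_cell_in_diagram}(3) to produce a divided tile $\{R,B\}$ inside $\Lambda^{\text{st}}_{sx}$, with $R$ red and $B$ blue. If $\Lambda^{\text{st}}_{sx}$ contains any further distinguished $2$-cell $\mathfrak{c}$ outside the pair $\{R,B\}$, then $\mathfrak{c}$ belongs to a strongly adjacent pair different from $\{R,B\}$; Lemma \ref{Lemma:Adjacent_to_special_cell} (applied with $D=\mathfrak{c}$ and $E$ equal to either $R$ or $B$) then rules out a single shared edge, and of course $\mathfrak{c}$ is not strongly adjacent to $R$ or $B$. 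Thus $\mathfrak{c}$ shares no edge with either of $R,B$, and alternative (b) holds for $\Lambda^{\text{st}}$.

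The main work is the remaining situation, where $R$ and $B$ are the only distinguished cells of $\Lambda^{\text{st}}_{sx}$. The idea is a color-substitution step. Inspect the order in which $\Lambda^{\text{st}}$ crosses the divided tile between $s$ and $x$ and assume it enters through $B$ first (the other order is symmetric with $\Lambda^{\text{red}}$ and $\Lambda^{\text{blue}}$ exchanged). Since the red hypergraph connects opposite edges in every regular or blue $2$-cell exactly like $\Lambda^{\text{st}}$, the branch of $\Lambda^{\text{red}}$ from $s$ in the same direction follows $\Lambda^{\text{st}}$ through every regular cell and through $B$, then enters $R$ and turns there, exiting into a $2$-cell outside the carrier of $\Lambda^{\text{st}}_{sx}$. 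Hence both $R$ and $B$ appear in the carrier of that branch, as a divided tile of $\Lambda^{\text{red}}$. If the branch meets $\gamma$ again at some $y$, then Lemma \ref{Lemma:There_is_a_color_cell_in_diagram}(1) yields an isolated red $2$-cell $R_0$ inside $\Lambda^{\text{red}}_{sy}$; because $R$ is already part of a divided tile of that segment, $R_0 \neq R$, and $R_0$ lies in a strongly adjacent pair different from $\{R,B\}$. By the previous paragraph's argument applied to $R_0$ and $R$, they share no edge, giving alternative (b) for $\Lambda^{\text{red}}$.

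If this branch of $\Lambda^{\text{red}}$ does not meet $\gamma$, I would use that $\Lambda^{\text{red}}$ must meet $\gamma$ in the opposite direction (since alternative (a) has been excluded) and rerun the analysis on the opposite branch of $\Lambda^{\text{st}}$, falling back on the symmetric color-substitution with $\Lambda^{\text{blue}}$ whenever the traversal order there is reversed. The main obstacle I foresee is exactly this bookkeeping: one must check rigorously that the coincidence of the appropriate colored hypergraph with $\Lambda^{\text{st}}$ through the regular cells, and through the cell of the divided tile bearing the non-turning color, genuinely forces both cells of $\{R,B\}$ into the carrier of the colored branch (handling also the borderline case where the divided tile is encountered immediately at $s$); and that after alternative (a) has been excluded one can always arrange a direction in which both $\Lambda^{\text{st}}$ and the appropriate colored hypergraph meet $\gamma$ again, so that Lemma \ref{Lemma:There_is_a_color_cell_in_diagram} can be applied to the colored segment.
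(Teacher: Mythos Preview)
Your colour-substitution idea is different from the paper's argument, and the obstacle you flag at the end is a genuine gap that your fallback does not close. Once $\Lambda^{\text{red}}$ diverges from $\Lambda^{\text{st}}$ at the divided tile, nothing forces that particular branch of $\Lambda^{\text{red}}$ ever to meet $\gamma$; the re-intersection of $\Lambda^{\text{red}}$ with $\gamma$ guaranteed by excluding (a) may well occur along a completely unrelated branch from $s$, whose segment $\Lambda^{\text{red}}_{sy}$ need not contain $\{R,B\}$ at all, so Lemma~\ref{Lemma:There_is_a_color_cell_in_diagram}(1) only yields one distinguished cell there. Your fallback (``rerun on the opposite branch of $\Lambda^{\text{st}}$'') presupposes both that $\Lambda^{\text{st}}$ re-meets $\gamma$ on that other side and that the colour-substitution there lands on a branch that does meet $\gamma$; neither is guaranteed, so the argument can cycle without ever producing a coloured segment that simultaneously re-meets $\gamma$ and contains a known divided tile.

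The paper avoids this by a pigeonhole step rather than branch-matching. It takes, for each of the three hypergraphs dual to $e$, the point of $\gamma$ closest to $s$ where that hypergraph re-intersects $\gamma$; two of these, say $x$ and $y$ (for $\Lambda^x,\Lambda^y$), lie on the same side of $s$ with $y$ no farther than $x$. After showing $y\neq x$ (else the two segments would coincide up to $2$-collared pieces, contradicting Lemma~\ref{Lemma:There_is_a_color_cell_in_diagram} for the coloured one), the point $y$ sits on $\gamma_{sx}$, so Lemma~\ref{Lemma:Geodesic_breaks_away_from_carrier} places it on the boundary of some cell of $\text{Car}(\Lambda^x_{sx})$, and Lemma~\ref{Lemma:Regular_Cell_Intersection_Once} forces that cell $\mathfrak{c}_y$ to be distinguished. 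If $\mathfrak{c}_y$ is red and one of $\Lambda^x,\Lambda^y$ is blue, that hypergraph's segment contains $\mathfrak{c}_y$ and an isolated blue cell, giving (b); the two residual cases ($\{\Lambda^x,\Lambda^y\}=\{\text{red},\text{standard}\}$) are handled by drawing the explicit shape of a carrier with a single turning cell and checking that every candidate position for $y$ contradicts Lemma~\ref{Lemma:There_is_a_color_cell_in_diagram} for the other hypergraph. The key point you are missing is that one never needs the two hypergraphs to share an initial branch from $s$; one uses instead that the inner re-intersection point lies inside the outer hypergraph's collaring region.
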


\begin{proof}
If one of the hypergraphs (red, blue or standard) dual to the edge $e$ intersects $\gamma$ only once, the statement is satisfied. Hence, consider the case where all hypergraphs dual to the edge $e$ intersect $\gamma$ at least twice. Denote by $x$, $y$ and $z$ the nearest points to $s$ (in the edge-path metric in $\cay^{(1)}$), in which three hypergraphs $\Lambda^{x}$, $\Lambda^{y}$ and $\Lambda^{z}$ dual to the edge $e$ intersect $\gamma$. A priori, it may happen that some of these points are equal. Without loss of generality, we can assume that $x$ and $y$ lie in the same connected component of $\gamma - s$ and that $x$ is at least as far from $s$ as $y$.  

Now, we will prove that $y \neq x$. Suppose, on the contrary, that $x=y$. This means that every time segments $\Lambda_{sx}$ and $\Lambda_{sy}$ split, they also rejoin at some point, since they end in the same point. Every situation where $\Lambda_{sx}$ and $\Lambda_{sy}$ split an rejoin gives us a 2-collared diagram collared by a subsegments of $\Lambda_{sx}$ and $\Lambda_{sy}$. By Lemma \ref{Lemma:Regular_Cell_Intersection_Once} every 2-collared diagram is a pair of strongly adjacent 2-cells. This means that if $\Lambda_{sx}$ and $\Lambda_{sy}$ split they rejoin in the next 2-cell of the common carrier, see Figure \ref{Figure:2collared_diagrams}. This means that $\text{Car}(\Lambda_{sx})=\text{Car}(\Lambda_{sy})$ and that distinguished 2-cells $\text{Car}(\Lambda_{sx})$ occur only in strongly adjacent pairs.

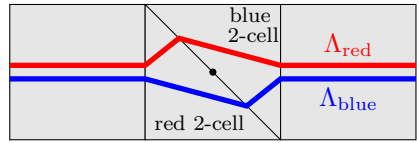
\begin{figure}[h]
\centering
\begin{tikzpicture}[scale=0.9]

\filldraw[draw=black,fill=gray!20] (-2,0) rectangle (0,2);
\filldraw[draw=black,fill=gray!20] (0,0) rectangle (2,2);
\filldraw[draw=black,fill=gray!20] (2,0) rectangle (4,2);


\path[black, draw] (0,2) -- (2,0);
\path[red, draw,line width=2pt] (-2,1.1) -- (0,1.1)--(0.5,1.5)--(2,1.1)--(4,1.1);
\path[blue, draw,line width=2pt] (-2,0.9) -- (0,0.9)--(1.5,0.5)--(2,0.9)--(4,0.9);

\fill (1,1) circle (1.5pt);

\draw (3,1.1) node[above] { ${\color{red} \Lambda_{\text{red}}}$};
\draw (3,0.9) node[below] { ${\color{blue} \Lambda_{\text{blue}}}$};

\draw (0,0) node[above right] {\footnotesize{red 2-cell}};
\draw (1.55,2.1) node[below] {\footnotesize{blue}};
\draw (1.6,1.8) node[below] {\footnotesize{2-cell}};

\end{tikzpicture}
\caption{The only possible way of splitting and rejoining of $\Lambda_x$ and $\Lambda_y$.}
\label{Figure:2collared_diagrams}
\end{figure}

One of the hypergraphs $\Lambda_x$ and $\Lambda_y$ is red or blue, so by Lemma \ref{Lemma:There_is_a_color_cell_in_diagram} in the carrier of $\Lambda_{sx}$ exists an isolated distinguished 2-cell, so this is a contradiction. Therefore $y \neq x$.

Therefore, $y$ lies on a geodesic edge-path that intersects $\Lambda_{sx}$ twice. Hence, by Lemma \ref{Lemma:Geodesic_breaks_away_from_carrier} we know that $y$ lies in the boundary of the carrier of $\Lambda_{sx}$ (see Figure \ref{Figure:Intersect_once_or_at_least_2_distnguished_cells_Fig2} a) or $y$ lies the boundary of the house diagram $\mathcal{F}$ presented in Figure  \ref{Figure:Intersect_once_or_at_least_2_distnguished_cells_Fig2} b).
\begin{figure}[h]
\centering
\begin{tikzpicture}[scale=0.9]

\filldraw[draw=black,fill=gray!20] (-5,-1) rectangle (-3,1);


\path[red, draw,line width=2pt] (-5,0) -- (-3,0);
\draw (-3.5,-0.25) node { ${\color{red} \Lambda_x}$};

\draw[draw,line width=1.5pt, dashed] (-6,-0.5) -- (-5,-1) -- (-3,-1)--(-2,-0.5);

\path[black, draw,line width=2pt] (-5,-1) -- (-3,-1);

\draw (-5, 1) node[below right] {$\mathfrak{c}$};
\fill (-4,-1) circle (2.5pt) node[below] {$y$};
\draw (-5.4,-0.75) node[below left] {$\gamma_{sx}$};
\draw (-4,-3) node {a)};


\fill[fill=gray!20] (0,-1)--(2,-2.5)--(4,-1)--cycle;
\filldraw[draw=black,fill=gray!20] (0,-1) rectangle (2,1);
\filldraw[draw=black,fill=gray!20] (2,-1) rectangle (4,1);

\path[red, draw,line width=2pt] (0,0) -- (4,0);
\draw (3.5,-0.25) node { ${\color{red} \Lambda_x}$};

\path[blue, draw,line width=2pt] (1,1) -- (1,-1) -- (3, -1.75);
\draw (1.3,0.7) node { ${\color{blue} \Lambda_y}$};

\draw[draw,line width=1.5pt, dashed] (-1,-1) -- (0,-1) -- (2,-2.5) -- (4,-1) -- (5,-1.5);

\path[black, draw,line width=2pt] (2,-2.5) -- (4,-1);

\draw (0, 1) node[below right] {$\mathfrak{c}_1$};
\draw (2, 1) node[below right] {$\mathfrak{c}_2$};
\fill (3,-1.75) circle (2.5pt) node[below right] {$y$};
\draw (1,-1.75) node[below left] {$\gamma_{sx}$};
\draw (1,-3) node {b)};
\draw (3.6,-2.5) node {$\mathcal{F}$};

\end{tikzpicture}
\caption{Two possible locations of $y$.}
\label{Figure:Intersect_once_or_at_least_2_distnguished_cells_Fig2}
\end{figure}
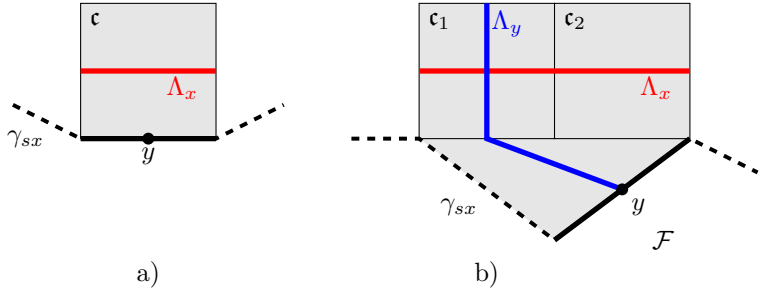

The hypergraphs $\Lambda_{x}$ and $\Lambda_{y}$ cannot intersect transversely in a regular 2-cell $\mathfrak{c}_1$, since otherwise by Lemma \ref{Lemma:Regular_Cell_Intersection_Once} they would intersect only once in the middle of $\mathfrak{c}_1$, and we know that they intersect also in $s$. Hence $y$ lies in the boundary of a distinguished 2-cell $\mathfrak{c}_y \in \text{Car}(\Lambda_{sx})$. Therefore the carriers of the segments $\Lambda_{sx}$ and $\Lambda_{sy}$ share a common distinguished 2-cell $\mathfrak{c}_y$.

Without loss of generality, we can suppose that $\mathfrak{c}_y$ is red (if it is blue the argument is analogous).

If $\Lambda^x$ or $\Lambda^y$ is blue then, according to Lemma \ref{Lemma:There_is_a_color_cell_in_diagram}, its carrier contains an isolated blue 2-cell $\mathfrak{c}_b$,  so $\mathfrak{c}_b$ does not share an edge with $\mathfrak{c}_y$. In that case the carrier of $\Lambda^x$ or $\Lambda^y$ contains a red 2-cell $\mathfrak{c}_y$ and some non-adjacent by an edge to it blue 2-cell $\mathfrak{c}_b$, so we are done.

The only cases left to consider are:
\begin{enumerate}
\item $\Lambda^y$ is a red hypergraph and $\Lambda^x$ is a standard hypergraph
\item $\Lambda^x$ is a red hypergraph and $\Lambda^y$ is a standard hypergraph.
\end{enumerate}

First consider the situation 1). If there are at least two red 2-cells in the carrier of $\Lambda_{sx}$, we are done. Hence, we can suppose that $\mathfrak{c}_y$ is the only isolated red 2-cell in the carrier of $\Lambda_{sx}$. In that case $\text{Car}(\Lambda_{sx})$ has the shape presented in Figure \ref{Figure:Shape_of_carriers1}. We know that $y$ lies  in the boundary of $\mathfrak{c}_y$ and $y$ does not belong to $\Lambda_{sx}$. Hence $y$ = $y_1$ or $y=y_2$. Recall that points $s, x$ and $y$ lie on $\gamma$. If $y=y_2$ then there is a segment of standard hypergraph bounded by $s$ and $y$ intersecting $\gamma$ twice at its ends, such that there is only one distinguished 2-cell in its carrier. This contradicts Lemma \ref{Lemma:There_is_a_color_cell_in_diagram} (3). If $y=y_2$ we obtain analogous contradiction. Hence there must be at least one more isolated red 2-cell in the carrier of $\Lambda_{sx}$, so we are done, since two red 2-cells cannot share an edge.

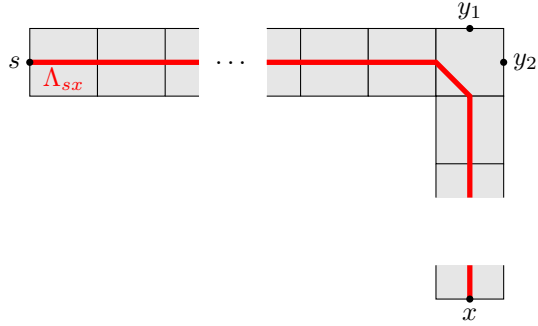
\begin{figure}[h]
\centering
\begin{tikzpicture}[scale=0.9]


\fill[fill=gray!20] (4,0) rectangle (4.5,1);
\fill[fill=gray!20] (5.5,0) rectangle (6,1);

\filldraw[draw=black,fill=gray!20] (2,0) rectangle (3,1);
\filldraw[draw=black,fill=gray!20] (3,0) rectangle (4,1);

\filldraw[draw=black,fill=gray!20] (6,0) rectangle (7,1);
\filldraw[draw=black,fill=gray!20] (7,0) rectangle (8,1);
\filldraw[draw=black,fill=gray!20] (8,0) rectangle (9,1);

\fill[fill=gray!20] (8,-1.5) rectangle (9,-1);
\fill[fill=gray!20] (8,-3) rectangle (9,-2.5);

\filldraw[draw=black,fill=gray!20] (8,-1) rectangle (9,0);

\draw (4,0)-- (4.5,0);
\draw (4,1)-- (4.5,1);
\draw (5.5,0)-- (6,0);
\draw (5.5,1)-- (6,1);

\draw (8,-1.5)-- (8,-1);
\draw (9,-1.5)-- (9,-1);
\draw (8,-2.5)-- (8,-3) -- (9,-3)--(9,-2.5);


\path[red, draw,line width=2pt] (2,0.5) -- (4.5,0.5);
\path[red, draw,line width=2pt] (5.5,0.5) -- (8,0.5)--(8.5,0)--(8.5,-1.5);
\path[red, draw,line width=2pt] (8.5,-2.5)--(8.5,-3);

\draw (2.5,0.25) node { ${\color{red} \Lambda_{sx}}$};

\draw (5,0.5) node {$\dots$};

\fill (8.5,1) circle (1.5pt) node[above] {$y_1$};
\fill (9,0.5) circle (1.5pt) node[right] {$y_2$};

\fill (2,0.5) circle (1.5pt) node[left] {$s$};
\fill (8.5,-3) circle (1.5pt) node[below] {$x$};

\end{tikzpicture}
\caption{The carrier of $\Lambda_{sx}$ in case where $\Lambda_x$ is red.}
\label{Figure:Shape_of_carriers1}
\end{figure}

\begin{figure}[h]
\centering
\begin{tikzpicture}[scale=0.9]


\fill[fill=gray!20] (4,0) rectangle (4.5,1);
\fill[fill=gray!20] (5.5,0) rectangle (6,1);

\filldraw[draw=black,fill=gray!20] (2,0) rectangle (3,1);
\filldraw[draw=black,fill=gray!20] (3,0) rectangle (4,1);

\filldraw[draw=black,fill=gray!20] (6,0) rectangle (7,1);
\filldraw[draw=black,fill=gray!20] (7,0) rectangle (8,1);
\filldraw[draw=black,fill=gray!20] (8,0) rectangle (9,1);

\fill[fill=gray!20] (8,-1.5) rectangle (9,-1);
\fill[fill=gray!20] (8,-3) rectangle (9,-2.5);

\filldraw[draw=black,fill=gray!20] (8,-1) rectangle (9,0);

\draw (4,0)-- (4.5,0);
\draw (4,1)-- (4.5,1);
\draw (5.5,0)-- (6,0);
\draw (5.5,1)-- (6,1);

\draw (8,-1.5)-- (8,-1);
\draw (9,-1.5)-- (9,-1);
\draw (8,-2.5)-- (8,-3) -- (9,-3)--(9,-2.5);

\draw (8,0)-- (9,1);


\path[red, draw,line width=2pt] (2,0.5) -- (4.5,0.5);
\path[red, draw,line width=2pt] (5.5,0.5) -- (8,0.5)--(8.75,0.75)--(8.5,0)--(8.5,-1.5);
\path[red, draw,line width=2pt] (8.5,-2.5)--(8.5,-3);

\draw (2.5,0.25) node { ${\color{red} \Lambda_{sx}}$};

\draw (5,0.5) node {$\dots$};

\fill (8.5,1) circle (1.5pt) node[above] {$y_3$};
\fill (9,0.5) circle (1.5pt) node[right] {$y_4$};

\fill (2,0.5) circle (1.5pt) node[left] {$s$};
\fill (8.5,-3) circle (1.5pt) node[below] {$x$};

\fill (8.5,0.5) circle (1.5pt);

\end{tikzpicture}
\caption{The carrier of $\Lambda_{sx}$ in case where $\Lambda_x$ is standard hypergraph.}
\label{Figure:Shape_of_carriers2}
\end{figure}
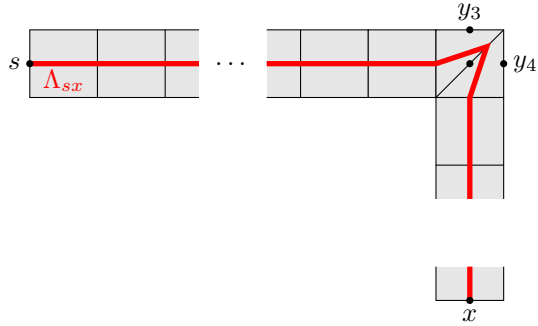

Now consider situation 2), that is $\Lambda^x$ is a standard hypergraph and there is only one divided tile of $\Lambda_{sx}$ and y belongs to this divided tile. In that case $\text{Car}(\Lambda_{sx})$ has the shape presented in Figure \ref{Figure:Shape_of_carriers2}. We know that $y$ lies in the boundary of $\mathfrak{c}_y$ but $y$ does not belong to $\Lambda_{sx}$. Moreover by \ref{Lemma:There_is_a_color_cell_in_diagram} part (3) we know that $y$ does not lie on an internal edge of a divided tile. Hence $y=y_3$ or $y=y_4$. If $y=y_3$ then there is a segment of a red hypegraph bounded by $x$ and $y$ intersecting $\gamma$ twice with no isolated red 2-cell in its carrier. This contradicts Lemma \ref{Lemma:There_is_a_color_cell_in_diagram} part (1). If $y=y_4$ we obtain the contradiction analogously. Hence, there must be at least two not sharing an edge 2-cells in the carrier of $\Lambda_{sx}$.



\end{proof}

\begin{theo}\label{Theorem:Many_good_hypergraphs}
Let $\gamma$ be a geodesic edge-path in the 1-skeleton of $\cay$ of length at least 21. Let $E:=\{e_{-7}, e_{-6}, \dots, e_{0}, \dots, e_{7}\}$ be the set of 15 consecutive edges of $\gamma$. Then there is a hypergraph in $\cay$ intersecting $\gamma$ exactly once and such that this intersection is the midpoint of one of the edges in $E$.
\end{theo}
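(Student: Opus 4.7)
The plan is to argue by contradiction. Suppose that no edge $e_i \in E$ has a hypergraph intersecting $\gamma$ only at its midpoint $s_i$. By Lemma \ref{Lemma:Intersect_once_or_at_least_2_distinguished_cells}, every such edge satisfies case (b): for each $i \in \{-7, \ldots, 7\}$ there is a hypergraph $\Lambda^i$ through $e_i$ whose next intersection with $\gamma$ is at a point $x_i \in \gamma$, and the segment $\Lambda^i_{s_i x_i}$ contains at least two distinguished 2-cells that share no edge. Record for each $i$ a sign $\sigma_i \in \{+,-\}$ indicating whether $x_i$ lies to the right or to the left of $s_i$ along $\gamma$. By pigeonhole at least $8$ indices $i_1 < i_2 < \cdots < i_8$ carry the same sign; without loss of generality take $\sigma_{i_k} = +$ for all $k$.

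For each selected $i_k$, apply Lemma \ref{Lemma:Diagram_collared_by_path_and_hypergraph} to produce a diagram with horns $\mathcal{D}_k$ collared by the segment $\Lambda^{i_k}_{s_{i_k} x_{i_k}}$ and the subpath $\gamma_k \subset \gamma$ from $s_{i_k}$ to $x_{i_k}$. Each $\mathcal{D}_k$ contains at least two strongly adjacent pairs of 2-cells, inherited from the two non-edge-sharing distinguished 2-cells guaranteed by case (b). Because each $\gamma_k$ starts inside $E$ and $\gamma$ is a geodesic (so $\gamma_k$ cannot be too long compared to the carrier, by the straight-segment argument of Lemma \ref{Lemma:There_is_a_color_cell_in_diagram} applied between bends), all the $\gamma_k$'s lie in a bounded window of $\gamma$ of size comparable to $|E|=15$.

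Next, glue the $\mathcal{D}_k$'s along their shared portions of $\gamma$ to form a single diagram $\mathcal{D}$. The overlaps of the carriers of distinct $\Lambda^{i_k}, \Lambda^{i_{k'}}$ are tightly constrained: by Lemma \ref{Lemma:Regular_Cell_Intersection_Once} two distinct hypergraphs cross transversely in at most one regular 2-cell, and by Lemma \ref{Lemma:Shape_of_2_collared} any 2-collared diagram they co-bound is a single strongly adjacent pair. Thus the total 2-cell count $|\mathcal{D}|$ accumulates at least $2 \cdot 8 = 16$ distinguished 2-cells while the generalized boundary $|\tilde{\partial}\mathcal{D}|$ comes essentially from the two extreme ends of $\gamma \cap \mathcal{D}$ together with controlled contributions from the horn gluings and unused parts of the carriers. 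Plugging the resulting estimate into Theorem \ref{Theorem:Generalized_Isoperimetric_Inequality} at density $d < \frac{3}{10}$, the ratio $|\tilde{\partial}\mathcal{D}|/|\mathcal{D}|$ falls strictly below $4(1 - 2d - \varepsilon)$ for sufficiently small $\varepsilon$, yielding the desired contradiction.

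The principal obstacle is the combinatorial bookkeeping in the merged diagram $\mathcal{D}$. The carriers of the $\Lambda^{i_k}$'s bend at each distinguished 2-cell (as exemplified by the house diagrams of Lemma \ref{Lemma:Geodesic_breaks_away_from_carrier} and the shapes in Figures \ref{Figure:Shape_of_carriers1}--\ref{Figure:Shape_of_carriers2}), and they can overlap in several strongly adjacent pairs, so counting both $|\mathcal{D}|$ and $|\tilde{\partial}\mathcal{D}|$ requires a careful case analysis of which 2-cells are shared. The choice of exactly $15$ consecutive edges is dictated by this count: one needs enough non-edge-sharing distinguished pairs in the merged diagram to push the generalized boundary/area ratio below $\tfrac{8}{5}$, the critical threshold at $d = \tfrac{3}{10}$; any shorter window would not provide enough distinguished 2-cells to force the isoperimetric violation.
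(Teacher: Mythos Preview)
Your approach has a genuine gap and diverges substantially from what the paper does. The central unjustified step is the claim that ``all the $\gamma_k$'s lie in a bounded window of $\gamma$ of size comparable to $|E|=15$''. Nothing in Lemma~\ref{Lemma:Intersect_once_or_at_least_2_distinguished_cells} bounds the distance from $s_{i_k}$ to $x_{i_k}$; the segment $\Lambda^{i_k}_{s_{i_k}x_{i_k}}$ is guaranteed to contain two non-edge-sharing distinguished $2$-cells, but it can be arbitrarily long, and the corresponding subarc $\gamma_k$ of $\gamma$ can likewise be arbitrarily long. Without this bound your merged diagram $\mathcal{D}$ has no controlled boundary, and the intended isoperimetric contradiction evaporates. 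Even granting a bound, the count ``$|\tilde\partial\mathcal{D}|$ comes essentially from the two extreme ends of $\gamma\cap\mathcal{D}$'' is not argued: each $\mathcal{D}_k$ contributes the full outer boundary of the carrier of $\Lambda^{i_k}_{s_{i_k}x_{i_k}}$ on the side away from $\gamma$, and you give no mechanism by which these contributions cancel under gluing.

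The paper's proof is completely different in structure and is \emph{constructive} rather than by global contradiction. It works only from the single central edge $e_0$. If no hypergraph through $e_0$ separates, Lemma~\ref{Lemma:Intersect_once_or_at_least_2_distinguished_cells} produces one hypergraph $\Lambda$ with segment $\Lambda_{sx}$ containing two non-edge-sharing distinguished cells. The key move you are missing is to pass to a \emph{transverse} hypergraph: one picks a suitable regular $2$-cell $\mathfrak{c}$ in $\mathrm{Car}(\Lambda_{sx})$ (either the $3$rd--$5$th cell if $|\mathrm{Car}(\Lambda_{sx})|\geq 7$, or a regular cell strictly between the two distinguished cells otherwise) and takes the standard hypergraph $\Lambda^{\mathfrak c}$ through $\mathfrak c$ dual to $\Lambda$. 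Lemma~\ref{Lemma:Regular_Cell_Intersection_Once} forces $\Lambda$ and $\Lambda^{\mathfrak c}$ to cross exactly once, and a short case analysis (splitting $\Lambda^{\mathfrak c}$ into $\Lambda_+,\Lambda_-$ by the two sides of $\Lambda$) shows $\Lambda^{\mathfrak c}$ meets $\gamma$ exactly once, at a point within distance $\leq 7$ of $s$. The window of $15$ edges enters only here, to guarantee that this single intersection point lands in $E$; it is not used for a pigeonhole over all fifteen edges.
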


\begin{proof}
Denote the midpoint of the edge $e_0$ by $s$. If one of the hypergraphs dual to the edge $e_0$ intersects $\gamma$ only once, we are done. If not, by Lemma \ref{Lemma:There_is_a_color_cell_in_diagram} we know that there exists a hypergraph $\Lambda$, such that its segment $\Lambda_{sx}$ intersects $\gamma$ in two consecutive points of $\gamma$: $s$ and $x$ and that the segment $\Lambda_{sx}$ contains at least two not sharing an edge distinguished 2-cells: $\mathcal{d}_1$ and $\mathcal{d}_2$. We will consider two cases:

\begin{enumerate}
\item  There are at least seven 2-cells in $\text{Car}(\Lambda_{sx})$
\item  There are at most six 2-cells in $\text{Car}(\Lambda_{sx})$.
\end{enumerate}

We will use slightly different methods to deal with each of these two cases.

\subsection{Case 1: There are at least seven 2-cells in $\text{Car}(\Lambda_{sx})$}

Let $\mathfrak{c}_1, \mathfrak{c}_2, \mathfrak{c}_3$ be the 3-th, 4-th and 5-th 2-cells in $\Lambda_{sx}$ respectively counted from $s$. According to Lemma \ref{Lemma:Adjacent_to_special_cell} at least one of them must be a regular 2-cell, call it $\mathfrak{c}$. Denote by $\Lambda^{\mathfrak{c}}$ the standard hypergraph passing through 2-cell $\mathfrak{c}$ and dual to $\Lambda$. By Lemma \ref{Lemma:Regular_Cell_Intersection_Once} $\Lambda^{\mathfrak{c}}$ and $\Lambda$ intersect only once. We will prove two assertions:
\begin{enumerate}[(a)]
\item $\Lambda^{\mathfrak{c}}$ intersects some edge in $E$. \label{Point:a}
\item $\Lambda^{\mathfrak{c}}$ intersects $\gamma$ exactly once.
\label{Point:b}
\end{enumerate}

We start with the proof of \textbf{assertion (\ref{Point:a})}. Let us resolve where is the point of intersection of $\gamma$ and $\Lambda^{\mathfrak{c}}$. If there is an edge $e_{\mathfrak{c}}$ of $\gamma$ lying in the boundary of $\mathfrak{c}$ then $\Lambda^{\mathfrak{c}}$ and $\gamma$ intersect in the midpoint of $e_{\mathfrak{c}}$. In this case denote by $x_{\mathfrak{c}}$ the midpoint of $e_{\mathfrak{c}}$, see Figure \ref{Figure:Two_ways_of_intersection_of_Lambda_c_and_gamma} a). In that case there is an edge-path in $\cay^{(1)}$ that goes along the boundary of $\text{Car}(\Lambda)$ of length at most 6, because there are at most five 2-cells in $\text{Car}(\Lambda_{sx})$ between $s$ and $\mathfrak{c}$. Hence $d_{\gamma}(s,x_{\mathfrak{c}}) \leq 6$.

If there is no edge of $\gamma$ lying in the boundary of $\mathfrak{c}$ then, by Lemma \ref{Lemma:Geodesic_breaks_away_from_carrier} we know that the 2-cell $\mathfrak{c}$ lies in the house diagram presented in Figure \ref{Figure:Two_ways_of_intersection_of_Lambda_c_and_gamma} b) so $\Lambda_{\mathfrak{c}}$ intersects the edge $e_{\mathfrak{c}}$ from Figure \ref{Figure:Two_ways_of_intersection_of_Lambda_c_and_gamma} b). In that case we denote by the $x_{\mathfrak{c}}$ the midpoint of $e_{\mathfrak{c}}$. In this situation, there is an edge-path in $\cay^{(1)}$ that goes along the boundary of $\text{Car}(\Lambda)$ and a boundary of a house diagram containing $\mathfrak{c}$ of length at most 7, because there are at most five 2-cells in $\text{Car}(\Lambda_{sx})$ between $s$ and $\mathfrak{c}$. Hence $d_{\gamma}(s,x_{\mathfrak{c}}) \leq 7$.

This ends the proof of assertion (\ref{Point:a}).
\begin{figure}[h]
\centering
\begin{tikzpicture}[scale=0.9]

\filldraw[draw=black,fill=gray!20] (-5,-1) rectangle (-3,1);

\path[red, draw,line width=2pt] (-5,0) -- (-3,0);
\draw (-3.3,-0.25) node { ${\color{red} \Lambda}$};

\path[blue, draw,line width=2pt] (-4,1) -- (-4,-1);
\draw (-3.7,0.7) node { ${\color{blue} \Lambda^{\mathfrak{c}}}$};

\draw[draw,line width=1.5pt, dashed] (-6,-0.5) -- (-5,-1) -- (-3,-1)--(-2,-0.5);

\path[black, draw,line width=2pt] (-5,-1) -- (-3,-1);

\draw (-5, 1) node[below right] {$\mathfrak{c}$};
\draw (-4.7,-1) node[below] {$e_{\mathfrak{c}}$};
\fill (-4,-1) circle (2.5pt) node[below] {$x_{\mathfrak{c}}$};
\draw (-5.4,-0.75) node[below left] {$\gamma$};
\draw (-4,-3) node {a)};


\fill[fill=gray!20] (0,-1)--(2,-2.5)--(4,-1)--cycle;
\filldraw[draw=black,fill=gray!20] (0,-1) rectangle (2,1);
\filldraw[draw=black,fill=gray!20] (2,-1) rectangle (4,1);

\path[red, draw,line width=2pt] (0,0) -- (4,0);
\draw (3.5,-0.25) node { ${\color{red} \Lambda}$};

\path[blue, draw,line width=2pt] (1,1) -- (1,-1) -- (3, -1.75);
\draw (1.3,0.7) node { ${\color{blue} \Lambda^{\mathfrak{c}}}$};

\draw[draw,line width=1.5pt, dashed] (-1,-1) -- (0,-1) -- (2,-2.5) -- (4,-1) -- (5,-1.5);

\path[black, draw,line width=2pt] (2,-2.5) -- (4,-1);

\draw (0, 1) node[below right] {$\mathfrak{c}$};
\draw (2.2,-2.25) node[below right] {$e_{\mathfrak{c}}$};
\fill (3,-1.75) circle (2.5pt) node[below right] {$x_{\mathfrak{c}}$};
\draw (1,-1.75) node[below left] {$\gamma$};
\draw (1,-3) node {b)};

\end{tikzpicture}
 \caption{Two possible ways of intersection of $\Lambda_{\mathfrak{c}}$ and $\gamma$}
 \label{Figure:Two_ways_of_intersection_of_Lambda_c_and_gamma}
\end{figure}
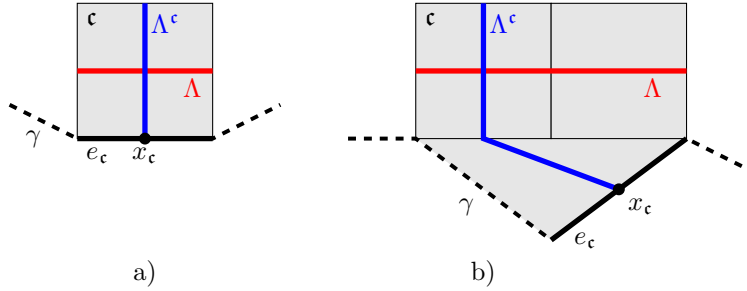

Now we will prove \textbf{assertion (\ref{Point:b})}. By Lemma \ref{Lemma:Connected_components} there are exactly two connected components of $\cay - \Lambda$. Hence, we can split $\Lambda^{\mathfrak{c}}$ into two parts:
\begin{itemize}
\item $\Lambda_+$ which is the
 intersection of $\Lambda^{\mathfrak{c}}$ with the component of $\cay - \Lambda$ containing $x_{\mathfrak{c}}$
\item $\Lambda_-$ which is the intersection of $\Lambda^{\mathfrak{c}}$ with the component of $\cay - \Lambda$ not containing $x_{\mathfrak{c}}$.
\end{itemize}

We will show that $\Lambda_-$ does not intersect $\gamma$ and that $\Lambda_+$ intersects $\gamma$ only once (in the point $x_{\mathfrak{c}}$).

Firstly, consider $\Lambda_+$. Suppose, on the contrary, that $\Lambda_{+}$ intersects $\gamma$ in some point $y_\mathfrak{c} \neq x_{\mathfrak{c}}$. Suppose  that $y_{\mathfrak{c}}$ is the nearest among what point to $x_{\mathfrak{c}}$ in the metric $d_{\gamma}$. Points $x_c$ and $y_c$ cannot both lie in the boundary of $\mathfrak{c}$ since there is at most one common edge of $\mathfrak{c}$ and $\gamma$.

We will consider three cases:

\begin{enumerate}
\item $y_{\mathfrak{c}}$ belongs to the segment $\gamma_{sx}$ of $\gamma$ bounded by $s$ and $x$, \label{case_1}, see Figure \ref{Figure:Cases} (1)
\item $y_{\mathfrak{c}}$ does not belong to $\gamma_{sx}$ and points $x_{\mathfrak{c}}$ and $y_{\mathfrak{c}}$ lie in the different connected components of $\cay - \Lambda$, \label{case_2}, see Figure \ref{Figure:Cases} (2)
\item $y_{\mathfrak{c}}$ does not belong to $\gamma_{sx}$ and points $x_{\mathfrak{c}}$ and $y_{\mathfrak{c}}$ lie in the same connected component of $\cay - \Lambda$. \label{case_3}, see Figure \ref{Figure:Cases} (3)
\end{enumerate}

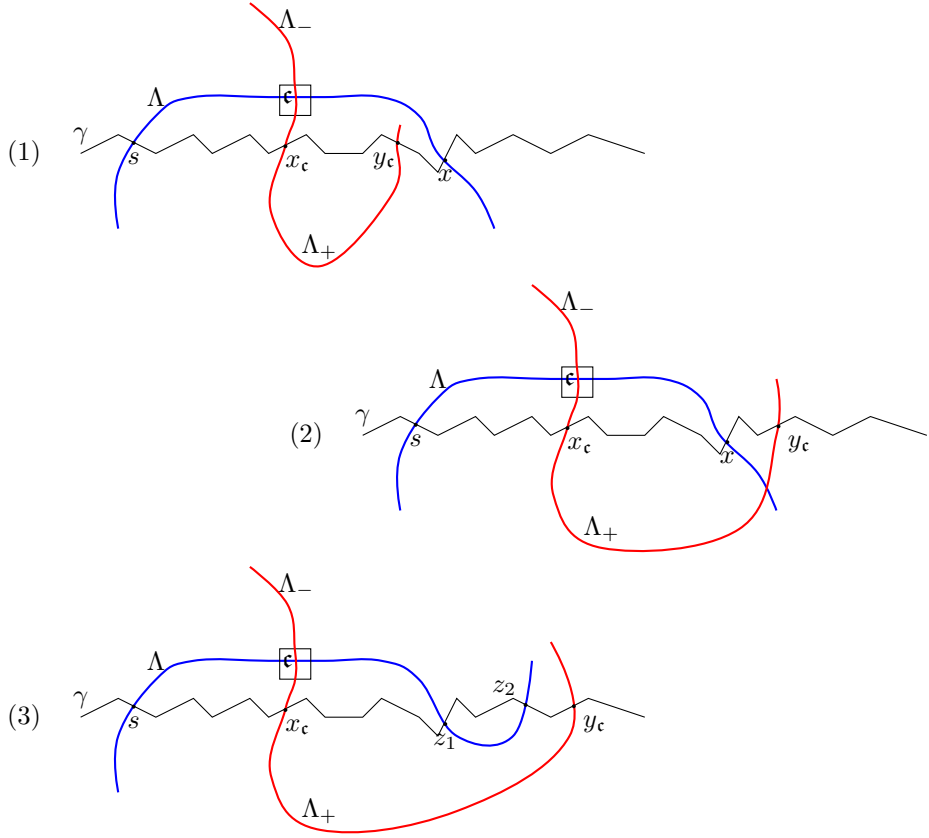
\begin{figure}[h!]
\centering
\begin{tikzpicture}[scale=0.25]

\draw (-32,22) -- (-30,23) -- (-28,22) -- (-26,23) -- (-25,22) -- (-23,23) -- (-22,22) -- (-20,23) -- (-19,22) -- (-17,22) -- (-16,23) -- (-14,22) -- (-13,21) -- (-12,23) -- (-11,22) -- (-9,23) -- (-7,22) -- (-5,23) -- (-2,22);
\draw [blue, thick] plot[smooth, tension=.7] coordinates {(-30,18) (-30,21) (-28,24) (-26,25) (-22,25) (-19,25) (-16,25) (-14,24) (-13,22) (-11,20) (-10,18)};
\draw  (-21.4142,25.6401) rectangle (-19.7622,24.0232);
\draw [red, thick]  plot[smooth, tension=.7] coordinates {(-23,30) (-21,28) (-20.6104,25.625) (-20.5906,24.0179) (-21.1263,22.3711) (-21.8604,18.8196) (-19.2613,16.0022) (-15.432,19.8315) (-15.174,22.5695) (-14.9955,23.5219)};

\node at (-19.3,17) {$\Lambda_+$};
\node at (-28,25) {$\Lambda$};
\node at (-21,25) {$\mathfrak{c}$};

\node[below right] at (-21.6106,22.384) {$x_\mathfrak{c}$};
\fill (-21.1106,22.384) circle (3pt);

\node[below left] at (-14.7352,22.5628) {$y_\mathfrak{c}$};
\fill (-15.1352,22.5628) circle (3pt);

\node[below] at (-29.1811,22.5308) {$s$};
\fill (-29.1811,22.5608) circle (3pt);

\node[below] at (-12.6109,21.6361) {$x$};
\fill (-12.6109,21.6361) circle (3pt);

\node[above] at (-32,22) {$\gamma$};
\node[right] at (-22,29) {$\Lambda_-$};

\node at (-35,22) {(1)};


\begin{scope}[shift={(15,-15)}]

\draw (-32,22) -- (-30,23) -- (-28,22) -- (-26,23) -- (-25,22) -- (-23,23) -- (-22,22) -- (-20,23) -- (-19,22) -- (-17,22) -- (-16,23) -- (-14,22) -- (-13,21) -- (-12,23) -- (-11,22) -- (-9,23) -- (-7,22) -- (-5,23) -- (-2,22);
\draw [blue, thick]  plot[smooth, tension=.7] coordinates {(-30,18) (-30,21) (-28,24) (-26,25) (-22,25) (-19,25) (-16,25) (-14,24) (-13,22) (-11,20) (-10,18)};
\draw  (-21.4142,25.6401) rectangle (-19.7622,24.0232);
\draw [red, thick] plot[smooth, tension=.7] coordinates {(-23,30) (-21,28) (-20.6104,25.625) (-20.5906,24.0179) (-21.1263,22.3711) (-21.8604,18.8196) (-19.2613,16.0022) (-12,17) (-10,22) (-10,25)};
[blue, draw,line width=2pt]

\node at (-19.3,17) {$\Lambda_+$};
\node at (-28,25) {$\Lambda$};
\node at (-21,25) {$\mathfrak{c}$};

\node[below right] at (-21.6106,22.384) {$x_\mathfrak{c}$};
\fill (-21.1106,22.384) circle (3pt);

\node[below right] at (-9.8898,22.4767) {$y_\mathfrak{c}$};
\fill (-9.8898,22.4767) circle (3pt);

\node[below] at (-29.1811,22.5308) {$s$};
\fill (-29.1811,22.5608) circle (3pt);

\node[below] at (-12.6109,21.6361) {$x$};
\fill (-12.6109,21.6361) circle (3pt);

\node[above] at (-32,22) {$\gamma$};
\node[right] at (-22,29) {$\Lambda_-$};

\node at (-35,22) {(2)};
\end{scope}

\begin{scope}[shift={(0,-30)}]

\draw (-32,22) -- (-30,23) -- (-28,22) -- (-26,23) -- (-25,22) -- (-23,23) -- (-22,22) -- (-20,23) -- (-19,22) -- (-17,22) -- (-16,23) -- (-14,22) -- (-13,21) -- (-12,23) -- (-11,22) -- (-9,23) -- (-7,22) -- (-5,23) -- (-2,22);
\draw [blue, thick] plot[smooth, tension=.7] coordinates {(-30,18) (-30,21) (-28,24) (-26,25) (-22,25) (-19,25) (-16,25) (-14,24) (-12,21) (-9,21) (-8,25)};
\draw  (-21.4142,25.6401) rectangle (-19.7622,24.0232);
\draw  [red, thick] plot[smooth, tension=.7] coordinates {(-23,30) (-21,28) (-20.6104,25.625) (-20.5906,24.0179) (-21.1263,22.3711) (-21.8604,18.8196) (-19.2613,16.0022) (-12,17) (-6,21) (-7,26)};

\node at (-19.3,17) {$\Lambda_+$};
\node at (-28,25) {$\Lambda$};
\node at (-21,25) {$\mathfrak{c}$};

\node[below right] at (-21.6106,22.384) {$x_\mathfrak{c}$};
\fill (-21.1106,22.384) circle (3pt);

\node[below right] at (-5.7727,22.5821) {$y_\mathfrak{c}$};
\fill (-5.7727,22.5821) circle (3pt);

\node[below] at (-29.1811,22.5308) {$s$};
\fill (-29.1811,22.5608) circle (3pt);

\node[below] at (-12.6109,21.6361) {$z_1$};
\fill (-12.6109,21.6361) circle (3pt);

\node[above left] at (-8.3386,22.6524) {$z_2$};
\fill (-8.3386,22.6524) circle (3pt);

\node[above] at (-32,22) {$\gamma$};
\node[right] at (-22,29) {$\Lambda_-$};

\node at (-35,22) {(3)};

\end{scope} 
\end{tikzpicture}
\caption{Intersections of  $\Lambda_+$ and $\gamma$.}
\label{Figure:Cases}
\end{figure}

\ref{case_1})If $y_{\mathfrak{c}}$ belongs to the segment $\gamma_{sx}$ of $\gamma$ bounded by $s$ and $x$. Then by Lemma \ref{Lemma:Intersecting_intersecting_geodesic} applied to $\Lambda_1 = \Lambda$ and $\Lambda_2 = \Lambda^{\mathfrak{c}}$ we obtain that there are two common 2-cells of the carriers of $\Lambda$ and $\Lambda_{\mathfrak{c}}$, which is a contradiction with Lemma \ref{Lemma:Regular_Cell_Intersection_Once}.

\ref{case_2}) If points $x_{\mathfrak{c}}$ and $y_{\mathfrak{c}}$ lie in the different connected components of $\cay - \Lambda$ then there is an intersection of $\Lambda_+$ which contradicts the fact that $\Lambda$ and $\Lambda^{\mathfrak{c}}$ intersect only once ( Lemma \ref{Lemma:Regular_Cell_Intersection_Once}).

\ref{case_3}) If points $x_c$ and $y_c$ lie in the same connected component of $\cay - \Lambda$ then $\Lambda$ intersects $\gamma_{sx}$ at least twice in some points $z_1, z_2$.

If this two points do not lie in the boundary of the same 2-cell $\mathfrak{c}'$ in $\text{Car}(\Lambda_{\mathfrak{c}})$ then $\mathfrak{c}' \neq \mathfrak{c}$. By Lemma \ref{Lemma:Intersecting_intersecting_geodesic} applied for $\Lambda_1=\Lambda_{\mathfrak{c}}$ and  $\Lambda_2 = \Lambda$ we obtain that there are two different 2-cells in $\text{Car}(\Lambda) \cap \text{Car}(\Lambda_{\mathfrak{c}})$, which contradicts Lemma \ref{Lemma:Regular_Cell_Intersection_Once}. If $z_1$ and $z_2$ lie in the boundary of the same 2-cell $\mathfrak{c}'$ in $\text{Car}(\Lambda_{\mathfrak{c}})$ then we know, that $\mathfrak{c}' \neq \mathfrak{c}$, since $\mathfrak{c}$ shares at most one edge with $\gamma$. Hence, $\mathfrak{c}', \mathfrak{c} \in \text{Car}(\Lambda_{\mathfrak{c}}) \cap \text{Car}(\Lambda)$, which again contardicts \ref{Lemma:Regular_Cell_Intersection_Once}. Hence, we proved that $\Lambda_{+}$ does not intersect $\gamma$. 

Let us now consider $\Lambda_-$. Suppose, on the contrary, that $\Lambda_-$ intersects $\gamma$. Let $t_{\mathfrak{c}}$ be the point of the intersection of $\Lambda_-$ and $\gamma$, which is the nearest to the point $x_{\mathfrak{c}}$ in the edge-path metric on $\Lambda_-$ (see Figure \ref{Figure:Intersection_of_gamma_minus}). Note, that $t_{\mathfrak{c}}$ lies in the connected component of $\cay - \Lambda$ not containing $x_{\mathfrak{c}}$,  since otherwise there will be a second intersection of $\Lambda$ and $\Lambda_{\mathfrak{c}}$ which is not allowed, according to Lemma \ref{Lemma:Regular_Cell_Intersection_Once}.

\begin{figure}[h]
\centering
\begin{tikzpicture}[scale=0.4]

\draw (-41.0675,21.2264) -- (-39.0776,22.7458) -- (-36.0776,21.7458) -- (-33.9998,23.0964) -- (-32,22) -- (-30.2325,23.0414) -- (-28,22) -- (-26,23) -- (-25,22) -- (-23,23) -- (-22,22) -- (-20,23) -- (-19,22) -- (-17,22) -- (-16,23) -- (-14,22) -- (-13,21) -- (-12,23) -- (-11,22) ;
\draw [blue, thick] plot[smooth, tension=.7] coordinates {(-30,18) (-30,21) (-28,24) (-26,25) (-22,25) (-19,25) (-16,25) (-14,24) (-13,22) (-11,20) (-10,18)};
\draw  (-21.4142,25.6401) rectangle (-19.7622,24.0232);
\draw [red, thick]  plot[smooth, tension=.7] coordinates {(-23,30) (-21,28) (-20.6104,25.625) (-20.5906,24.0179) (-21.1263,22.3711) (-21.8604,18.8196)    };

\node[right] at (-21.1027,20.1078) {$\Lambda_+$};
\node at (-28,25) {$\Lambda$};
\node at (-21,25) {$\mathfrak{c}$};

\node[below right] at (-21.6106,22.384) {$x_\mathfrak{c}$};
\fill (-21.1106,22.384) circle (3pt);

\node[below] at (-29.1811,22.5308) {$s$};
\fill (-29.1811,22.5608) circle (3pt);

\node[right] at (-12.6109,21.6361) {$x$};
\fill (-12.6109,21.6361) circle (3pt);

\node[above] (v1) at (-32,22) {$\gamma$};
\node[ right] at (-27.89,28.2456) {$\Lambda_-$};

\draw[red, thick]  plot[smooth, tension=.7] coordinates {(-23,30) (-24.8426,30.6455) (-26.678,29.2409) (-29.3444,27.6569) (-32.4611,27.0336) (-35.4738,24.9212) (-37.2745,22.8435) (-38.0018,20.3848) (-38.2788,18.2378)};
\node[below right] at (-37.5169,22.218) {$t_\mathfrak{c}$};

\fill (-37.5169,22.2618) circle (3pt);

\end{tikzpicture}
 \caption{Intersection of $\Lambda_{-}$ and $\gamma$}
 \label{Figure:Intersection_of_gamma_minus}
\end{figure}
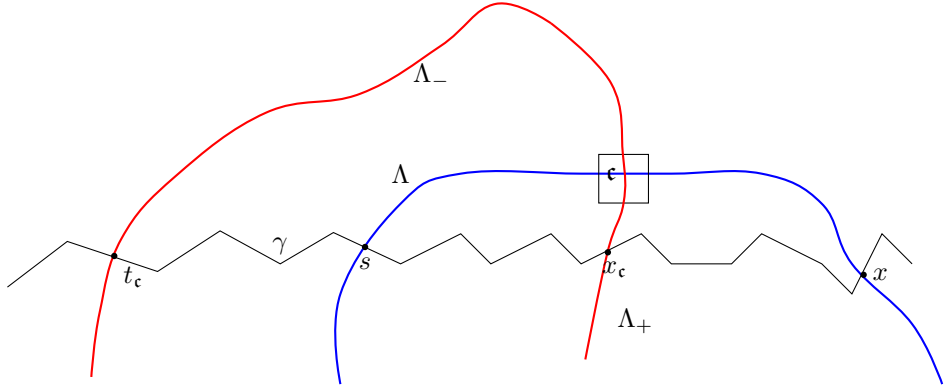

Let $\Lambda'_-$ be the hypergraph segment of $\Lambda_{\mathfrak{c}}$ bounded by the points $x_{\mathfrak{c}}$ and $t_{\mathfrak{c}}$. One of the points $\{x,s\}$  lies on $\gamma$ between points $x_{\mathfrak{c}}$ and $t_{\mathfrak{c}}$. Suppose first that this point is $s$.

Since $s$ lies on the geodesic segment, that intersects $\Lambda^{\mathfrak{c}}$ twice, by Lemma \ref{Lemma:Geodesic_breaks_away_from_carrier} we have two possibilities: $s$ lies on the boundary of the 2-cell $\mathfrak{c}$ (see Figure \ref{Figure:Two_ways_of_locations_of_x} a) ) or $s$ lies in the diagram presented in Figure \ref{Figure:Two_ways_of_locations_of_x} b).

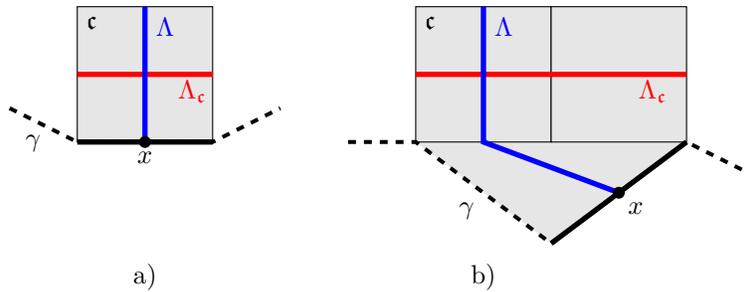
\begin{figure}[h]
\centering
\begin{tikzpicture}[scale=0.9]

\filldraw[draw=black,fill=gray!20] (-5,-1) rectangle (-3,1);

\path[red, draw,line width=2pt] (-5,0) -- (-3,0);
\draw (-3.3,-0.25) node { ${\color{red} \Lambda_{\mathfrak{c}}}$};

\path[blue, draw,line width=2pt] (-4,1) -- (-4,-1);
\draw (-3.7,0.7) node { ${\color{blue} \Lambda}$};

\draw[draw,line width=1.5pt, dashed] (-6,-0.5) -- (-5,-1) -- (-3,-1)--(-2,-0.5);

\path[black, draw,line width=2pt] (-5,-1) -- (-3,-1);

\draw (-5, 1) node[below right] {$\mathfrak{c}$};
\fill (-4,-1) circle (2.5pt) node[below] {$x$};
\draw (-5.4,-0.75) node[below left] {$\gamma$};
\draw (-4,-3) node {a)};


\fill[fill=gray!20] (0,-1)--(2,-2.5)--(4,-1)--cycle;
\filldraw[draw=black,fill=gray!20] (0,-1) rectangle (2,1);
\filldraw[draw=black,fill=gray!20] (2,-1) rectangle (4,1);

\path[red, draw,line width=2pt] (0,0) -- (4,0);
\draw (3.5,-0.25) node { ${\color{red} \Lambda_{\mathfrak{c}}}$};

\path[blue, draw,line width=2pt] (1,1) -- (1,-1) -- (3, -1.75);
\draw (1.3,0.7) node { ${\color{blue} \Lambda}$};

\draw[draw,line width=1.5pt, dashed] (-1,-1) -- (0,-1) -- (2,-2.5) -- (4,-1) -- (5,-1.5);

\path[black, draw,line width=2pt] (2,-2.5) -- (4,-1);

\draw (0, 1) node[below right] {$\mathfrak{c}$};
\fill (3,-1.75) circle (2.5pt) node[below right] {$x$};
\draw (1,-1.75) node[below left] {$\gamma$};
\draw (1,-3) node {b)};

\end{tikzpicture}
 \caption{Two possible locations of $x$}
 \label{Figure:Two_ways_of_locations_of_x}
\end{figure}

Note, that in both cases the segment of $\Lambda$ joining the point $s$ and the 2-cell $\mathfrak{c}$ consists of at most one 2-cell, which, if occurs, is a regular 2-cell. Remember that between $s$ and $x$ in $\text{Car}(\Lambda)$ was at least seven 2-cells and $\mathfrak{c}$ is 3-rd, 4-th or 5-th of them. Hence there must be a least two 2-cells between $\mathfrak{c}$ and $s$. This is a contradiction, so  $\Lambda_-$ does not intersect $\gamma$.

If $x$ lies on $\gamma$ between $x_{\mathfrak{c}}$ and $t_{\mathfrak{c}}$ the reasoning is analogous.   This ends the proof in Case 1.

\subsection{Case 2: There are at most six 2-cells in $\text{Car}(\Lambda_{sx})$}

Now let us consider the case, where there are at most six 2-cells in the carrier of $\Lambda_{sx}$. Let $\mathfrak{c}$ be any regular 2-cell in the carrier of $\Lambda_{sx}$ that lies between $\mathcal{d}_1$ and $\mathcal{d}_2$ (recall that $\mathcal{d}_1$ and $\mathcal{d}_2$ were two not sharing an edge distinguished 2-cells in the segment $\Lambda_{sx}$). Again, we denote by $\Lambda_{\mathfrak{c}}$ the hypergraph passing through $\mathfrak{c}$ and dual to $\Lambda_{sx}$. Analogously we state two assertions:

\begin{enumerate}[(a)]
\item $\Lambda^{\mathfrak{c}}$ intersects some edge of the set $E$. \label{Point2:a}
\item $\Lambda^{\mathfrak{c}}$ intersects $\gamma$ exactly once.
\label{Point2:b}
\end{enumerate}

To prove assertions (\ref{Point2:a}) and (\ref{Point2:b}) we first define the point $x_{\mathfrak{c}}$ as in Case 1.

Proof of assertion (\ref{Point2:a}) is analogous as in Case 1.

Now, consider assertion (\ref{Point2:b}). Again, we split $\Lambda^{\mathfrak{c}}$ into two connected components:

\begin{itemize}
\item $\Lambda_+$ which is the
 intersection of $\Lambda^{\mathfrak{c}}$ with the component of $\cay - \Lambda$ containing $x_{\mathfrak{c}}$
\item $\Lambda_-$ which is the intersection of $\Lambda^{\mathfrak{c}}$ with the component of $\cay - \Lambda$ not containing $x_{\mathfrak{c}}$.
\end{itemize}

To prove that $\Lambda_+$ intersects $\gamma$ only once we can repeat the argument from Case 1.

Consider now $\Lambda_-$. Suppose, on the contrary that $\Lambda_-$ intersects $\gamma$ in a point $t_{\mathfrak{c}}$. Note that $t_{\mathfrak{c}}$ lies in the connected component of $\cay - \Lambda$ not containing $x_{\mathfrak{c}}$, so the geodesic segment of $\gamma$ bounded by $x_\mathfrak{c}$ and $t_{\mathfrak{c}}$ contains $x$ or $s$. Without loss of generality we can assume that this geodesic segment contains $x$.

Let $\Lambda'_-$ be the hypergraph segment of $\Lambda_{\mathfrak{c}}$ bounded by the points $x_{\mathfrak{c}}$ and $t_{\mathfrak{c}}$. The same reasoning, as in previous case, shows that there is at most one 2-cell in $\text{Car}(\Lambda)$ between $\mathfrak{c}$ and $x$, which, if occurs, is a regular 2-cell. This is a contraction since $\mathfrak{c}$ lies in $\text{Car}(\Lambda)$ between two distinguished 2-cells, so there must be a distinguished 2-cell in $\text{Car}(\Lambda)$ between $\mathfrak{c}$ and $x$.

\end{proof}

Thorem \ref{Theorem:Proper_action} is an immediate consequence of Theorem \ref{Theorem:Many_good_hypergraphs}.

\begin{bibdiv}
\begin{biblist}

\bib{agol}{article}{   author={Agol, Ian},    title={The virtual Haken conjecture},    note={With an appendix by Agol, Daniel Groves, and Jason Manning},    journal={Doc. Math.},    volume={18},    date={2013},    pages={1045--1087},
}

\bib{kot}{article}{   author={Kotowski, Marcin},   author={Kotowski, Micha{\l}},   title={Random groups and property $(T)$: \.Zuk's theorem revisited},   journal={J. Lond. Math. Soc. (2)},   volume={88},   date={2013},   number={2},
   pages={396--416},
}

\bib{cmv}{article}{
   author={Cherix, Pierre-Alain},   author={Martin, Florian},   author={Valette, Alain},   title={Spaces with measured walls, the Haagerup property and property (T)},   journal={Ergodic Theory Dynam. Systems},   volume={24},   date={2004},   number={6},  pages={1895--1908},
}

\bib{walls}{article}{   author={Chatterji, Indira},   author={Niblo, Graham},   title={From wall spaces to $\rm CAT(0)$ cube complexes},   journal={Internat. J. Algebra Comput.},   volume={15},   date={2005},   number={5-6},   pages={875--885},
}

\bib{inv}{book}{ author={Ollivier, Yann}, title={A January 2005 invitation to random groups}, series={Ensaios Matem\'aticos [Mathematical Surveys]}, volume={10}, publisher={Sociedade Brasileira de Matem\'atica, Rio de Janeiro}, date={2005}, pages={ii+100},  }

\bib{some}{article}{ author={Ollivier, Yann}, title={Some small cancellation properties of random groups}, journal={Internat. J. Algebra Comput.}, volume={17}, date={2007}, number={1}, pages={37--51},
 }

 \bib{gro93}{article}{  author={Gromov, M.},   title={Asymptotic invariants of infinite groups},   conference={      title={Geometric group theory, Vol.\ 2},      address={Sussex},      date={1991},   },   book={      series={London Math. Soc. Lecture Note Ser.},      volume={182},      publisher={Cambridge Univ. Press},      place={Cambridge},   },   date={1993},   pages={1--295}}


  \bib{some}{article}{ author={Ollivier, Yann}, title={Some small cancellation properties of random groups}, journal={Internat. J. Algebra Comput.}, volume={17}, date={2007}, number={1}, pages={37--51}, issn={0218-1967}}

  \bib{shape}{article}{   author={Ollivier, Y.},   title={Sharp phase transition theorems for hyperbolicity of random   groups},   journal={Geom. Funct. Anal.},   volume={14},   date={2004},   number={3},   pages={595--679},   issn={1016-443X},}

     \bib{ow11}{article}{   author={Ollivier, Yann},   author={Wise, Daniel T.},   title={Cubulating random groups at density less than $1/6$},   journal={Trans. Amer. Math. Soc.},   volume={363},   date={2011},   number={9},   pages={4701--4733}}

  
  \bib{przyt}{article}{
   author={Mackay, John M.},
   author={Przytycki, Piotr},
   title={Balanced walls for random groups},
   journal={Michigan Math. J.},
   volume={64},
   date={2015},
   number={2},
   pages={397--419},
}

  \bib{odrz}{article}{ author={Odrzyg{\'o}{\'z}d{\'z}, Tomasz}, title={The square model for random groups}, journal={Colloq. Math.}, volume={142}, date={2016}}

    \bib{notka}{article}{ author={Odrzyg{\'o}{\'z}d{\'z}, Tomasz}, title={Nonplanar  isoperimetric  inequality  for  random
groups}, journal={Note, aviable at: http://students.mimuw.edu.pl/$\sim$to277393/web/files/nonplanar.pdf}, date={2014}}


\end{biblist}
\end{bibdiv}

\end{document}